\documentclass[11pt]{article}
\usepackage[T1]{fontenc}
\usepackage{lmodern}
\usepackage{amsmath,amssymb,amsthm,mathtools,mathrsfs}
\usepackage[margin=1.0in]{geometry}
\usepackage{booktabs,tabularx,array,longtable}
\usepackage{microtype}
\usepackage{xcolor}
\usepackage[colorlinks=true,linkcolor=blue!50!black,citecolor=blue!50!black,urlcolor=blue!50!black]{hyperref}
\allowdisplaybreaks
\hypersetup{
  pdftitle={Generic polar divisors and flag residues for root-system zeta functions},
  pdfauthor={Jonas Matuzas},
  pdfsubject={Generic polar divisors, residue functions, flag residues, specialization, and local pole orders for Komori-Matsumoto-Tsumura zeta functions},
  pdfkeywords={zeta function of a root system, multivariable Witten zeta function, singular locus, residue, Mordell-Tornheim zeta function, Euler-Zagier zeta function}}

\newtheorem{theorem}{Theorem}[section]
\newtheorem{proposition}[theorem]{Proposition}
\newtheorem{corollary}[theorem]{Corollary}
\newtheorem{lemma}[theorem]{Lemma}
\newtheorem{definition}[theorem]{Definition}
\newtheorem{remark}[theorem]{Remark}
\newtheorem{example}[theorem]{Example}

\newcommand{\N}{\mathbb N}
\newcommand{\Nzero}{\mathbb N_0}
\newcommand{\C}{\mathbb C}
\newcommand{\Res}{\operatorname*{Res}}
\newcommand{\supp}{\operatorname{supp}}
\newcommand{\FP}{\operatorname*{FP}}
\newcommand{\dd}{\,\mathrm d}
\newcommand{\RF}[2]{(#1)_{#2}}
\newcommand{\cR}{\mathcal R}
\newcommand{\cP}{\mathcal P}
\newcommand{\cW}{\mathcal W}
\newcommand{\cA}{\mathcal A}
\newcommand{\bfone}{\mathbf 1}
\newcommand{\EZ}{\zeta_{\mathrm{EZ}}}
\newcommand{\FD}{F_D}
\newcommand{\Zero}{\mathcal Z}

\title{Generic polar divisors and flag residues\\
for root-system zeta functions}
\author{Jonas Matuzas\\
\href{mailto:jonas.matuzas@gmail.com}{\texttt{jonas.matuzas@gmail.com}}}
\date{}

\begin{document}
\maketitle

\begin{abstract}
Let \(\Phi\) be an irreducible crystallographic root system, and let
\(Z_\Phi(\mathbf s)\) denote the untwisted Komori--Matsumoto--Tsumura
zeta function with one exponent for each positive coroot.  For a
nonempty set \(S\) of simple nodes, write
\[
 H_{S,\ell}:\qquad
 \sum_{\alpha\in R(S)}s_\alpha=|S|-\ell .
\]
We prove that every proper-support hyperplane \(H_{S,\ell}\) is a
genuine polar divisor at a generic point, whereas exact homogeneity
leaves only the unshifted full-support divisor.  The residue on
\(H_{S,\ell}\) is expressed as a finite Taylor-jet sum of reduced
projective periods and polynomially weighted complementary
root-system zeta functions.

On the maximal support wonderful model, boundary terms are indexed by
strict decorated flags.  We derive recursive flag residues,
component-mass gamma factors, and an incidence-complete formula for
the Laurent coefficients on any transverse affine slice.  In
particular, the pole order is determined by the first nonzero
aggregate coefficient, not by the largest order of an individual
flag.

The general formulas recover the classical \(A_2\) and \(A_3\)
singular data, Zhao's Euler--Zagier residues, and the rank-two
\(C_2\) and \(G_2\) residue functions.  For \(B_3\) and \(C_3\) we
derive the carrier geometry and the lower-rank factorizations of the
positive residues, identify the three-term cancellation at
\(s=1/8\), and show that negative half-integers are the only possible
locations of double poles.  The known \(B_3\) double coefficient at
\(-1/2\) is recovered in the flag normalization.
\end{abstract}

\noindent\textbf{2020 Mathematics Subject Classification.}
Primary 11M32, 11M41; Secondary 17B20, 32A20, 40B05.

\noindent\textbf{Keywords.}
root-system zeta function; multivariable Witten zeta function; polar
divisor; residue function; specialization; Euler--Zagier zeta function;
Mordell--Tornheim zeta function.

\section{Introduction}

Komori, Matsumoto and Tsumura associate with a crystallographic root
system a multivariable Dirichlet series having one complex exponent
for each positive root \cite{KMT2010PLMS,KMTBook2023}.  We use the
equivalent dual convention in which the exponents are indexed by
positive coroots.  Its diagonal specialization is the normalized
single-variable Witten zeta function.  General continuation theorems
locate possible polar hyperplanes, but they do not by themselves
decide which candidates are genuine, what their residues are, or how
several incident divisors combine after specialization.

These questions are already nontrivial in low rank.  The
Mordell--Tornheim function gives the \(A_2\) case.  Matsumoto and
Tsumura determined the six-variable \(A_3\) singular locus
\cite{MatsumotoTsumura2006}, and Komori, Matsumoto and Tsumura treated
the relevant \(C_2\) and \(G_2\) families
\cite{KMT2010II,KMT2011G2}.  Zhao computed the residues of
Euler--Zagier multiple zeta functions in arbitrary depth
\cite{Zhao2000}, while Akiyama, Egami and Tanigawa determined their
surviving polar hyperplanes \cite{AET2001}.  Au obtained an independent
single-variable analysis in ranks two and three, including the
positive residues and the analysis of possible multiple poles in
types \(B_3\) and \(C_3\) \cite{Au2025}.

The first result of the present paper determines the generic polar
divisor in every irreducible crystallographic type.  If \(S\) is a
proper nonempty support and \(\ell\geq0\), then
\[
 \sum_{\alpha\in R(S)}s_\alpha=|S|-\ell
\]
is genuinely polar at a generic point.  Its residue is a finite sum
indexed by normal Taylor degrees; each term factors into a
projective period and a polynomially weighted zeta function of the
complementary subsystem.  For full support, exact homogeneity produces
only the unshifted radial hyperplane.

The second result describes intersections.  On the maximal support
wonderful model, the canonical boundary terms are indexed by strict
decorated flags.  Successive residues are encoded by relative face
operators.  When a relative layer is disconnected, integration over
the component masses introduces a quotient of gamma functions and may
lower the expected order.  On a transverse affine curve
\(\gamma(t)\), the coefficient of \(t^{-m}\) is the sum of the
appropriate derivatives of \emph{all} incident flag coefficients.
Consequently, different flags of the same candidate order may cancel;
the actual pole order is the largest \(m\) for which this aggregate is
nonzero.  We call this complete sum the aggregate Laurent coefficient.
A formula is incidence-complete if it includes every incident flag
contribution before that coefficient is tested for vanishing.

The proof uses a Schwinger--Mellin representation followed by a
wonderful-model resolution of the support subspaces.  The normalized
Mellin distributions are entire across ordinary coordinate faces, so
all parameter poles come from the Todd-denominator arrangement.  A
homogeneous interior-sector argument separates the unique
full-support radial pole.  At a proper face, Taylor expansion of the
crossing forms gives the explicit residue formula.  Iterating the same
construction along nested supports yields the flag calculus.

The general theory is then tested in three directions.  First, it
recovers the complete \(A_2\) and \(A_3\) residue structure, including
codimension-two and length-three flag coefficients.  Second, the
Euler--Zagier specialization reproduces Zhao's residue formulas and
explains the familiar thinning of candidate hyperplanes.  Third, the
\(B_3\) and \(C_3\) support ledgers lead to exact lower-rank
factorizations of the positive diagonal residues and to the
classification of possible double-pole locations.

The continuation and candidate-geometry background is due to
Lichtin, Essouabri, and subsequent work on multivariable Dirichlet
series \cite{Lichtin1988,Essouabri1997,BEL2007}.  Related approaches
include conical Laurent expansions \cite{GPZ2014,GPZ2020},
Newton-polytope descriptions of Shintani zeta functions
\cite{Lopez2022}, and explicit continuation algorithms
\cite{Rutard2023}.  Budur, Shi and Zuo recently determined the polar
loci and polar-order data of multivariable Archimedean local zeta
integrals attached to polynomial maps \cite{BudurShiZuo2025}.  Their
continuous local-integral setting is different from the discrete KMT
lattice series considered here, but it is a close comparison for the
geometric polar-locus problem.  The contribution here is the
combination of generic nonvanishing, explicit residue functions,
canonical flag coefficients, gamma defects, and aggregate
specialization formulas in the root-system setting.

Section~2 fixes the KMT convention and the carrier arrangement.
Section~3 proves the generic divisor and residue theorems.
Section~4 treats zero loci and specialization.  Section~5 develops
flag residues and local pole orders.  Sections~6--9 give the
\(A_2\), \(A_3\), Euler--Zagier, rank-two, and \(B_3/C_3\)
applications.  Section~10 discusses diagonal specialization.
\section{The KMT function and the carrier arrangement}

Let $\Psi=\Phi^\vee$ and choose positive coroots $\Psi^+$ and simple
coroots $\beta_1,\ldots,\beta_r$.  Write
\[
 \alpha=\sum_{i=1}^r b_i(\alpha)\beta_i,
 \qquad b_i(\alpha)\in\Nzero,
\]
and put
\[
 L_\alpha(\mathbf m)=\sum_{i=1}^r b_i(\alpha)m_i.
\]
We fix the untwisted, strongly dominant KMT variant
\begin{equation}\label{eq:KMT}
 Z_\Phi(\mathbf s)
 =\sum_{\mathbf m\in\N^r}
   \prod_{\alpha\in\Psi^+}L_\alpha(\mathbf m)^{-s_\alpha}.
\end{equation}
This is the $\mathbf y=0$ specialization of the standard KMT function.
If every $s_\alpha=s$, then \eqref{eq:KMT} is the normalized Witten
zeta function; the ordinary Witten function differs by the Weyl
normalizing factor raised to $s$.

For $\varnothing\ne S\subseteq I:=\{1,\ldots,r\}$ define
\[
 R(S)=\{\alpha\in\Psi^+:\supp(\alpha)\cap S\ne\varnothing\},
 \qquad N(S)=|R(S)|.
\]
For $\ell\in\Nzero$ put
\begin{equation}\label{eq:Lambda}
 \Lambda_{S,\ell}(\mathbf s)
 =\sum_{\alpha\in R(S)}s_\alpha-|S|+\ell,
 \qquad H_{S,\ell}=\{\Lambda_{S,\ell}=0\}.
\end{equation}
We call the affine functions just defined carrier forms and their zero
sets carrier hyperplanes.  The locally finite affine arrangement
\[
 \cA_\Phi=\bigcup_{\varnothing\ne S\subseteq I}
            \bigcup_{\ell\geq0}H_{S,\ell}
\]
is the decorated carrier arrangement.  Throughout,
\(\Res_{\Lambda=0}\) means the coefficient of \(\Lambda^{-1}\) when
\(\Lambda\) itself is used as the transverse coordinate; this convention
removes the usual scaling ambiguity in multivariable residues.

For later use split the positive coroots into
\begin{align*}
 \Psi^+_{S^c}&=\{\alpha:\supp(\alpha)\subseteq S^c\},\\
 C(S)&=\{\alpha:\supp(\alpha)\cap S\ne\varnothing,
                   \ \supp(\alpha)\cap S^c\ne\varnothing\}.
\end{align*}
Thus $R(S)$ consists of the roots supported in $S$ together with the
crossing roots $C(S)$.

\paragraph{Core notation.}
The following symbols are fixed throughout; this table also distinguishes
geometric supports from the later analytic coefficients.
\begin{center}
\small
\begin{tabularx}{\textwidth}{@{}l X@{}}
\toprule
symbol & meaning\\
\midrule
\(R(S),N(S)\) & positive coroots meeting \(S\), and their number\\
\(C(S)\) & coroots meeting both \(S\) and \(S^c\)\\
\(\Lambda_{S,\ell},H_{S,\ell}\) & carrier form and its affine hyperplane\\
\(\cP_{S,\mathbf k}\) & canonically reduced projective face period\\
\(\cW_{S,\mathbf k}\) & polynomially weighted complementary KMT series\\
\(C_{\mathfrak F}\) & aggregate holomorphic coefficient of a decorated flag\\
\(\cA_m(\gamma)\) & aggregate coefficient of order \(m\) on a transverse curve\\
\bottomrule
\end{tabularx}
\end{center}

The exact Schwinger representation, initially in an absolute-convergence
region, is
\begin{equation}\label{eq:Schwinger}
 Z_\Phi(\mathbf s)
 =\frac1{\prod_{\alpha}\Gamma(s_\alpha)}
 \int_{(0,\infty)^{|\Psi^+|}}
 \left(\prod_\alpha t_\alpha^{s_\alpha-1}\right)
 \prod_{i=1}^r\frac{1}{e^{A_i(\mathbf t)}-1}\,\dd\mathbf t,
\end{equation}
where $A_i(\mathbf t)=\sum_\alpha b_i(\alpha)t_\alpha$.

\begin{lemma}[Holomorphy across coordinate faces]
\label{lem:normalized-mellin}
For each \(\alpha\), the distribution
\[
 \frac{t_{\alpha,+}^{s_\alpha-1}}{\Gamma(s_\alpha)}
\]
extends entire in \(s_\alpha\).  Consequently, a coordinate boundary
\(t_\alpha=0\) at which every Todd denominator
\((e^{A_i(\mathbf t)}-1)^{-1}\) is smooth contributes no parameter pole.
All carrier poles in \eqref{eq:Schwinger} therefore come from support
subspaces on which one or more of the linear forms \(A_i\) vanish.
\end{lemma}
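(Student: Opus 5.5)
The first claim is the classical fact that \(t_+^{\lambda-1}/\Gamma(\lambda)\) is an entire family of distributions on \(\mathbb R\); I will prove it directly rather than cite it. For a test function \(\varphi\in C_c^\infty(\mathbb R)\) and \(\operatorname{Re}\lambda>0\), put
\[
 \langle t_+^{\lambda-1},\varphi\rangle=\int_0^\infty t^{\lambda-1}\varphi(t)\dd t .
\]
Integrating by parts \(n\) times gives
\[
 \langle t_+^{\lambda-1},\varphi\rangle
 =\frac{(-1)^n}{\lambda(\lambda+1)\cdots(\lambda+n-1)}
 \int_0^\infty t^{\lambda+n-1}\varphi^{(n)}(t)\dd t .
\]
The right side is holomorphic for \(\operatorname{Re}\lambda>-n\) away from \(\lambda\in\{0,-1,\dots,-n+1\}\). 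Dividing by \(\Gamma(\lambda)=\Gamma(\lambda+n)/(\lambda)_n\) cancels the rising factorial exactly. The result is
\[
 \frac{(-1)^n}{\Gamma(\lambda+n)}\int_0^\infty t^{\lambda+n-1}\varphi^{(n)}(t)\dd t ,
\]
which is holomorphic on \(\operatorname{Re}\lambda>-n\). Since \(n\) is arbitrary, this gives an entire extension, continuous in \(\varphi\) in the \(C^n\) topology on compact sets. At \(\lambda=-k\) it equals \(\delta^{(k)}\), which serves as a consistency check.

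For the consequence, I localize the integrand of \eqref{eq:Schwinger} with a smooth partition of unity on \([0,\infty)^{|\Psi^+|}\). Near infinity the factors \((e^{A_i}-1)^{-1}\) decay exponentially wherever every \(A_i\) is bounded below. At infinity the only delicate region is where some \(A_i\) stays small while other coordinates grow. Since each \(t_\alpha\) enters some \(A_i\) with positive coefficient, every \(t_\alpha\to\infty\) forces some \(A_i\to\infty\). I will therefore handle infinity by the standard split
\[
 (e^{A}-1)^{-1}=e^{-A}(1-e^{-A})^{-1},
\]
which leaves only finite-distance issues.

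Now take a point \(\mathbf t_0\) on a coordinate face \(\{t_\alpha=0,\ \alpha\in T\}\) at which no \(A_i\) vanishes. In a neighbourhood of \(\mathbf t_0\), the function \(F(\mathbf t)=\prod_i(e^{A_i(\mathbf t)}-1)^{-1}\) is smooth. The localized contribution is then the pairing of the tensor product distribution \(\prod_{\alpha}t_{\alpha,+}^{s_\alpha-1}/\Gamma(s_\alpha)\) with the compactly supported smooth function \(\chi F\). Here the coordinates not in \(T\) are bounded away from zero, so their factors are smooth. By the first part and separate holomorphy plus local uniform bounds (Hartogs, or Osgood for continuous separately holomorphic functions), this pairing is entire in \(\mathbf s\). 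Hence no parameter pole arises there.

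The remaining patches are neighbourhoods of points where some \(A_i=0\). Because \(A_i\geq0\) on the orthant and all coefficients \(b_i\) are nonnegative, \(A_i=0\) there exactly on the coordinate subspace \(\{t_\alpha=0:\ b_i(\alpha)>0\}\). These are the support subspaces. It follows that all poles are localized at them.

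The main obstacle is rigor in the globalization step: controlling uniformity at infinity and making sure the partition-of-unity pieces near infinity do not secretly touch the vanishing loci of \(A_i\). I will first try to dominate \(F\) by \(\prod_i e^{-A_i/2}A_i^{-1}\). That bound reduces every patch away from the support subspaces to a Schwartz-type test function, to which the entire family applies.
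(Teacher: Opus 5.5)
Your proposal is correct and follows essentially the same route as the paper: the $n$-fold integration by parts followed by division by $\Gamma(\lambda)=\Gamma(\lambda+n)/(\lambda)_n$ is the paired form of the paper's distributional identity $t_+^{s-1}/\Gamma(s)=\partial_t^m\bigl(t_+^{s+m-1}/\Gamma(s+m)\bigr)$, and both arguments then tensor over the coordinates and multiply by the locally smooth Todd factor. Your handling of Schwinger infinity, via $(e^x-1)^{-1}\le e^{-x/2}x^{-1}$ and a partition of unity that isolates neighbourhoods of the support subspaces, is more explicit than the paper's one-line appeal to uniform exponential decay. It is, however, the same idea, and it matches what the paper later does in its Schwinger-tail lemma.
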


\begin{proof}
For \(\Re s>0\), pair the displayed distribution with a compactly
supported smooth test function.  For every integer \(m\geq0\),
\[
 \frac{t_+^{s-1}}{\Gamma(s)}
 =\partial_t^m\!\left(\frac{t_+^{s+m-1}}{\Gamma(s+m)}\right)
\]
as distributions.  Choosing \(m\) so that \(\Re(s+m)>0\) gives an
entire continuation, and the identity is compatible on overlaps.  Tensor
products give the corresponding statement for several coordinate
variables.  Multiplication by a smooth Todd factor preserves
holomorphy; at Schwinger infinity the exponential decay used in
\eqref{eq:Schwinger} is uniform on compact parameter sets.
\end{proof}

\begin{lemma}[Support-subspace lattice]
\label{lem:support-lattice}
On the closed positive Schwinger orthant, put
\[
 E_S=\{\mathbf t:t_\alpha=0\text{ for every }\alpha\in R(S)\}.
\]
Then
\[
 E_S=\{\mathbf t:A_i(\mathbf t)=0\text{ for every }i\in S\},
 \qquad E_S\cap E_T=E_{S\cup T}.
\]
The map \(S\mapsto E_S\) is injective.  Hence the boundary support
lattice is the Boolean lattice of nonempty simple-node supports.
\end{lemma}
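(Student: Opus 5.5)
The plan is to argue directly from the nonnegativity of the coefficients \(b_i(\alpha)\in\Nzero\), working on the closed positive orthant \(t_\alpha\ge0\).

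\emph{First equality.} Fix \(\mathbf t\) with all \(t_\alpha\ge0\). Each \(A_i(\mathbf t)=\sum_\alpha b_i(\alpha)t_\alpha\) is a sum of nonnegative terms. Hence \(A_i(\mathbf t)=0\) exactly when \(t_\alpha=0\) for every \(\alpha\) with \(b_i(\alpha)>0\), that is, for every \(\alpha\) with \(i\in\supp(\alpha)\). Taking the conjunction over \(i\in S\), the condition \(A_i(\mathbf t)=0\) for all \(i\in S\) becomes \(t_\alpha=0\) for every \(\alpha\) whose support meets \(S\). By definition this is the set of \(\alpha\in R(S)\), which gives the first equality. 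Positivity is essential here, since off the orthant the two sets differ.

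\emph{Intersection formula.} Directly from the definition of \(R\) we have \(R(S)\cup R(T)=R(S\cup T)\), because a support meets \(S\cup T\) if and only if it meets \(S\) or meets \(T\). Since \(E_S\cap E_T\) is cut out by the vanishing of \(t_\alpha\) for \(\alpha\in R(S)\cup R(T)\), it equals \(E_{S\cup T}\).

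\emph{Injectivity.} The sets \(E_S\) are coordinate faces of the orthant, so \(E_S\) determines \(R(S)\) as the set of coordinates vanishing identically on it. It therefore suffices to show that \(R(S)\) determines \(S\). The simple coroot \(\beta_i\) lies in \(\Psi^+\) and has support \(\{i\}\), so \(\beta_i\in R(S)\) exactly when \(i\in S\). Thus
\[
S=\{i:\beta_i\in R(S)\},
\]
and \(S\) is recovered from \(R(S)\).

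\emph{Boolean lattice.} Combining these steps, \(S\mapsto E_S\) is an injective, order-reversing map that sends unions to intersections. Its image, ordered by reverse inclusion, is therefore isomorphic to the Boolean lattice of nonempty subsets of \(I\).

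No step is a serious obstacle. The only point needing care is the first equality: it must be stated on the closed orthant, where the positivity argument applies, and not on the whole linear space.
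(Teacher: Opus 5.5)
Your proof is correct and follows the paper's route. Nonnegativity of the \(b_i(\alpha)\) and of the Schwinger variables gives the first equality, \(R(S)\cup R(T)=R(S\cup T)\) gives the intersection identity, and the simple coroots \(\beta_i\) recover \(S\) from \(R(S)\), with your remark that \(E_S\) determines \(R(S)\) as the set of coordinates vanishing identically on it making explicit a step the paper leaves implicit.
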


\begin{proof}
All coefficients \(b_i(\alpha)\) and all Schwinger variables are
nonnegative.  Thus \(A_i(\mathbf t)=0\) holds exactly when
\(t_\alpha=0\) for every root with \(b_i(\alpha)>0\).  Imposing this for
all \(i\in S\) gives precisely \(R(S)\).  The intersection identity
follows from \(R(S)\cup R(T)=R(S\cup T)\).  Finally, the simple coroot
\(\beta_i\) belongs to \(R(S)\) exactly when \(i\in S\), so distinct
supports give distinct subspaces.
\end{proof}

After Lemma~\ref{lem:normalized-mellin}, only the support subspaces on
which Todd denominators vanish need to be resolved.  Their resolution
gives the same divisors as the facewise calculation below.  Wonderful
models make these divisors normal crossing
\cite{DeConciniProcesi1995,Li2009}.  Bare meromorphic continuation also
follows from the established KMT and general polynomial-Dirichlet
theories \cite{Essouabri1997,KMT2010PLMS}.

\begin{lemma}[Orders in a maximal-support chart]
\label{lem:chart-orders}
Let
\[
 S_1\subsetneq\cdots\subsetneq S_k
\]
be a strict support flag, and let
\(D_{S_1},\ldots,D_{S_k}\) be the corresponding boundary divisors in
the maximal wonderful model of the support-subspace arrangement.
After localizing away from ordinary coordinate faces, near the relative
interior of their intersection there are boundary coordinates \(\rho_1,\ldots,\rho_k\), remaining coordinates
\(\mathbf u\), and positive smooth units such that
\begin{align}
 \pi^*t_\alpha
 &=u_\alpha(\boldsymbol\rho,\mathbf u)
   \prod_{j:\,\alpha\in R(S_j)}\rho_j,
 \label{eq:chart-t}\\
 \left|\pi^*(\dd\mathbf t)\right|
 &=u_J(\boldsymbol\rho,\mathbf u)
   \prod_{j=1}^k\rho_j^{N(S_j)-1}
   \dd\boldsymbol\rho\,\dd\mathbf u,
 \label{eq:chart-jac}\\
 \pi^*A_i
 &=v_i(\boldsymbol\rho,\mathbf u)
   \prod_{j:\,i\in S_j}\rho_j.
 \label{eq:chart-A}
\end{align}
All displayed units are nonzero on the relative interior.  Therefore,
in an initial absolute-convergence region, the pullback of the
Schwinger density has the form
\begin{equation}
 \left(\prod_{j=1}^k
 \rho_j^{\Lambda_{S_j,0}(\mathbf s)-1}\right)
 a_{\mathfrak F}(\boldsymbol\rho,\mathbf u,\mathbf s)
 \dd\boldsymbol\rho\,\dd\mathbf u,
 \label{eq:chart-density}
\end{equation}
where \(a_{\mathfrak F}\) is smooth in the resolved spatial variables
and holomorphic in \(\mathbf s\).  A Taylor monomial
\(\rho_1^{\ell_1}\cdots\rho_k^{\ell_k}\) replaces
\(\Lambda_{S_j,0}\) by \(\Lambda_{S_j,\ell_j}\).
\end{lemma}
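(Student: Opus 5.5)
The plan is to use the standard local description of the De Concini--Procesi wonderful model for a building set containing all support subspaces \(E_S\); the maximal building set is the full intersection lattice, which by Lemma~\ref{lem:support-lattice} is the Boolean lattice of supports. A point in the relative interior of \(D_{S_1}\cap\cdots\cap D_{S_k}\) corresponds to a nested chain \(S_1\subsetneq\cdots\subsetneq S_k\). Since the \(E_S\) are coordinate subspaces, \(E_{S_1}\supsetneq\cdots\supsetneq E_{S_k}\), with codimensions \(N(S_1)<\cdots<N(S_k)\). I would therefore work with the nested blow-up of coordinate subspaces in a real-oriented (polar) form, which is adapted to the positive orthant.

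\textbf{Step 1: coordinates.} Set \(G_j=R(S_j)\setminus R(S_{j-1})\), with \(R(S_0)=\varnothing\). In each layer, pick a distinguished root \(\alpha_j\in G_j\) (nonempty by strictness). Then set \(\rho_j=t_{\alpha_j}/t_{\alpha_{j+1}}\) for \(j<k\) and \(\rho_k=t_{\alpha_k}\). Equivalently, define the radial variables \(\rho_j\) via sums \(\sum_{\alpha\in G_j}t_\alpha\) normalized layer by layer, which gives units that are manifestly positive. The remaining coordinates are the ratios \(u_\alpha=t_\alpha/t_{\alpha_j}\) for \(\alpha\in G_j\setminus\{\alpha_j\}\), together with \(t_\alpha\) for \(\alpha\notin R(S_k)\). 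This gives \(t_\alpha=u_\alpha\prod_{i\ge j}\rho_i\) for \(\alpha\in G_j\), and \(\alpha\in R(S_i)\) exactly when \(i\ge j\), which is \eqref{eq:chart-t}. The relative interior means the \(u_\alpha\) stay positive and bounded; localizing away from ordinary coordinate faces is exactly what guarantees this.

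\textbf{Step 2: Jacobian.} Count the powers of \(\rho_i\). Each of the \(|G_j|\) variables with \(j\le i\) contributes one factor, giving \(N(S_i)\) in total. Replacing \(\dd t_{\alpha_j}\) by \(\dd\rho\) removes one power per layer, so the exponent of \(\rho_i\) is \(N(S_i)-1\), which is \eqref{eq:chart-jac}. Only a positive unit from the ratio normalization remains.

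\textbf{Step 3: the forms \(A_i\).} Write \(A_i=\sum_\alpha b_i(\alpha)t_\alpha\). If \(i\in S_j\) for a minimal \(j\), then every \(\alpha\) with \(b_i(\alpha)>0\) lies in \(R(S_j)\), so each term carries \(\prod_{m\ge j}\rho_m\). This product equals \(\prod_{m:\,i\in S_m}\rho_m\), because nestedness makes \(i\in S_m\) exactly when \(m\ge j\).

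The cofactor \(v_i\) is a sum of nonnegative terms. It contains \(u_{\beta_i}\) times extra \(\rho\)'s only when \(\beta_i\) lies in a deeper layer. This is the main obstacle: positivity of \(v_i\) at \(\boldsymbol\rho=0\) needs some root \(\alpha\) with \(b_i(\alpha)>0\) in the shallowest layer \(G_j\). The simple coroot \(\beta_i\) itself works, since \(\supp\beta_i=\{i\}\subseteq S_j\setminus S_{j-1}\), so \(\beta_i\in G_j\). Hence \(v_i\ge b_i(\beta_i)u_{\beta_i}>0\) (or \(\ge 1\) if \(\beta_i=\alpha_j\)). For \(i\notin S_k\), \(v_i=A_i\) in the \(u\)-variables, and it is positive away from \(E_{\{i\}}\), by the localization and Lemma~\ref{lem:support-lattice}.

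\textbf{Step 4: density.} Substitute into \eqref{eq:Schwinger}. The factor \(\prod t_\alpha^{s_\alpha-1}\) gives \(\rho_j\) to the power \(\sum_{R(S_j)}(s_\alpha-1)\). The Jacobian adds \(N(S_j)-1\). Each Todd factor with \(i\in S_j\) behaves like \((\pi^*A_i)^{-1}\) times a smooth function, since \(x/(e^x-1)\) is smooth and positive; this contributes \(\rho_j^{-1}\) for each \(i\in S_j\). The total exponent is \(\sum_{R(S_j)}s_\alpha-|S_j|-1=\Lambda_{S_j,0}-1\).

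Everything else gets absorbed into \(a_{\mathfrak F}\). This includes the powers \(u_\alpha^{s_\alpha-1}\), which are smooth because the \(u_\alpha\) are bounded away from \(0\), the units \(v_i^{-1}\), the products \(\pi^*A_i\,/(e^{\pi^*A_i}-1)\), and \(1/\prod\Gamma(s_\alpha)\). The result is smooth in the resolved variables and holomorphic in \(\mathbf s\), which gives \eqref{eq:chart-density}.

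The final sentence of the lemma is immediate: multiplying by \(\rho_j^{\ell_j}\) shifts the exponent to \(\Lambda_{S_j,0}+\ell_j-1=\Lambda_{S_j,\ell_j}-1\).
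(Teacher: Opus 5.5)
Your proposal is correct and takes essentially the same route as the paper: order one of \(t_\alpha\) along \(D_S\) exactly when \(\alpha\in R(S)\), Jacobian discrepancy \(N(S)-1\), the simple-coroot term \(t_{\beta_i}\) together with positivity to fix the exact order of \(A_i\), and the factorization \(1/(e^z-1)=z^{-1}\cdot z/(e^z-1)\) for the exponent count. The only difference is that you work in an explicit De Concini--Procesi chain chart rather than with divisorial valuations, and your bound \(v_i\ge u_{\beta_i}>0\) makes explicit why the cofactor of \(\pi^*A_i\) is a unit, a step the paper compresses into ``positivity prevents cancellation.''
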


\begin{proof}
For the coordinate-subspace arrangement, the divisorial valuation of
\(t_\alpha\) along \(D_S\) is one precisely when
\(\alpha\in R(S)\), and is zero otherwise.  This gives
\eqref{eq:chart-t}.  Since \(E_S\) has codimension \(N(S)\), the
Jacobian discrepancy of its blow-up is \(N(S)-1\); iteration along a
nested flag gives \eqref{eq:chart-jac}.  Finally, all coefficients in
\(A_i=\sum_\alpha b_i(\alpha)t_\alpha\) are nonnegative.  Along
\(D_S\), every summand has positive order if \(i\in S\), and the
simple-coroot term \(t_{\beta_i}\) has order exactly one.  If
\(i\notin S\), that same term has order zero.  Positivity prevents
cancellation, proving \eqref{eq:chart-A}.

Write
\[
 \frac1{e^z-1}=z^{-1}\frac{z}{e^z-1}.
\]
The second factor is analytic at zero.  Hence the exponent of
\(\rho_j\) in the pullback of \eqref{eq:Schwinger} is
\[
 \sum_{\alpha\in R(S_j)}(s_\alpha-1)
 +(N(S_j)-1)-|S_j|
 =\Lambda_{S_j,0}(\mathbf s)-1.
\]
The normalized gamma factors are entire, and all remaining factors
are absorbed into the smooth holomorphic amplitude.  Taylor monomials
supply the stated nonnegative integer shifts.
\end{proof}

\begin{lemma}[Uniform normal-crossing Mellin continuation]
\label{lem:uniform-mellin}
Let \(K\subset\C^d\) be compact, let
\(\lambda_1,\ldots,\lambda_k\) be affine-linear functions, and let
\(a(\boldsymbol\rho,\mathbf u,\mathbf s)\) be smooth for
\(\boldsymbol\rho\in[0,1]^k\) and holomorphic in a neighborhood of
\(K\).  Assume either that it is compactly supported in
\(\mathbf u\), or that every mixed spatial and parameter derivative
used below is uniformly \(L^1\) in \(\mathbf u\) on compact parameter
sets.  Initially where
all \(\Re\lambda_j>0\), put
\[
 I(\mathbf s)=\int_{[0,1]^k\times U}
 \left(\prod_{j=1}^k\rho_j^{\lambda_j(\mathbf s)-1}\right)
 a(\boldsymbol\rho,\mathbf u,\mathbf s)
 \dd\boldsymbol\rho\,\dd\mathbf u.
\]
Then, in a neighborhood of \(K\), \(I\) is a finite sum of terms
\[
 h_{J,\mathbf n}(\mathbf s)
 \prod_{j\in J}\frac1{\lambda_j(\mathbf s)+n_j}
\]
plus a holomorphic function, where
\(J\subseteq\{1,\ldots,k\}\), \(n_j\in\Nzero\), and all
\(h_{J,\mathbf n}\) are holomorphic.  Only finitely many \(n_j\)
occur on a fixed compact parameter set.  Every parameter derivative of
\(I\) admits an expansion of the same shape in which the denominators are
finite products
\(\prod_{j\in J}(\lambda_j(\mathbf s)+n_j)^{-m_j}\) with integers
\(m_j\ge1\), again with holomorphic numerator functions; on a fixed
compact parameter set only finitely many exponents occur.  Differentiation
may thus raise the order of the explicit affine denominators but introduces
no new polar loci.
\end{lemma}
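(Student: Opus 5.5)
The plan is the standard one-variable Taylor subtraction, applied coordinate by coordinate. Fix a compact \(K\) and choose an integer \(M\) with \(\Re\lambda_j(\mathbf s)+M+1>0\) for all \(j\) and all \(\mathbf s\) in a neighborhood of \(K\); \(M\) is finite because the \(\lambda_j\) are affine and \(K\) is compact. First treat \(k=1\). Write
\[
 a(\rho,\mathbf u,\mathbf s)=\sum_{n=0}^{M}\frac{\rho^n}{n!}\partial_\rho^n a(0,\mathbf u,\mathbf s)+\rho^{M+1}r_M(\rho,\mathbf u,\mathbf s),
\]
where \(r_M\) is smooth in \(\rho\), holomorphic in \(\mathbf s\), and inherits the compact support or uniform \(L^1\) bounds in \(\mathbf u\) from the hypotheses on the derivatives \(\partial_\rho^n a\). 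Integrating the polynomial part over \([0,1]\) gives
\[
 \sum_{n\le M}\frac{1}{\lambda(\mathbf s)+n}\cdot\frac1{n!}\int_U\partial_\rho^n a(0,\mathbf u,\mathbf s)\dd\mathbf u .
\]
The \(\mathbf u\)-integrals are holomorphic by dominated convergence and Morera, using the uniform \(L^1\) hypothesis. The remainder integral converges absolutely and locally uniformly, so it is holomorphic. This gives the claimed shape, with \(J=\{1\}\) or \(J=\varnothing\).

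For general \(k\), I would induct on \(k\), or equivalently apply the one-variable decomposition successively in \(\rho_1,\ldots,\rho_k\). Subtracting in \(\rho_1\) yields terms \((\lambda_1+n_1)^{-1}\) times an integral of the same form in \((\rho_2,\ldots,\rho_k)\) with amplitude \(\partial_{\rho_1}^{n_1}a|_{\rho_1=0}/n_1!\). It also yields a remainder whose \(\rho_1\)-integral is already holomorphic. The hypotheses are stable under these operations: the new amplitudes are smooth in the remaining \(\rho\)'s, holomorphic in \(\mathbf s\), and uniformly \(L^1\) in the absorbed variables \((\rho_1,\mathbf u)\). Expanding the product of the resulting factors gives a finite sum over \(J\) and \(\mathbf n\) with \(n_j\le M\), hence finitely many \(n_j\) on \(K\), with holomorphic numerators \(h_{J,\mathbf n}\). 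Uniqueness of meromorphic continuation shows this agrees with \(I\) where all \(\Re\lambda_j>0\).

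For parameter derivatives, I would differentiate the finite decomposition term by term. By Leibniz, \(\partial_{\mathbf s}^\beta\bigl(h\prod_{j\in J}(\lambda_j+n_j)^{-1}\bigr)\) is a finite sum of holomorphic functions times \(\prod_{j\in J}(\lambda_j+n_j)^{-m_j}\) with \(m_j\ge1\), because \(\partial(\lambda_j+n_j)^{-m}=-m\,(\partial\lambda_j)(\lambda_j+n_j)^{-m-1}\) and \(\partial\lambda_j\) is constant. The holomorphic part stays holomorphic. So the only change is that the orders of the same affine denominators increase, and no new loci appear.

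The main obstacle is the bookkeeping in the non-compact \(\mathbf u\) case. One must check that the Taylor remainders \(r_M\), which are given by integral formulas of the form \(\int_0^1(1-\theta)^M\partial_\rho^{M+1}a(\theta\rho,\dots)\dd\theta\), stay uniformly \(L^1\) in \(\mathbf u\) on compact parameter sets, so that Morera and dominated convergence apply at every stage of the induction. I would handle this by stating explicitly, as the hypothesis permits, that all mixed derivatives \(\partial_{\boldsymbol\rho}^{\mathbf n}\partial_{\mathbf s}^\beta a\) with \(|\mathbf n|\le k(M+1)\) are bounded by an \(L^1\) function of \(\mathbf u\) uniformly in \(\boldsymbol\rho\in[0,1]^k\) and \(\mathbf s\) near \(K\).
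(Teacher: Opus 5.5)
Your proposal is correct and follows essentially the same route as the paper: iterated one-variable Taylor subtraction in each \(\rho_j\), with \(M\) chosen so that the remainder exponent has positive real part, and with the uniform \(L^1\) hypothesis carrying the noncompact remainders through the iteration. The only minor difference is in the derivative clause and in the holomorphy of the remainders. You differentiate the finite decomposition term by term (Leibniz applied to \(h\prod_{j\in J}(\lambda_j+n_j)^{-1}\)) and use Morera for the remainders, whereas the paper differentiates under the integral sign using \(\int_0^1\rho^{\varepsilon-1}|\log\rho|^q\,\dd\rho=q!/\varepsilon^{q+1}\); both arguments work.
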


\begin{proof}
Use successively the one-variable identity
\begin{equation}
 \int_0^1\rho^{\lambda-1}f(\rho)\,\dd\rho
 =\sum_{n=0}^{M-1}\frac{f^{(n)}(0)}{n!(\lambda+n)}
  +\int_0^1\rho^{\lambda+M-1}f_M(\rho)\,\dd\rho,
 \label{eq:one-variable-mellin}
\end{equation}
where Taylor's formula writes
\(f(\rho)=\sum_{n<M}f^{(n)}(0)\rho^n/n!+\rho^Mf_M(\rho)\).
Choose \(M\) so that
\(\Re(\lambda(\mathbf s)+M)>0\) throughout the parameter
neighborhood under consideration.  The remainder is then holomorphic; in the noncompact case this uses
the assumed uniform \(L^1\)-bounds in \(\mathbf u\).
Parameter derivatives insert powers of \(\log\rho\); for every
\(\varepsilon>0\) and integer \(q\geq0\),
\[
 \int_0^1\rho^{\varepsilon-1}|\log\rho|^q\,\dd\rho
 =\frac{q!}{\varepsilon^{q+1}},
\]
so differentiation under the integral is justified uniformly on
compact sets.  Iterating \eqref{eq:one-variable-mellin} in the
\(k\) boundary variables proves the assertion.  The same argument,
with the remaining smooth variables treated as parameters, proves
holomorphy of every coefficient.
\end{proof}

\begin{lemma}[Generic flag residues and uniqueness of the aggregate coefficients]
\label{lem:flag-canonicity}
For a strict decorated flag
\[
 \mathfrak F=((S_1,\ell_1),\ldots,(S_k,\ell_k)),
 \qquad S_1\subsetneq\cdots\subsetneq S_k,
\]
the affine forms
\(\Lambda_{S_1,\ell_1},\ldots,\Lambda_{S_k,\ell_k}\) have
linearly independent differentials.  At a point of their intersection
lying on no additional carrier hyperplane, the coefficient of
\(\prod_{j=1}^k\Lambda_{S_j,\ell_j}^{-1}\) is therefore an intrinsic
multivariable residue.  On a higher carrier intersection, individual
resolved terms may be redistributed by refinement, but their complete
Laurent coefficient on any transverse affine slice is independent of
all resolution choices.
\end{lemma}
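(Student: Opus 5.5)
The argument splits into three parts: linear independence of the differentials, intrinsicness of the residue at a generic point, and resolution independence of the aggregate coefficients.

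\emph{Linear independence.} The differential of \(\Lambda_{S_j,\ell_j}\) is the indicator vector \(\bfone_{R(S_j)}\in\C^{\Psi^+}\), since the shift \(-|S_j|+\ell_j\) is constant. Because the flag is strict, \(R(S_1)\subsetneq\cdots\subsetneq R(S_k)\): by Lemma~\ref{lem:support-lattice}, \(\beta_i\in R(S)\) exactly when \(i\in S\), so a node \(i_j\in S_j\setminus S_{j-1}\) gives a coroot \(\beta_{i_j}\in R(S_j)\setminus R(S_{j-1})\). Order the coordinates as \(\beta_{i_1},\ldots,\beta_{i_k}\). The \(k\times k\) minor of the matrix of indicator vectors, restricted to these coordinates, is then triangular with ones on the diagonal. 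Indeed, \(\beta_{i_m}\in R(S_j)\) holds iff \(i_m\in S_j\), and this forces \(m\le j\) because \(i_m\notin S_{m-1}\supseteq S_j\) when \(j<m\). Hence the differentials are independent, and the \(\Lambda_{S_j,\ell_j}\) can be completed to affine coordinates \((\Lambda_{S_1,\ell_1},\ldots,\Lambda_{S_k,\ell_k},\mathbf w)\) on \(\C^{|\Psi^+|}\).

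\emph{Generic point.} Near a point \(p\) of the intersection lying on no other carrier hyperplane, local finiteness of \(\cA_\Phi\) gives a neighborhood meeting only the \(k\) hyperplanes of \(\mathfrak F\). Assemble \(Z_\Phi\) from a partition of unity on the wonderful model, using Lemma~\ref{lem:normalized-mellin} and the charts of Lemma~\ref{lem:chart-orders}. Lemma~\ref{lem:uniform-mellin} then shows that, near \(p\), \(Z_\Phi\) is a finite sum of holomorphic functions times products \(\prod_{j\in J}\Lambda_{S_j,\ell_j}^{-1}\) with \(J\subseteq\{1,\ldots,k\}\), plus a holomorphic term. Any denominator \(\lambda+n\) corresponding to some other carrier form is a unit near \(p\) and is absorbed into the numerator. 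This puts \(Z_\Phi\) in the form \(G/\prod_j\Lambda_{S_j,\ell_j}\) with \(G\) holomorphic. The coefficient of the full product is \(G|_{\bigcap H}\) (a function of \(\mathbf w\)), which depends only on the germ of \(Z_\Phi\) and the chosen coordinate forms. It is therefore intrinsic; this is exactly the normalization convention for \(\Res\). The point to check is that \(G\) is unique modulo the ideal \((\Lambda_{S_1,\ell_1},\ldots,\Lambda_{S_k,\ell_k})\). This holds because the difference of two representations times \(\prod_j\Lambda_{S_j,\ell_j}\) vanishes identically, and the coordinate ring is a domain.

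\emph{Higher intersections.} At a point \(p\) on more carrier hyperplanes, the individual flag terms produced by a resolution depend on the choices: refining the model or changing the partition of unity can shuffle holomorphic numerators between terms with different sets \(J\). The invariant object is the meromorphic germ of \(Z_\Phi\) at \(p\) itself, since it is the continuation of the convergent series \eqref{eq:KMT} and meromorphic continuation is unique. Fix a transverse affine curve \(\gamma(t)\) through \(p\) not contained in the polar set. The restriction \(Z_\Phi(\gamma(t))\) is a meromorphic function of one variable whose Laurent coefficients are determined by the germ. Each resolution writes this restriction as the sum over all incident flags of the restricted terms, each expanded in \(t\). Summing these expansions reproduces the Laurent series of \(Z_\Phi\circ\gamma\), so every aggregate coefficient \(\cA_m(\gamma)\) equals the corresponding intrinsic Laurent coefficient and is independent of the resolution. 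The main obstacle is ensuring that this sum is genuinely complete. Every chart must contribute, including flags whose hyperplanes contain \(\gamma\) only to higher order, and the expansion must be uniform so that the finite sum of Lemma~\ref{lem:uniform-mellin} can be restricted to \(\gamma\) term by term. I would handle this by noting that the restriction is taken only after the finite decomposition is fixed on a compact neighborhood. Then no infinite rearrangement occurs, and uniqueness of Laurent expansions in one variable finishes the argument.
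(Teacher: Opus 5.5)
Your proposal is correct and follows essentially the paper's route. Independence comes from the triangular minor on the simple-coroot coordinates \(s_{\beta_{i_j}}\); completing to affine coordinates makes the generic coefficient an ordinary multivariable residue; and at higher intersections, uniqueness of the one-variable Laurent expansion of the pulled-back germ on a transverse affine slice gives resolution-independence of the aggregate coefficients. The only difference is presentational: you justify intrinsicness directly through \(G=Z_\Phi\prod_j\Lambda_{S_j,\ell_j}\) restricted to the intersection, whereas the paper identifies the coefficient with the iterated normal residue along \(D_{S_1}\cap\cdots\cap D_{S_k}\) and notes its invariance under partitions of unity.
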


\begin{proof}
Choose \(i_j\in S_j\setminus S_{j-1}\), with \(S_0=\varnothing\),
and evaluate the coefficient vectors of the differentials on the
simple coroots \(\beta_{i_j}\).  Since
\(\beta_{i_j}\in R(S_m)\) exactly when \(m\geq j\), the resulting
matrix is triangular with diagonal entries one.  The differentials are
therefore independent and may be completed to affine coordinates near
a generic point of the flag intersection.  There the coefficient is
the ordinary multivariable residue and is unique.

For a generic flag, the coefficient may equivalently be taken as the
iterated normal residue of the globally pulled-back density along
\(D_{S_1}\cap\cdots\cap D_{S_k}\).  Normal residues are local and
additive, so a partition of unity summing to one gives the same global
coefficient, and coordinate changes preserve it.  At a point where
further carriers meet, pull the meromorphic germ back to a transverse
affine slice.  Every resolved construction gives the same one-variable
meromorphic germ; hence every Laurent coefficient of that pullback is
unique.  Summing all resolved terms contributing to a given Laurent
order gives the same result for every chart, partition of unity,
blow-up coordinate system, and Taylor truncation.  This is the uniqueness of the aggregate coefficients used below.
\end{proof}

\begin{lemma}[Uniform Schwinger-tail estimate]
\label{lem:schwinger-tail}
On every resolved support chart, factor the exceptional boundary powers
as in \eqref{eq:chart-density}.  On the part of that chart mapping to
\(\lVert\mathbf t\rVert_1\ge1\), the remaining coefficient and all its
spatial and parameter derivatives are uniformly integrable in the
unbounded tangential variables on compact parameter sets.  Consequently
the tail has a normal-crossing Mellin expansion with the same carrier
forms \(\Lambda_{S,\ell}\); Schwinger infinity introduces no additional
affine polar form.
\end{lemma}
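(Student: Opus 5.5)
The plan is to split the Schwinger integral \eqref{eq:Schwinger} with a smooth partition of unity into a compact part (\(\lVert\mathbf t\rVert_1\le 2\)) and a tail (\(\lVert\mathbf t\rVert_1\ge1\)). On the tail, the wonderful-model resolution is only needed near points where some support subspaces \(E_S\) meet the unbounded region. By Lemma~\ref{lem:support-lattice}, these are points where some of the \(A_i\) vanish while \(\lVert\mathbf t\rVert_1\) stays large. This happens exactly when the mass sits on coroots supported in \(S^c\), so that \(A_j\) is large for some \(j\notin S\).

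On such a chart, Lemma~\ref{lem:chart-orders} gives the density \eqref{eq:chart-density}. I would write the amplitude explicitly as a product of three pieces:
\begin{itemize}
\item the smooth units \(u_\alpha,u_J\) and the power factors \(u_\alpha^{s_\alpha-1}\);
\item the analytic factors \(z/(e^z-1)\) evaluated at \(z=\pi^*A_i\) for \(i\in S_k\);
\item the untouched Todd factors \((e^{A_i}-1)^{-1}\) for \(i\notin S_k\).
\end{itemize}

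The key estimate is that on the tail region at least one \(A_i\) with \(i\notin S_k\) is bounded below by \(c\lVert\mathbf t\rVert_1\). More precisely, I would show that \(\sum_{i\in I}A_i\ge c\lVert\mathbf t\rVert_1\), using \(b_i(\alpha)\ge0\) and the fact that each \(\alpha\) has some \(b_i(\alpha)\ge1\). The boundary coordinates \(\rho_j\) are small near the exceptional divisors, so \(\sum_{i\in S_k}A_i\) is small there. Hence \(\sum_{i\notin S_k}A_i\gtrsim\lVert\mathbf t\rVert_1\).

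Each Todd factor \((e^{A_i}-1)^{-1}\) is at most \(\min(A_i^{-1},Ce^{-A_i/2})\) on the relevant set. So the product of the non-resolved factors decays like \(e^{-c'\lVert\mathbf u\rVert}\) in the unbounded tangential variables, up to polynomially bounded contributions from the small \(A_i\) in \(S^c\) that are not yet resolved. Those contributions are handled by the same chart construction at lower levels.

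Next come derivatives. Spatial derivatives of \(1/(e^z-1)\) and of \(z/(e^z-1)\) keep the same exponential bounds. Parameter derivatives only insert factors \(\log u_\alpha\) and \(\log\rho_j\), which grow polynomially in \(\log\lVert\mathbf t\rVert\) and are absorbed by the exponential decay. The normalized gamma factors are entire by Lemma~\ref{lem:normalized-mellin}, so they contribute only locally bounded holomorphic multipliers on compact parameter sets. Together these give uniform \(L^1\) bounds in \(\mathbf u\) for the amplitude and all its mixed derivatives, which is exactly the hypothesis of the noncompact case of Lemma~\ref{lem:uniform-mellin}.

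Applying Lemma~\ref{lem:uniform-mellin} with \(\lambda_j=\Lambda_{S_j,0}\) then produces denominators \(\Lambda_{S_j,0}+n_j=\Lambda_{S_j,n_j}\) only. The radial direction \(\lVert\mathbf t\rVert\to\infty\) carries exponential decay rather than a power, so it yields no Mellin denominator. Hence no new affine polar form appears.

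I expect the main obstacle to be uniformity near the corners at infinity where some \(A_i\), \(i\notin S_k\), also degenerate. There exponential decay comes only from a subset of the coordinates. The first approach I would try is induction on \(|S_k|\) via the maximal support flag: at any tail point, take \(S\) to be the full set of vanishing nodes, so that the complementary \(A_i\) are bounded below there by compactness of the projectivized orthant. The decay then comes from a homogeneity argument on the sphere \(\lVert\mathbf t\rVert_1=R\), scaling by \(R\).
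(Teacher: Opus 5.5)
Your outline is sound and ends where the paper does: uniform $L^1$ bounds feed the noncompact case of Lemma~\ref{lem:uniform-mellin}, so only the denominators $\Lambda_{S,\ell}$ appear. The difference is in how you get the exponential decay.

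\textbf{The paper's route.} It never separates resolved from unresolved Todd factors. It uses the single inequality $(e^x-1)^{-1}\le C_r e^{-x/(2r)}(1+x^{-1})$ for all $x>0$, applied to every $A_i$. Together with $\sum_i A_i=\sum_\alpha\operatorname{ht}(\alpha)t_\alpha\ge\lVert\mathbf t\rVert_1$, this bounds the whole Todd product by $Ce^{-c\lVert\mathbf t\rVert_1}\prod_i(1+A_i^{-1})$ on every chart at once. The only remaining local input is that each singular $A_i^{-1}$ is an extracted exceptional monomial times a unit.

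\textbf{Your route.} You observe that the resolved forms $A_i$, $i\in S_k$, involve only $t_\alpha$ with $\alpha\in R(S_k)$, so they are controlled by the boundary coordinates. Hence $\sum_{i\notin S_k}A_i\ge\lVert\mathbf t\rVert_1-C$ carries the decay. This works, but it forces you to face the corner problem you flag.

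\textbf{An intermediate misstatement.} Your claim that small unresolved $A_i$ give ``polynomially bounded contributions'' is wrong. Such a factor behaves like $A_i^{-1}$, which is a genuine singularity, not polynomial growth. Your proposed fix is the right one: take $S_k$ to be the full vanishing set, and use a conic partition so that $A_i\ge c\lVert\mathbf t\rVert_1$ for $i\notin S_k$ by compactness of the projectivized orthant. This is exactly the chart property the paper uses implicitly when it says every singular $A_i^{-1}$ is already extracted.

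\textbf{What each route buys.} Yours gives an explicit justification of that chart property. The paper's inequality gives decay that is uniform over charts with no case analysis about which $A_i$ are large.

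\textbf{A smaller point.} In the tail, Lemma~\ref{lem:normalized-mellin} is not needed merely because $1/\Gamma(s_\alpha)$ is an entire multiplier. Near residual tangential faces $t_\alpha=0$, the factor $t_\alpha^{s_\alpha-1}$ is not integrable when $\Re s_\alpha\le0$. It is the distributional continuation of $t_{\alpha,+}^{s_\alpha-1}/\Gamma(s_\alpha)$ that removes those faces, as the paper says explicitly. Your $L^1$ bounds should be stated after that step.
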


\begin{proof}
For fixed rank \(r\), there is a constant \(C_r\) such that, for every
\(x>0\),
\[
 \frac1{e^x-1}\le C_r e^{-x/(2r)}(1+x^{-1}).
\]
Moreover,
\[
 \sum_{i=1}^r A_i(\mathbf t)
 =\sum_{\alpha\in\Psi^+}\operatorname{ht}(\alpha)t_\alpha
 \ge \lVert\mathbf t\rVert_1.
\]
Thus the Todd product is bounded by
\[
 C e^{-c\lVert\mathbf t\rVert_1}
 \prod_{i=1}^r(1+A_i(\mathbf t)^{-1}).
\]
On a resolved chart, every singular factor \(A_i^{-1}\) is the
exceptional monomial already extracted in \eqref{eq:chart-density},
times the inverse of a positive smooth unit.  Residual coordinate faces
are entire by Lemma~\ref{lem:normalized-mellin}.  After this extraction,
powers of the Schwinger variables and all parameter derivatives
contribute only polynomial and logarithmic growth in the remaining
variables.  Exponential decay therefore gives uniform \(L^1\)-bounds
for the coefficient and all required derivatives.  The noncompact form
of Lemma~\ref{lem:uniform-mellin} applies and produces only the same
exceptional Mellin denominators.  Hence infinity changes the
holomorphic coefficients but creates no new carrier form.
\end{proof}

\begin{theorem}[Local flag normal form]\label{thm:flag-normal-form}
The function \(Z_\Phi(\mathbf s)\) continues meromorphically to
\(\C^{|\Psi^+|}\).  Its polar divisor is supported on \(\cA_\Phi\).
Locally one has
\begin{equation}\label{eq:flag-normal-form}
 Z_\Phi(\mathbf s)=H(\mathbf s)+
 \sum_{\mathfrak F}C_{\mathfrak F}(\mathbf s)
 \prod_{(S,\ell)\in\mathfrak F}
 \Lambda_{S,\ell}(\mathbf s)^{-1},
\end{equation}
where \(H\) and the aggregate coefficients \(C_{\mathfrak F}\) are
holomorphic.  On the maximal support wonderful model,
\(\mathfrak F\) ranges over strict decorated support flags, and the sum
is finite in every bounded parameter neighborhood.  At a generic flag intersection the corresponding coefficient is the
intrinsic multivariable residue of Lemma~\ref{lem:flag-canonicity}; at
higher intersections only the complete aggregate Laurent coefficients
are asserted to be intrinsic.
\end{theorem}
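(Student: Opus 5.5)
The plan is to assemble the local normal form from the Schwinger representation \eqref{eq:Schwinger} and the lemmas already proved. First, I would write \(Z_\Phi\) in the absolute-convergence region as the integral \eqref{eq:Schwinger}. I would then split the closed orthant with a smooth partition of unity into three pieces: a compact piece near the origin, a piece near ordinary coordinate faces away from every support subspace \(E_S\), and the tail \(\lVert\mathbf t\rVert_1\ge1\). On the piece near coordinate faces, every Todd denominator is smooth. Lemma~\ref{lem:normalized-mellin} therefore gives an entire contribution there, because the normalized factors \(t_\alpha^{s_\alpha-1}/\Gamma(s_\alpha)\) absorb the gamma prefactor.

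Near the origin, I would pull back by the maximal wonderful model of the Boolean support lattice of Lemma~\ref{lem:support-lattice}. Its boundary strata are indexed by strict flags \(S_1\subsetneq\cdots\subsetneq S_k\), and I would cover the preimage by finitely many charts of the type in Lemma~\ref{lem:chart-orders}. Compactness of the preimage of a neighbourhood of the origin gives finiteness of the charts. In each chart the density has the form \eqref{eq:chart-density}. Lemma~\ref{lem:uniform-mellin}, applied with \(\lambda_j=\Lambda_{S_j,0}\), then yields, on any compact parameter set \(K\), a finite sum of holomorphic functions times \(\prod_{j\in J}(\Lambda_{S_j,0}+n_j)^{-1}=\prod_{j\in J}\Lambda_{S_j,n_j}^{-1}\), plus a holomorphic remainder.

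Each index set \(J\) is a subflag of a strict flag, so it is itself a strict decorated flag \(\mathfrak F=((S_j,n_j))_{j\in J}\). The tail is handled identically, using Lemma~\ref{lem:schwinger-tail} and the noncompact form of Lemma~\ref{lem:uniform-mellin}; by that lemma, no new affine forms arise. I would then collect, over all charts and all pieces, the terms with the same decorated flag into a single coefficient \(C_{\mathfrak F}\). This gives \eqref{eq:flag-normal-form} on a neighbourhood of \(K\), with only finitely many terms because only finitely many \(n_j\) occur on \(K\).

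Meromorphic continuation to all of \(\C^{|\Psi^+|}\) follows because these local expressions agree with \(Z_\Phi\) on the convergence region and are meromorphic on neighbourhoods of arbitrary compact sets. By the identity theorem they glue on overlaps. Outside \(\cA_\Phi\) every term is holomorphic, so the polar divisor lies in \(\cA_\Phi\). The intrinsic nature of the coefficients is exactly Lemma~\ref{lem:flag-canonicity}. At a generic point of a flag intersection, the coefficient of \(\prod\Lambda^{-1}\) is the multivariable residue, since the differentials are independent. At deeper intersections, only the aggregate Laurent coefficients on transverse slices are claimed, and these are resolution-independent.

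The main obstacle is making the chart and partition bookkeeping uniform. The chart descriptions of Lemma~\ref{lem:chart-orders} hold only near relative interiors of strata and after localizing away from coordinate faces. Mixed neighbourhoods, where an exceptional divisor meets a residual coordinate face, need the normalized Mellin factor and the Taylor expansion in \(\rho\) to be combined. There I would first treat the residual coordinate variables as extra tensor factors, using the distributional identity of Lemma~\ref{lem:normalized-mellin}. I would then check that the resulting amplitude is still smooth in \(\boldsymbol\rho\) and holomorphic in \(\mathbf s\), so that Lemma~\ref{lem:uniform-mellin} applies unchanged.
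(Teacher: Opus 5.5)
Your proposal is correct and follows essentially the same route as the paper's proof. Both arguments use a spatial partition of unity with a compact/tail split, Lemma~\ref{lem:normalized-mellin} for coordinate faces where the Todd factors are smooth, and the maximal wonderful model, whose boundary strata are strict chains. Both then apply Lemmas~\ref{lem:chart-orders}, \ref{lem:uniform-mellin} and~\ref{lem:schwinger-tail} to produce only denominators $\Lambda_{S,\ell}$, glue the local continuations by uniqueness, and invoke Lemma~\ref{lem:flag-canonicity} for intrinsicness. Your explicit treatment of charts where an exceptional divisor meets a residual coordinate face (pair the entire normalized Mellin distribution first, then apply Lemma~\ref{lem:uniform-mellin} to the resulting smooth amplitude) spells out a step the paper leaves implicit in ``after localizing away from ordinary coordinate faces.''
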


\begin{proof}
Work on a compact parameter polydisc and insert a smooth spatial
partition of unity in \eqref{eq:Schwinger}, including a compact/tail
radial partition.  Coordinate faces on which the Todd factors are
smooth are holomorphic by Lemma~\ref{lem:normalized-mellin}; hence it
remains to resolve the support-subspace arrangement.

Use the maximal wonderful model.  By
Lemma~\ref{lem:support-lattice} and the Feichtner--Kozlov nested-set
criterion, its boundary intersections are precisely strict support
chains.  On each chart, Lemma~\ref{lem:chart-orders} reduces every
localized contribution to a finite sum of normal-crossing Mellin
integrals with boundary forms \(\Lambda_{S,0}\).  In the compact
part the amplitudes have compact tangential support; in the tail they
satisfy the uniform \(L^1\)-bounds of
Lemma~\ref{lem:schwinger-tail}.  Apply
Lemma~\ref{lem:uniform-mellin}.  Every polar denominator is
\(\Lambda_{S,0}+\ell=\Lambda_{S,\ell}\), with
\(\ell\in\Nzero\), and each boundary variable occurs to at most the
first power in a denominator.  Products of denominators occur only for
the strict flags indexing the chart.

The resolution is global before any projective coefficient is
integrated.  Thus a boundary pole of a would-be projective link is
already represented by an additional divisor of the same wonderful
model, hence by a longer strict flag; no new affine polar form is
created by a later continuation step.  The uniform Mellin lemma makes
the coefficient left after all incident boundary variables have been
expanded holomorphic on the chosen parameter neighborhood.  Only
finitely many shifts can meet that neighborhood, so the local sum is
finite.  Lemma~\ref{lem:flag-canonicity} identifies the coefficient at a
generic flag intersection with its intrinsic multivariable residue and
shows that every aggregate coefficient obtained after specialization is
independent of the resolved presentation.

The argument applies on every compact parameter set, with a sufficiently
high but finite Taylor order chosen there.  Lemma~\ref{lem:schwinger-tail}
shows that the noncompact pieces contribute the same carrier forms and
normally convergent holomorphic coefficients.  These local
continuations agree on overlaps by uniqueness, giving meromorphic
continuation on the whole parameter space and the normal form
\eqref{eq:flag-normal-form}.
\end{proof}

\section{Generic divisors and explicit residue functions}

We first compute the residues on proper supports and then treat the
full-support contribution separately.

For $S\subsetneq I$, $\alpha\in R(S)$ and
$\mathbf u\in(0,\infty)^S$, $\mathbf n\in\N^{S^c}$, write
\[
 A_{\alpha,S}(\mathbf u)=\sum_{i\in S}b_i(\alpha)u_i,
 \qquad
 B_{\alpha,S}(\mathbf n)=\sum_{j\in S^c}b_j(\alpha)n_j.
\]
For $\alpha\in C(S)$ both quantities are positive.  Let
\[
 \Sigma_S=\{\mathbf u\in(0,\infty)^S:\sum_{i\in S}u_i=1\}.
\]
On \(\Sigma_S\), \(\dd\mathbf u\) denotes the standard Dirichlet
simplex measure: after eliminating any one coordinate, it is Lebesgue
measure in the remaining \(|S|-1\) coordinates.  The transition between
two such charts has determinant \(\pm1\), and a singleton simplex has
mass one.  This is the measure arising from the radial change of
variables \(x_i=R u_i\).
If $\mathbf k=(k_\alpha)_{\alpha\in C(S)}\in\Nzero^{C(S)}$, extend it
by $k_\alpha=0$ on $R(S)\setminus C(S)$ and put
\begin{align}
 \cP_{S,\mathbf k}(\mathbf s)
 &:=\FP\int_{\Sigma_S}
      \prod_{\alpha\in R(S)}
      A_{\alpha,S}(\mathbf u)^{-s_\alpha-k_\alpha}\,\dd\mathbf u,
      \label{eq:Psk}\\
 \cW_{S,\mathbf k}(\mathbf s)
 &:=\sum_{\mathbf n\in\N^{S^c}}
      \prod_{\beta\in\Psi^+_{S^c}}
      B_{\beta,S}(\mathbf n)^{-s_\beta}
      \prod_{\alpha\in C(S)}
      B_{\alpha,S}(\mathbf n)^{k_\alpha}.
      \label{eq:Wsk}
\end{align}
The finite part in \eqref{eq:Psk} means the canonical meromorphic value;
in an ordinary convergence chamber it is the displayed integral.
Likewise \eqref{eq:Wsk} is continued from its convergence chamber.  We
write
\[
 \cR_{S,\ell}(\mathbf s)
 :=\Res_{\Lambda_{S,\ell}=0}Z_\Phi(\mathbf s)
\]
for the resulting residue function on \(H_{S,\ell}\).

\begin{lemma}[Homogeneous interior sector and full-support shifts]
\label{lem:interior-sector}
Let \(\chi\in C_c^\infty(\Sigma_I^\circ)\), let
\(\eta\in C^\infty([0,\infty))\) vanish near zero and equal one for
large arguments, and write \(|x|_1=\sum_i x_i\).  Put
\[
 \sigma(\mathbf s)=\sum_{\alpha\in\Psi^+}s_\alpha
\]
and
\[
 Z_\chi(\mathbf s)=
 \sum_{\mathbf m\in\N^r}
 \eta(|\mathbf m|_1)\,
 \chi\!\left(\frac{\mathbf m}{|\mathbf m|_1}\right)
 \prod_{\alpha\in\Psi^+}L_\alpha(\mathbf m)^{-s_\alpha}.
\]
Then \(Z_\chi\) is holomorphic in the full-support radial variable
except for a possible simple pole at \(\sigma(\mathbf s)=r\).  Its
residue there is
\[
 \int_{\Sigma_I}
 \chi(\mathbf u)
 \prod_{\alpha\in\Psi^+}L_\alpha(\mathbf u)^{-s_\alpha}
 \,\dd\mathbf u.
\]
In particular, an interior sector produces no full-support shifted
hyperplane \(H_{I,\ell}\) with \(\ell\geq1\).  The complementary angular
pieces are localized near proper coordinate faces and belong to proper
supports in the local flag expansion.
\end{lemma}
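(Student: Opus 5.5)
The plan is to treat \(Z_\chi\) as a lattice sum of a function homogeneous of degree \(-\sigma(\mathbf s)\), cut off to a cone compactly inside the open positive orthant, and to apply Euler--Maclaurin (equivalently Poisson summation) in polar form. Write \(F(\mathbf x)=\eta(|\mathbf x|_1)\chi(\mathbf x/|\mathbf x|_1)\prod_\alpha L_\alpha(\mathbf x)^{-s_\alpha}\) for \(\mathbf x\in(0,\infty)^r\). On the support of \(\chi(\mathbf x/|\mathbf x|_1)\) each \(L_\alpha(\mathbf x)\) is comparable to \(|\mathbf x|_1\), with constants depending only on \(\operatorname{supp}\chi\). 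Hence \(F\) is smooth, vanishes near the origin and near every coordinate face, and for \(|\mathbf x|_1\) large it is exactly homogeneous of degree \(-\sigma(\mathbf s)\). Every derivative \(\partial^\nu F\) is \(O(|\mathbf x|_1^{-\Re\sigma-|\nu|})\), uniformly for \(\mathbf s\) in compact sets, and is holomorphic in \(\mathbf s\).

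First, I compare the sum with the integral. Since \(F\) is smooth and compactly supported in each cross-section, it extends by zero to a smooth function on \(\mathbb R^r\). Poisson summation then gives
\[
\sum_{\mathbf m\in\mathbb Z^r}F(\mathbf m)=\int_{\mathbb R^r}F+\sum_{\boldsymbol\xi\ne0}\hat F(\boldsymbol\xi),
\]
and for \(\boldsymbol\xi\neq0\) we may integrate by parts \(M\) times. This leaves integrands of size \(O(|\mathbf x|^{-\Re\sigma-M})\) times \(|\boldsymbol\xi|^{-M}\). With \(M\) large, the remainder \(\sum_{\boldsymbol\xi\ne0}\hat F(\boldsymbol\xi)\) converges absolutely and uniformly on any compact parameter set. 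It is therefore entire in \(\mathbf s\), and the same holds for parameter derivatives, since these only insert logarithms. An alternative, which I would use if the Fourier bookkeeping is awkward, is the Euler--Maclaurin formula with a high-order remainder in each coordinate. Its boundary terms vanish because \(F\) vanishes near all coordinate faces, so the error is \(\int \partial^{\nu}F\cdot(\text{bounded periodic})\) with \(|\nu|\) large, which is holomorphic.

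Second, I compute the main term in polar coordinates \(\mathbf x=R\mathbf u\), \(\mathbf u\in\Sigma_I\), \(\dd\mathbf x=R^{r-1}\dd R\,\dd\mathbf u\). This is the measure convention fixed before the lemma. Homogeneity gives
\[
\int F=\int_0^\infty\eta(R)R^{r-1-\sigma}\,\dd R\cdot\int_{\Sigma_I}\chi(\mathbf u)\prod_\alpha L_\alpha(\mathbf u)^{-s_\alpha}\dd\mathbf u .
\]
The angular factor is entire in \(\mathbf s\), because \(L_\alpha\) is bounded below on \(\operatorname{supp}\chi\). For the radial factor, split \(\eta=1-(1-\eta)\). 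The compactly supported piece \((1-\eta)\) gives an entire function once the integrable singularity at \(0\) is removed, since \(\eta\) vanishes near \(0\). More cleanly, integrating by parts once gives
\[
\int_0^\infty\eta(R)R^{r-1-\sigma}\dd R=\frac{1}{\sigma-r}\int_0^\infty\eta'(R)R^{r-\sigma}\dd R ,
\]
valid for \(\Re\sigma>r\), where \(\eta'\) is compactly supported away from \(0\). So this factor is \((\sigma-r)^{-1}\) times an entire function whose value at \(\sigma=r\) equals \(\int\eta'=1\). This yields exactly one simple pole at \(\sigma=r\), with the stated residue in the radial variable \(\sigma-r\). No poles occur at \(\sigma=r+\ell\) for \(\ell\ge1\), which are the hyperplanes \(H_{I,\ell}\), because no Taylor expansion in a boundary variable is ever needed. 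The final sentence of the lemma then follows, since \(1-\chi\) is supported near \(\partial\Sigma_I\), that is, near proper faces, where Lemma~\ref{lem:chart-orders} places the contributions.

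The main obstacle is the uniformity of the Poisson remainder. One must check that the derivatives of the cutoff \(\chi(\mathbf x/|\mathbf x|_1)\) and of \(\eta\) do not spoil the decay \(O(|\mathbf x|^{-\Re\sigma-|\nu|})\), and that integration by parts is legitimate even where \(\Re\sigma\) is very negative. For that, I would first justify the identity for \(\Re\sigma\) large, where everything converges absolutely. Analytic continuation then extends it, since the remainder is entire for each fixed large \(M\). Choosing \(M\) according to the compact parameter set is what handles the very negative \(\Re\sigma\).
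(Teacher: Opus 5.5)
Your proof is correct, but it is organized differently from the paper's.

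The paper cuts \(Z_\chi\) into dyadic annuli \(|x|_1\asymp 2^j\) and applies Poisson summation to each compactly supported piece. It then proves the scaled bound \(|\widehat a_{j,\mathbf s}(k)|\le C_{K,N}2^{j(r-\Re\sigma)}(1+2^j|k|)^{-N}\). With this bound the double sum over \(j\) and \(k\neq0\) converges normally for every \(\mathbf s\) in a compact set. Only the zero modes, which reassemble into the cone integral, need continuation.

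You instead apply Poisson summation once to the whole function \(F\) where \(\Re\sigma>r\). You integrate by parts \(M\) times using the symbol bounds \(\partial^\nu F=O(|x|_1^{-\Re\sigma-|\nu|})\), and then continue analytically.

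\begin{itemize}
\item The dyadic route needs no decay hypothesis in the Poisson step and makes the scaling \(2^{j(r-\sigma)}\) explicit.
\item Your route is shorter and uses only the exact homogeneity of \(F\) at infinity.
\item Your radial identity \(\int_0^\infty\eta(R)R^{r-1-\sigma}\dd R=(\sigma-r)^{-1}\int_0^\infty\eta'(R)R^{r-\sigma}\dd R\) is sharper than the paper's treatment. The numerator is entire and equals \(\int\eta'=1\) on \(\sigma=r\), so the residue comes out exactly as the angular integral. The paper instead writes the schematic \(\int_1^\infty R^{r-1-\sigma}\dd R\) plus an unspecified entire cutoff term.
\end{itemize}

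The closing sentence about the complementary angular pieces is treated at the same level of detail as in the paper.

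There are three small slips, none of which affects the conclusion:
\begin{itemize}
\item Since \(\Lambda_{I,\ell}=\sigma-r+\ell\), the hyperplane \(H_{I,\ell}\) is \(\sigma=r-\ell\), not \(\sigma=r+\ell\). This is harmless because you show holomorphy everywhere except \(\sigma=r\).
\item For fixed \(M\), the integrated-by-parts remainder is holomorphic only on \(\Re\sigma>r-M\), not entire. Entireness comes from letting \(M\) grow with the compact parameter set, as you note at the end.
\item The preliminary split \(\eta=1-(1-\eta)\) does not work, because \(\int_0^\infty R^{r-1-\sigma}\dd R\) converges for no \(\sigma\). The integration by parts you substitute is the correct fix.
\end{itemize}
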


\begin{proof}
Choose a smooth dyadic partition of unity in the radial variable.  On
each annulus the amplitude
\[
 \chi\!\left(\frac{x}{|x|_1}\right)
 \prod_\alpha L_\alpha(x)^{-s_\alpha}
\]
is smooth, is supported a positive distance from every coordinate and
root hyperplane, and is exactly homogeneous of degree
\(-\sigma(\mathbf s)\).  Extend it by zero to \(\mathbb R^r\) and apply
Poisson summation annulus by annulus.  The zero Fourier mode is the
ordinary cone integral.  After summing the dyadic partition, its polar
part is
\[
 \left(\int_{\Sigma_I}\chi(\mathbf u)
       \prod_\alpha L_\alpha(\mathbf u)^{-s_\alpha}
       \,\dd\mathbf u\right)
 \int_1^\infty R^{r-1-\sigma(\mathbf s)}\,\dd R,
\]
which has the single denominator \(\sigma(\mathbf s)-r\).

To make the nonzero-mode assertion uniform, let \(a_{j,\mathbf s}\)
be the amplitude on the dyadic annulus \(|x|_1\asymp2^j\).  After the
change \(x=2^jy\), repeated integration by parts gives, for every compact
parameter set \(K\) and every \(N\geq0\),
\[
 \bigl|\widehat a_{j,\mathbf s}(k)\bigr|
 \le C_{K,N}\,
 2^{j(r-\Re\sigma(\mathbf s))}
 (1+2^j|k|)^{-N},
 \qquad \mathbf s\in K,\quad k\in\mathbb Z^r\setminus\{0\}.
\]
The same estimate holds after any parameter derivative, with an
additional polynomial factor in \(j\), which is absorbed by increasing
\(N\).  Choosing \(N\) larger than \(r+\sup_K|r-\Re\sigma|\) makes the
double sum over \(j\) and \(k\neq0\) locally normally convergent.
Consequently the sum of all nonzero Fourier modes is entire in the
full exponent vector, not merely in \(\sigma\).  The finite-radius
cutoff also contributes an entire term.
Finally, a smooth partition of unity on the closed simplex separates the
interior cutoffs considered above from neighborhoods of its proper
faces.  Those face neighborhoods are exactly the lower-dimensional
support sectors resolved in Theorem~\ref{thm:flag-normal-form}.  Hence no
full-support shifted divisor remains.
\end{proof}

\begin{theorem}[Residue formula on a proper support]\label{thm:residue}
Let $\varnothing\ne S\subsetneq I$ and $\ell\geq0$.  At a generic point
of $H_{S,\ell}$,
\begin{equation}\label{eq:residue-general}
 \Res_{\Lambda_{S,\ell}=0} Z_\Phi(\mathbf s)
 =(-1)^\ell
 \sum_{\substack{\mathbf k\in\Nzero^{C(S)}\\|\mathbf k|=\ell}}
 \left(\prod_{\alpha\in C(S)}
       \frac{\RF{s_\alpha}{k_\alpha}}{k_\alpha!}\right)
 \cP_{S,\mathbf k}(\mathbf s)\,
 \cW_{S,\mathbf k}(\mathbf s).
\end{equation}
For $\ell=0$ this reduces to
\begin{equation}\label{eq:unshifted}
 \Res_{H_{S,0}}Z_\Phi
 =\cP_{S,\mathbf0}(\mathbf s)\,
   Z_{\Phi_{S^c}}(\mathbf s|_{\Psi^+_{S^c}}).
\end{equation}
For $S=I$, the only generic polar hyperplane is
\begin{equation}\label{eq:full}
 H_{I,0}:\quad \sum_{\alpha\in\Psi^+}s_\alpha=r,
\end{equation}
and its residue is
\begin{equation}\label{eq:fullres}
 \Res_{H_{I,0}}Z_\Phi
 =\FP\int_{\Sigma_I}
   \prod_{\alpha\in\Psi^+}
   L_\alpha(\mathbf u)^{-s_\alpha}\,\dd\mathbf u.
\end{equation}
No $H_{I,\ell}$ with $\ell\geq1$ is a component of the polar divisor.
\end{theorem}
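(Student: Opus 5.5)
The plan is to isolate the \(S\)-directions in the lattice sum \eqref{eq:KMT} and treat the complementary variables \(\mathbf n\in\N^{S^c}\) as parameters. For a proper support \(S\), write \(\mathbf m=(\mathbf x,\mathbf n)\) with \(\mathbf x\in\N^S\). Then \(L_\alpha(\mathbf m)=A_{\alpha,S}(\mathbf x)\) for roots supported in \(S\), \(L_\beta=B_{\beta,S}(\mathbf n)\) for \(\beta\in\Psi^+_{S^c}\), and \(L_\alpha=A_{\alpha,S}(\mathbf x)+B_{\alpha,S}(\mathbf n)\) for crossing roots \(\alpha\in C(S)\).

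By Theorem~\ref{thm:flag-normal-form}, the polar part along \(H_{S,\ell}\) at a generic point is produced only by the large-\(\mathbf x\) sector, where \(|\mathbf x|_1=R\to\infty\) with \(\mathbf n\) fixed. In the Schwinger/wonderful picture this is exactly the boundary divisor \(D_S\). Lemma~\ref{lem:chart-orders} gives the radial exponent \(\Lambda_{S,0}-1\) there, and Taylor monomials \(\rho^\ell\) produce \(\Lambda_{S,\ell}\).

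To make the coefficient explicit, I would work on the lattice side with an interior cutoff \(\chi\) on \(\Sigma_S\), as in Lemma~\ref{lem:interior-sector}. For each crossing root, expand
\[
(A_{\alpha,S}(\mathbf x)+B_{\alpha,S}(\mathbf n))^{-s_\alpha}=\sum_{k}\binom{-s_\alpha}{k}A^{-s_\alpha-k}B^{k},
\qquad \binom{-s}{k}=(-1)^k\frac{(s)_k}{k!}.
\]
Truncating at total order \(|\mathbf k|\le\ell\) leaves a remainder that is holomorphic near \(H_{S,\ell}\). Each term is homogeneous of degree \(-\sum_{R(S)}s_\alpha-|\mathbf k|\) in \(\mathbf x\). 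Poisson summation in \(\mathbf x\) with dyadic radial cutoffs, run exactly as in the proof of Lemma~\ref{lem:interior-sector}, leaves only the zero mode. That zero mode equals \(\int_{\Sigma_S}\chi\prod A^{-s_\alpha-k_\alpha}\,\dd\mathbf u\cdot\int_1^\infty R^{|S|-1-\sum s_\alpha-|\mathbf k|}\,\dd R\), so each term has a single denominator \(\Lambda_{S,|\mathbf k|}\) with residue equal to the cone integral.

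Collecting the terms with \(|\mathbf k|=\ell\), the signs combine into \((-1)^\ell\) and the coefficients into \(\prod_\alpha (s_\alpha)_{k_\alpha}/k_\alpha!\). Summing over \(\mathbf n\) with weights \(\prod_\beta B_\beta^{-s_\beta}\prod_\alpha B_\alpha^{k_\alpha}\) gives \(\cW_{S,\mathbf k}\). This sum converges in a chamber and continues meromorphically, being a polynomially weighted lower-rank KMT series; generic points avoid its poles. Removing the cutoff \(\chi\) means summing over a partition of unity on \(\Sigma_S\). The pieces near the faces of \(\Sigma_S\) are smaller supports \(T\subsetneq S\), and they contribute only through flags \(T\subsetneq S\), which are polar on other hyperplanes. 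So at a generic point of \(H_{S,\ell}\) those face integrals are read as their meromorphic continuation. This is precisely the finite part \(\FP\) in \eqref{eq:Psk}, and it yields \eqref{eq:residue-general}. For \(\ell=0\), only \(\mathbf k=0\) occurs and \(\cW_{S,\mathbf 0}=Z_{\Phi_{S^c}}\), which is \eqref{eq:unshifted}.

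For \(S=I\) there are no crossing roots and no \(\mathbf n\). Lemma~\ref{lem:interior-sector} gives only the pole at \(\sigma=r\), with residue equal to the interior cone integral. The face neighborhoods of \(\Sigma_I\) belong to proper supports and contribute no \(H_{I,\ell}\) denominators. The residue therefore again assembles into the finite-part integral \eqref{eq:fullres}, and no \(H_{I,\ell}\) with \(\ell\ge1\) is polar.

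The main obstacle is this finite-part step: showing that summing the cutoff pieces near the faces of \(\Sigma_S\) gives exactly the canonical meromorphic value of the simplex integral. The face pieces contain Taylor shifts of their own, and one must check that they produce no extra contribution with denominator \(\Lambda_{S,\ell}\) beyond the analytic continuation of \(\cP_{S,\mathbf k}\). My first approach is an induction on \(|S|\): apply the same radial expansion inside each face sector, and use Lemma~\ref{lem:flag-canonicity} to identify the coefficient of \(\Lambda_{S,\ell}^{-1}\) at a generic point with the continuation of the angular integral.
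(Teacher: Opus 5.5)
Your computation in the interior cone gives the right coefficients. However, your separation step differs from the paper's, and the finite-part step you flag is a genuine gap in your setup.

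The paper never truncates a binomial series. It writes each crossing factor as the Mellin--Barnes integral $(A+B)^{-s}=\frac1{2\pi i}\int_{(c)}\frac{\Gamma(s+z)\Gamma(-z)}{\Gamma(s)}A^{-s-z}B^{z}\,\dd z$ and shifts contours to the right. The poles of $\Gamma(-z)$ at $z=k$ give exactly your coefficients $(-1)^k\RF{s}{k}/k!$. The difference is that the identity holds for \emph{all} $A,B>0$, so $\mathbf x$ and $\mathbf n$ are separated over the whole lattice before any angular cutoff is introduced. The $S$-block is then the complete sum $\sum_{\mathbf x\in\N^S}\prod_{\alpha\in R(S)}A_{\alpha,S}(\mathbf x)^{-s_\alpha-k_\alpha}$. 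Identifying its radial residue with $\cP_{S,\mathbf k}$ is the same problem as the full-support formula \eqref{eq:fullres}, now for a decoupled lower-rank block: interior pieces are handled by Lemma~\ref{lem:interior-sector}, and face pieces are assigned to proper subsupports. Your treatment of $S=I$ coincides with the paper's.

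In your version, the expansion in $B_{\alpha,S}(\mathbf n)/A_{\alpha,S}(\mathbf x)$ is valid only where $B<A$.
\begin{itemize}
\item In the interior cone $A_{\alpha,S}\asymp|\mathbf x|_1$, so this is manageable. The region $|\mathbf x|_1\lesssim|\mathbf n|_1$ is still not covered by Taylor's formula. There the remainder is holomorphic only after you place the $\Psi^+_{S^c}$ exponents in a convergence region, and you must then continue the identity along $H_{S,\ell}$. You should say this explicitly.
\item The real failure is near the faces of $\Sigma_S$. If $\mathbf x$ concentrates on $T\subsetneq S$, a crossing root whose support meets $S$ only in $S\setminus T$ has $A_{\alpha,S}(\mathbf x)\ll|\mathbf x|_1$. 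For example, take $m+n$ in type $A_3$ with $S=\{1,3\}$ and $T=\{3\}$. In the region $p\gg n\gtrsim m$ the series in $n/m$ diverges, so ``apply the same radial expansion inside each face sector'' fails as stated.
\end{itemize}

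To repair this you would have to show two things. First, regions such as $p\gg n\gtrsim m$, which belong to flags not containing $S$, contribute no $\Lambda_{S,\ell}^{-1}$ term. Second, the sector $p\gg m\gg n$ reproduces exactly the face part of $\FP\int_{\Sigma_S}$. In effect you would be rebuilding the wonderful-model sector decomposition on the lattice side. Lemma~\ref{lem:flag-canonicity} cannot substitute for this: it gives uniqueness of the coefficient, not its value.

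The missing idea is to separate the variables exactly first, as the paper does. The face analysis is then needed only for the decoupled $S$-block.
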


\begin{proof}
For every crossing root use the Mellin--Barnes identity
\[
 (A+B)^{-s}=\frac1{2\pi i}
 \int_{(c)}\frac{\Gamma(s+z)\Gamma(-z)}{\Gamma(s)}
 A^{-s-z}B^z\,\dd z,
 \qquad -\Re s<c<0.
\]
Shift the contours to the right.  The pole of $\Gamma(-z)$ at
$z=k\in\Nzero$ contributes
$(-1)^k\RF{s}{k}/k!$.  A multi-residue $\mathbf k$ therefore separates
an $S$-block from a polynomially weighted $S^c$-block, giving
\eqref{eq:Psk} and \eqref{eq:Wsk}.  The $S$-block is homogeneous of total
degree
\[
 \sum_{\alpha\in R(S)}s_\alpha+|\mathbf k|.
\]
Its interior radial integral has a simple pole when this total equals
$|S|$.  Proper boundary faces of the $S$-block are assigned to proper
subsupports by Theorem~\ref{thm:flag-normal-form}, so away from their hyperplanes
the residue is exactly the simplex period.  Summing the multi-residues
with $|\mathbf k|=\ell$ proves \eqref{eq:residue-general}.  The
Stirling estimate gives exponential decay of
$\Gamma(s_\alpha+z)\Gamma(-z)$ on every shifted vertical contour,
uniformly for $\mathbf s$ in compact sets avoiding the crossed poles.
Together with absolute convergence in the initial chamber and the
recursive continuation of the two separated blocks, this justifies the
finite contour shifts and the parameterwise residue extraction.

If $S=I$, there is no external block and no crossing root.  Apply
Lemma~\ref{lem:interior-sector} to a smooth projective partition of
unity.  Its interior terms have the single radial denominator
$\sum_\alpha s_\alpha-r$ and residue \eqref{eq:fullres}; every
remaining angular piece is assigned to a proper support by
Theorem~\ref{thm:flag-normal-form}.  Thus no generic $H_{I,\ell}$ with
$\ell\geq1$ survives.  This proves \eqref{eq:full}--\eqref{eq:fullres}
and the last assertion.
\end{proof}

\begin{theorem}[True generic polar divisor]\label{thm:true-locus}
For irreducible $\Phi$, the support of the polar divisor is exactly
\begin{equation}\label{eq:true-divisor}
 H_{I,0}\ \cup\!
 \bigcup_{\varnothing\ne S\subsetneq I}
 \ \bigcup_{\ell\geq0}H_{S,\ell}.
\end{equation}
Every displayed hyperplane is generically a simple pole.
\end{theorem}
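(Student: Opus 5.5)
The plan splits \eqref{eq:true-divisor} into an upper bound and a lower bound; the lower bound is the real content.

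\emph{Upper bound.} By Theorem~\ref{thm:flag-normal-form} the polar divisor is contained in \(\cA_\Phi\). By the last assertion of Theorem~\ref{thm:residue}, no \(H_{I,\ell}\) with \(\ell\ge1\) is a component. Hence the support lies in \eqref{eq:true-divisor}. Near a generic point of any \(H_{S,\ell}\), only the single denominator \(\Lambda_{S,\ell}\) appears in \eqref{eq:flag-normal-form}, since different carrier hyperplanes meet in codimension \(\ge2\) by Lemma~\ref{lem:flag-canonicity}. So every such pole is at most simple. It then suffices to show that each residue function \(\cR_{S,\ell}\), and the residue \eqref{eq:fullres}, is not identically zero on its hyperplane. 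Since these are holomorphic on a connected dense open subset of the hyperplane, it is enough to exhibit one point, or one analytic sub-slice, where they are nonzero.

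\emph{Unshifted cases.} For \(S=I\), take a real point with every \(s_\alpha=r/|\Psi^+|\). The integrand of \eqref{eq:fullres} is then a positive power of positive forms. Near a face \(\{u_i=0,\ i\in T\}\) the vanishing roots are those in \(R(T)\), with total exponent \(N(T)\,r/|\Psi^+|\). Convergence needs this to be less than the codimension \(|T|\). I would verify this in the form \(N(T)/|T|<|\Psi^+|/r\), or else perturb toward weights favouring \(S\)-roots with small support. The finite part is then an honest positive integral. For proper \(S\) and \(\ell=0\), use \eqref{eq:unshifted}. Choose the exponents on \(\Psi^+_{S^c}\) large real, so that \(Z_{\Phi_{S^c}}\) is a convergent positive series. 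Choose the exponents on \(R(S)\) positive and small with sum \(|S|\), so that \(\cP_{S,\mathbf0}\) is a convergent positive integral by the same face count.

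\emph{Shifted proper cases, \(\ell\ge1\).} This is the main obstacle, because \eqref{eq:residue-general} is a sum of several terms whose signs are not controlled. First I would cut the sum down to one term by restricting to a slice. Since \(\Phi\) is irreducible and \(S\) is proper, there are adjacent nodes \(i\in S\) and \(j\notin S\), so \(\alpha_0=\beta_i+\beta_j\in C(S)\). On the slice \(s_\alpha=0\) for all \(\alpha\in C(S)\setminus\{\alpha_0\}\), each factor \(\RF{0}{k_\alpha}\) vanishes for \(k_\alpha\ge1\). Hence only \(\mathbf k=\ell e_{\alpha_0}\) survives, giving
\[
 \cR_{S,\ell}=(-1)^\ell\frac{\RF{s_{\alpha_0}}{\ell}}{\ell!}\,\cP_{S,\ell e_{\alpha_0}}\,\cW_{S,\ell e_{\alpha_0}}.
\]
This slice meets \(H_{S,\ell}\) generically in the sense of Lemma~\ref{lem:flag-canonicity}, since the remaining variables stay free. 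I would take \(s_{\alpha_0}=1-\ell-\delta\) with \(0<\delta<1\), so that \(\RF{s_{\alpha_0}}{\ell}\ne0\). The exponent of \(A_{\alpha_0,S}=u_i\) in \(\cP\) is then \(1-\delta<1\), which is integrable at \(u_i=0\). The remaining \(S\)-supported exponents are spread positively with total \(|S|-1+\delta\). I expect the face-by-face integrability count to need the most care here, and would tune the weights toward small-support roots as above. If it fails, I would fall back on Lemma~\ref{lem:uniform-mellin} and show that the finite part is a nonzero meromorphic function by varying one exponent.

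Finally, \(\cW_{S,\ell e_{\alpha_0}}\) has positive weights \(B_{\alpha_0,S}^{\ell}\) and \(B^{0}=1\), so with large complementary exponents it is a convergent positive series. The product is therefore nonzero, which proves that every displayed hyperplane is a genuine simple pole.
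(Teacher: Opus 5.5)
\textbf{Gap in the shifted case.} Your upper bound and the reduction to exhibiting one nonvanishing point are fine. The shifted case \(\ell\ge1\) fails, however, whenever \(S\) is disconnected as a Dynkin subdiagram, which happens in every irreducible type of rank at least three (for example \(A_3\), \(S=\{1,3\}\)).

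On your slice, the only crossing form with nonzero exponent is \(A_{\alpha_0,S}=u_i\), and it lives on a single component of \(S\). Every root of \(\Psi^+_S\) has connected support, hence lies in one component. So the \(S\)-block integrand factors over the components \(S_1,\dots,S_p\) (\(p\ge2\)) into homogeneous pieces of degrees \(-a_c\), with \(\sum_c a_c=|S|\) on the carrier. Convergence at the face \(u_{S_c}=0\) needs \(a_c<|S_c|\) for every \(c\), and these conditions are incompatible. So the projective integral has no convergence region at your point; it is not a ``convergent positive integral''. Its canonical value carries the factor \(1/\Gamma(|S|-\sum_c a_c)=1/\Gamma(0)=0\), which is exactly the component gamma defect \eqref{eq:component-gamma}.

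Concretely, take \(A_3\), \(S=\{1,3\}\), \(\alpha_0=\beta_1+\beta_2\). Setting \(e=f=0\) in \eqref{eq:A3-R13} leaves \((-1)^\ell\zeta(b-\ell)\frac{(d)_\ell}{\ell!}B(1-a-d-\ell,1-c)\). On \(a+c+d=2-\ell\) the two Beta arguments sum to \(0\), so the residue vanishes identically along your slice. Neither your direct estimate nor the fallback via Lemma~\ref{lem:uniform-mellin} can then produce a nonzero value.

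The missing idea is that the surviving crossing root must couple all of \(S\). The paper uses the highest coroot \(\gamma\). Irreducibility gives \(\gamma\in C(S)\) and \(c_-\le A_{\gamma,S}\le c_+\) on the closed simplex. It then sets \(s_\gamma=\tfrac12\), \(s_{\beta_i}=1-\eta_i\) for \(i\in S\) with \(\sum_{i\in S}\eta_i=\ell+\tfrac12\), and all other \(R(S)\)-exponents to zero. The unique surviving term is then \((\tfrac12)_\ell/\ell!\), times a bounded positive multiple of the Dirichlet integral of \(\prod_i u_i^{-1+\eta_i}\), times a positive convergent complementary series. For connected \(S\) your slice can be repaired by putting weight on the highest root of \(\Psi^+_S\). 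For disconnected \(S\), no choice of the \(\Psi^+_S\)-exponents helps.

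\textbf{Error in the face count.} The count you use in the full-support and \(\ell=0\) cases is also wrong. Near the face \(\{u_i=0,\ i\in T\}\) of \(\Sigma_I\), the forms \(L_\alpha\) that vanish are those with \(\supp(\alpha)\subseteq T\), not those in \(R(T)\). The inequality \(N(T)/|T|<|\Psi^+|/r\) that you propose to verify is false for every proper \(T\); already for \(A_2\), \(T=\{1\}\), one has \(2>3/2\). The paper's identity \(\tfrac2h|R(T)|-|T|=\sum_a r_a(1-h_a/h)>0\) is exactly the reverse inequality.

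The correct condition at \(s_\alpha=2/h\) is \(\tfrac2h|\Psi^+_T|<|T|\). This is the Coxeter-number bound \(h_a<h\) applied to the components of \(T\), or equivalently the paper's inequality applied to \(T^c\). Your diagonal point is the right one, but the verification must be redone with the correct set of vanishing forms. The same correction applies to your \(\ell=0\) proper case, where the relevant forms are those \(\alpha\in R(S)\) with \(\varnothing\ne\supp(\alpha)\cap S\subseteq T\).
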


\begin{proof}
Containment follows from Theorem~\ref{thm:flag-normal-form}.  We give an explicit
nonvanishing point for every proper support, while keeping every factor
in the finite residue sum holomorphic.  Let \(d=|S|\), let
\(\gamma\) be the highest positive coroot, and let
\[
 \mathscr K_\ell=\{\mathbf k\in\mathbb N_0^{C(S)}:|\mathbf k|=\ell\}.
\]
Irreducibility implies that all simple-coroot coefficients of \(\gamma\)
are strictly positive, so \(\gamma\in C(S)\) and
\[
 0<c_-\le A_{\gamma,S}(\mathbf u)\le c_+<\infty
 \qquad(\mathbf u\in\overline{\Sigma_S}).
\]

Put \(t=\ell+\tfrac12\).  Choose positive numbers \(\eta_i\), \(i\in S\),
with \(\sum_{i\in S}\eta_i=t\), subject to the following finite genericity
condition.  For every \(\mathbf k\in\mathscr K_\ell\), avoid all
proper-subsupport pole hyperplanes of the reduced projective factor
\(\mathcal P_{S,\mathbf k}\).  Such a pole arising from
\(T\subsetneq S\) imposes an affine equation of the form
\[
 \sum_{i\in T}\eta_i=c_{T,\mathbf k,m},
\]
and cannot contain the whole positive simplex
\(\sum_{i\in S}\eta_i=t\).  Only finitely many integers \(m\) meet its
compact closure at the fixed level \(\ell\), so the required choice
exists.

Set
\[
 s_\gamma=\frac12,\qquad
 s_{\beta_i}=1-\eta_i\quad(i\in S),
\]
and set every other exponent in \(R(S)\) equal to zero.  Then
\[
 \sum_{\alpha\in R(S)}s_\alpha
 =d-t+\frac12=d-\ell,
\]
so the point lies on \(H_{S,\ell}\).  On the complementary subsystem
choose the simple-coroot exponents to be sufficiently large positive
generic real numbers.  Since every polynomial weight occurring for
\(\mathbf k\in\mathscr K_\ell\) has degree at most \(\ell\), one common
choice places every \(\mathcal W_{S,\mathbf k}\) in its absolute
convergence region.  Thus all factors in the finite sum
\eqref{eq:residue-general} are holomorphic at the chosen point; in
particular, no product is interpreted as \(0\cdot\infty\).

Every summand except \(k_\gamma=\ell\) now contains a factor
\((0)_{k_\alpha}\) with \(k_\alpha>0\), and hence vanishes.  The surviving
projective integrand is a bounded positive multiple of
\[
 \prod_{i\in S}u_i^{-1+\eta_i},
\]
whose Dirichlet integral converges because every \(\eta_i>0\).  Its
complementary series is absolutely convergent and strictly positive, and
\((\tfrac12)_\ell>0\).  Therefore the selected summand, and hence the
whole residue function, is finite and nonzero.  A small perturbation
inside \(H_{S,\ell}\) avoids all other carrier hyperplanes while
preserving this nonvanishing.  Thus the residue germ is not identically
zero.

For full support, take the diagonal point $s_\alpha=2/h$.  Since
$|\Psi^+|=rh/2$, it lies on $H_{I,0}$.  If
$\varnothing\ne T\subsetneq I$ and the complementary parabolic system
has irreducible components of ranks $r_a$ and Coxeter numbers $h_a<h$,
then
\[
 \frac{2}{h}|R(T)|-|T|
 =\sum_a r_a\left(1-\frac{h_a}{h}\right)>0.
\]
The resolved-face integrability criterion (the same strict
proper-parabolic estimate used in \cite{MatuzasLeading2026}) therefore
makes \eqref{eq:fullres} an ordinary finite positive simplex integral at this
point.  Thus $H_{I,0}$ is also genuinely polar.  Generic simplicity now
follows because a generic point of one hyperplane lies on no other
component of the locally finite arrangement.
\end{proof}

\begin{remark}
Theorem~\ref{thm:true-locus} concerns divisors at generic points.  At
intersections, compatible flag coefficients can have additional
zeros, including disconnected reciprocal-gamma zeros.  Thus the theorem
does not assert that the pole order at an intersection equals the number
of incident hyperplanes.
\end{remark}

\section{Residue zero loci and specialization thinning}

We call the disappearance of candidate polar components under a
specialization map specialization thinning.

The crossing-root Taylor product may be written in unequal-weight Bell
form.  Put
\[
 P_j=\sum_{\alpha\in C(S)}s_\alpha z_\alpha^j.
\]
Then
\[
 \prod_{\alpha\in C(S)}(1+t z_\alpha)^{-s_\alpha}
 =\exp\left(\sum_{j\ge1}\frac{(-1)^j}{j}P_jt^j\right)
 =\sum_{\ell\ge0}\mathbf B_\ell t^\ell,
\]
with
\begin{align*}
 \mathbf B_0&=1,&
 \mathbf B_1&=-P_1,\\
 \mathbf B_2&=\frac{P_1^2+P_2}{2},&
 \mathbf B_3&=-\frac{P_1^3+3P_1P_2+2P_3}{6},\\
 \mathbf B_4&=
 \frac{P_1^4+6P_1^2P_2+3P_2^2+8P_1P_3+6P_4}{24}.
\end{align*}
These polynomials are generically nonzero in every degree.  Hence there
is no generic off-diagonal parity thinning.

For a proper support, write the summands of
\eqref{eq:residue-general} as
\[
 T_{S,\mathbf k}(\mathbf s)=
 \left(\prod_{\alpha\in C(S)}
       \frac{(s_\alpha)_{k_\alpha}}{k_\alpha!}\right)
 \cP_{S,\mathbf k}(\mathbf s)\cW_{S,\mathbf k}(\mathbf s).
\]
On the generic part of \(H_{S,\ell}\), define the forced-zero set
\[
 \Zero^{\rm forced}_{S,\ell}
 =
 \bigcap_{|\mathbf k|=\ell}
 \left(
   \Zero^{\rm Poch}_{S,\mathbf k}
   \cup\Zero^{\rm per}_{S,\mathbf k}
   \cup\Zero^{\rm comp}_{S,\mathbf k}
 \right),
\]
where the three terms denote the zero sets of the Pochhammer product,
the canonical projective period, and the complementary weighted zeta
function.  Define the residual cancellation set by
\[
 \Zero^{\rm cancel}_{S,\ell}
 =
 \left\{\mathbf s:
 \sum_{|\mathbf k|=\ell}T_{S,\mathbf k}(\mathbf s)=0,\quad
 \text{not every }T_{S,\mathbf k}(\mathbf s)=0\right\}.
\]

\begin{proposition}[Zero locus of a generic residue]\label{prop:zero-locus}
Away from poles of the individual factors, the set-theoretic zero locus on
\(H_{S,\ell}\) is exactly
\[
 \Zero\!\left(
 \Res_{H_{S,\ell}}Z_\Phi\right)
 =
 \Zero^{\rm forced}_{S,\ell}
 \cup\Zero^{\rm cancel}_{S,\ell}.
\]
The Pochhammer part is an explicit union of affine hyperplanes.
The projective and complementary factors give a structural
factorization; their complete zero loci, and the associated
multiplicities, need not be known in general.
\end{proposition}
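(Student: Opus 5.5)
The identity is essentially set-theoretic once Theorem~\ref{thm:residue} is in hand. I will work on the generic part of \(H_{S,\ell}\) and away from poles of the individual factors \((s_\alpha)_{k_\alpha}\), \(\cP_{S,\mathbf k}\), \(\cW_{S,\mathbf k}\). There, \eqref{eq:residue-general} expresses the residue as \((-1)^\ell\sum_{|\mathbf k|=\ell}T_{S,\mathbf k}(\mathbf s)\), a finite sum of holomorphic terms. The sign \((-1)^\ell\) does not affect vanishing, so the zero locus of the residue is exactly the zero locus of \(\sum_{|\mathbf k|=\ell}T_{S,\mathbf k}\). The finiteness of the index set \(\{\mathbf k:|\mathbf k|=\ell\}\) comes directly from \(C(S)\) being finite.

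I will then split the zeros \(\mathbf s\) of this sum into two disjoint cases. In the first case every \(T_{S,\mathbf k}(\mathbf s)=0\). Each \(T_{S,\mathbf k}\) is a product of three holomorphic factors, namely the Pochhammer product divided by \(\prod k_\alpha!\), the period \(\cP_{S,\mathbf k}\), and the complementary series \(\cW_{S,\mathbf k}\). At a point where all three are finite, the product vanishes iff one of the factors vanishes. So \(T_{S,\mathbf k}(\mathbf s)=0\) iff \(\mathbf s\in\Zero^{\rm Poch}_{S,\mathbf k}\cup\Zero^{\rm per}_{S,\mathbf k}\cup\Zero^{\rm comp}_{S,\mathbf k}\). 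Intersecting over \(\mathbf k\) gives exactly \(\Zero^{\rm forced}_{S,\ell}\). In the second case not every term vanishes, yet the sum is zero, which is precisely the definition of \(\Zero^{\rm cancel}_{S,\ell}\). Conversely, any point of either set is a zero of the sum. This gives the claimed equality.

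For the Pochhammer part, \((s_\alpha)_{k_\alpha}=\prod_{j=0}^{k_\alpha-1}(s_\alpha+j)\). Its zero set is therefore the union of the hyperplanes \(s_\alpha=-j\), \(0\le j<k_\alpha\), over \(\alpha\) with \(k_\alpha>0\). The factorials are nonzero, so they do not contribute. Intersected with \(H_{S,\ell}\), these hyperplanes form an explicit affine union.

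The last sentence of the statement is a disclaimer, not an assertion, so it needs no argument beyond noting that the factorization \(T_{S,\mathbf k}=(\text{Poch})\cdot\cP_{S,\mathbf k}\cdot\cW_{S,\mathbf k}\) is the structural statement. The only step needing care is the hypothesis ``away from poles of the individual factors''. I will make sure the equivalence ``product is zero iff some factor is zero'' is only used where every factor is finite, so that no product of the form \(0\cdot\infty\) arises, as in the proof of Theorem~\ref{thm:true-locus}. This is where a careless argument could go wrong, but the stated restriction handles it directly.
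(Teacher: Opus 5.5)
Your proposal is correct and follows the paper's own proof: both read the zero locus directly off the finite sum \eqref{eq:residue-general} and split into the case where every summand \(T_{S,\mathbf k}\) vanishes (the forced set, via ``a product of finite factors is zero iff some factor is zero'') and the case of cancellation among summands that are not all zero. Your explicit description of the Pochhammer zeros as the hyperplanes \(s_\alpha=-j\) with \(0\le j<k_\alpha\), and your care to avoid \(0\cdot\infty\), only spell out details the paper leaves implicit.
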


\begin{proof}
This is the exact finite sum \eqref{eq:residue-general}.  If every
summand has a zero, the residue vanishes for a forced factor reason.
Otherwise its vanishing is precisely cancellation among the nonzero
summands.  No further mechanism exists.
\end{proof}

\begin{proposition}[Sharp specialization criterion]\label{prop:thin}
Let \(\iota:\C^d\to\C^{|\Psi^+|}\) be affine and let \(K=\{L=0\}\)
be a source hyperplane.  Suppose the carrier forms associated with a compatible collection of
flags over a generic point of \(K\) pull back as
\[
 \iota^*\Lambda_v=c_vL,\qquad c_v\ne0.
\]
Then
\begin{equation}\label{eq:pullback-general}
 [L^{-m}]\,\iota^*Z_\Phi
 =
 \sum_{|\mathfrak F|\ge m}
 \frac{
   \left.
   \partial_L^{|\mathfrak F|-m}
   \iota^*C_{\mathfrak F}
   \right|_{L=0}
 }{(|\mathfrak F|-m)!\prod_{v\in\mathfrak F}c_v}.
\end{equation}
If only one generic divisor maps to \(K\), that divisor is lost exactly
when \(\iota(K)\) lies in its residue zero locus.  If several
compatible divisors coalesce, \(K\) is lost exactly when every
incidence-complete coefficient in \eqref{eq:pullback-general} vanishes.
\end{proposition}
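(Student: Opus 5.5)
The plan is to pull back the local flag normal form of Theorem~\ref{thm:flag-normal-form} along \(\iota\) and read off Laurent coefficients in \(L\). Near a generic point of \(K\), the image \(\iota(K)\) meets only the carriers of the compatible collection. So \eqref{eq:flag-normal-form} holds in a neighborhood, with finitely many flags \(\mathfrak F\) built from these carriers and with \(H\) holomorphic. Substituting \(\iota^*\Lambda_v=c_vL\) gives
\[
 \iota^*Z_\Phi=\iota^*H+\sum_{\mathfrak F}\frac{\iota^*C_{\mathfrak F}}{\prod_{v\in\mathfrak F}c_v}\,L^{-|\mathfrak F|}.
\]
The holomorphic term \(\iota^*H\) contributes no negative powers of \(L\). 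Flags containing a carrier not mapped to a multiple of \(L\) do not arise at a generic point, and this is where compatibility and genericity of the point are used.

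Next I would complete \(L\) to local affine coordinates \((L,\mathbf y)\) on \(\C^d\) near the chosen point of \(K\). Each \(\iota^*C_{\mathfrak F}\) is holomorphic, so Taylor expansion in \(L\) with \(\mathbf y\) fixed gives
\[
 \iota^*C_{\mathfrak F}=\sum_{j\ge0}\frac{\partial_L^j\iota^*C_{\mathfrak F}|_{L=0}}{j!}L^j .
\]
The coefficient of \(L^{-m}\) therefore collects the term \(j=|\mathfrak F|-m\) from every flag with \(|\mathfrak F|\ge m\), which is exactly \eqref{eq:pullback-general}. The result is independent of presentation by Lemma~\ref{lem:flag-canonicity}: the left side is a Laurent coefficient of a well-defined meromorphic germ. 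This holds even though the individual \(C_{\mathfrak F}\) may be redistributed between resolutions.

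The two loss statements follow from this formula. \(K\) is a polar component of \(\iota^*Z_\Phi\) if and only if some coefficient \([L^{-m}]\iota^*Z_\Phi\) with \(m\ge1\) is not identically zero on \(K\). Consider first a single generic divisor \(H_{S,\ell}\) mapping to \(K\). Then the only incident flag is that singleton, so the only polar coefficient is \(m=1\), and it equals \(\iota^*\cR_{S,\ell}|_{K}/c\) by Theorem~\ref{thm:residue}. The divisor is lost exactly when \(\iota(K)\) lies in the zero locus described in Proposition~\ref{prop:zero-locus}. Second, when several compatible divisors coalesce, \(K\) is lost exactly when every aggregate coefficient with \(m\ge1\) vanishes. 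Since each such coefficient is incidence-complete by construction, this gives the statement.

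The main obstacle is the first step: justifying that near a generic point of \(K\) the normal form contains only flags whose carriers all pull back to nonzero multiples of \(L\). Any other carrier must be nonvanishing on \(\iota\) of a neighborhood, or else identically zero on \(\iota(\C^d)\). My first approach is to use local finiteness of \(\cA_\Phi\) to exclude the finitely many carriers whose pullbacks cut \(K\) in a proper subset. Any remaining factor \(\Lambda_v^{-1}\) with nonvanishing pullback can then be absorbed into a holomorphic coefficient, after merging that flag with its subflag. This merging is permitted because only aggregate coefficients are asserted to be intrinsic.
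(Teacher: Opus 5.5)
Your proposal is correct and follows the paper's own proof: you pull back the local flag normal form \eqref{eq:flag-normal-form}, note that a flag of length \(k\) contributes \(L^{-k}\,\iota^*C_{\mathfrak F}/\prod_{v}c_v\), Taylor-expand the holomorphic coefficient in \(L\), and read the two loss statements as the one-divisor and coalescing specializations. Your extra step of absorbing factors \(\Lambda_v^{-1}\) of carriers that are nonvanishing at a generic point of \(\iota(K)\) into holomorphic coefficients, by merging each such flag with its subflag, is a detail the paper leaves implicit; it is consistent with the paper's position that only aggregate coefficients are intrinsic.
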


\begin{proof}
Pull back the local flag expansion \eqref{eq:flag-normal-form}.  A flag of length \(k\) contributes
\(L^{-k}(\iota^*C_{\mathfrak F})/\prod_vc_v\).
Taylor expansion of the holomorphic coefficient gives
\eqref{eq:pullback-general}.  The final two statements are the
one-divisor and collapsing-divisor specializations of this identity.
\end{proof}

The collapsing clause is nonvacuous: in the depth-three
Euler--Zagier pullback below, two \(A_3\) divisor families coalesce onto
the last-two-variable lines.  The incidence sum reduces to a Bernoulli
coefficient and removes exactly the parity-gap lines.

\section{Flag residues and local pole orders}
\label{sec:flags}

The generic divisor theorem does not determine the local Laurent
structure where several carrier hyperplanes meet.  We now record the
canonical normal form supplied by the maximal support wonderful model
and the correct incidence-complete order law.

\subsection{Nested supports and the maximal model}

\begin{definition}[Incident and contributing flags]
A \emph{decorated support flag} is a strict chain
\[
 \mathfrak F=((S_1,\ell_1),\ldots,(S_k,\ell_k)),
 \qquad S_1\subsetneq\cdots\subsetneq S_k.
\]
It is incident at \(\mathbf s_0\) if
\(\Lambda_{S_i,\ell_i}(\mathbf s_0)=0\) for all \(i\).  It is
contributing on a parameter germ if its aggregate reduced coefficient
\(C_{\mathfrak F}\) in \eqref{eq:flag-normal-form} is not the zero germ there.
\end{definition}

\begin{proposition}[Nested sets of the maximal building set are chains]
\label{prop:maximal-incidence}
For the maximal building set of the Boolean support lattice, a nested
set is a strict support chain.  Consequently the canonical denominator
products in \eqref{eq:flag-normal-form} are indexed by strict decorated flags, and
no such product contains two incomparable supports.  A simultaneous
degeneration involving incomparable supports is represented, after
refinement, by one or more strict-chain terms through a larger support
containing their join \(S\cup T\); any mixed coefficient in the
original dependent carrier coordinates is the aggregate of those chain
terms.
\end{proposition}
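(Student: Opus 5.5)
The plan is to use the standard description of nested sets for a building set (De Concini--Procesi, Feichtner--Kozlov) and specialize it to the Boolean support lattice of Lemma~\ref{lem:support-lattice}. The intersection lattice of the support-subspace arrangement is the Boolean lattice of nonempty supports, ordered by inclusion of supports; the order is reversed relative to subspaces, since \(E_S\cap E_T=E_{S\cup T}\). The maximal building set \(\mathcal G\) consists of all nonempty supports. A subset \(\mathcal N\subseteq\mathcal G\) is nested when, for every collection of pairwise incomparable members \(T_1,\ldots,T_p\) with \(p\ge2\), the join \(T_1\cup\cdots\cup T_p\) does not belong to \(\mathcal G\). In the maximal building set every nonempty join lies in \(\mathcal G\). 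Hence any two incomparable supports already violate nestedness, and a nested set must be totally ordered, that is, a strict chain. Conversely, a chain has no antichain of size at least two, so the condition is vacuous and every strict chain is nested. This gives the first sentence of the statement.

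Next I would transfer this to the denominators. By the proof of Theorem~\ref{thm:flag-normal-form}, the charts of the maximal wonderful model are indexed by nested sets, which we now know are strict chains \(S_1\subsetneq\cdots\subsetneq S_k\). Lemma~\ref{lem:chart-orders} supplies one boundary coordinate \(\rho_j\) per chain member. Lemma~\ref{lem:uniform-mellin} then produces denominators that are products of \(\Lambda_{S_j,\ell_j}\) over a subset \(J\) of a single chain, with each \(\rho_j\) occurring at most to first power. A subset of a chain is again a chain, so every denominator product is indexed by a strict decorated flag. No chart carries divisors for two incomparable supports, because \(D_S\cap D_T=\varnothing\) in the maximal model whenever \(S,T\) are incomparable. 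So no product contains such a pair.

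For the final clause, consider a parameter point \(\mathbf s_0\) lying on \(H_{S,\ell}\cap H_{T,\ell'}\) with \(S,T\) incomparable. Spatially, \(E_S\cap E_T=E_{S\cup T}\), and the maximal model replaces this stratum by the divisor \(D_{S\cup T}\) together with chains through it. The local contributions there are therefore expressed in charts indexed by chains that contain \(S\cup T\) or a larger support. The resulting denominators are the forms \(\Lambda_{U,\ell}\) along those chains. By Lemma~\ref{lem:flag-canonicity}, any mixed coefficient in the original carrier coordinates \(\Lambda_{S,\ell},\Lambda_{T,\ell'}\) equals the aggregate of these chain terms on any transverse slice. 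The reason is that all resolved presentations yield the same meromorphic germ, and its Laurent coefficients are intrinsic.

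I expect the main obstacle to be this last clause rather than the combinatorial heart. The carrier forms \(\Lambda_{S},\Lambda_T,\Lambda_{S\cup T}\) are not linearly related in general, so one must argue carefully that a term which looks like a product \(\Lambda_S^{-1}\Lambda_T^{-1}\) in naive coordinates is genuinely re-expressed, and not merely reinterpreted, through chain denominators. I would first try to handle this by appealing only to the uniqueness of aggregate Laurent coefficients on transverse slices, rather than claiming an identity of individual terms.
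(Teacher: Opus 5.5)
Your proposal is correct and follows essentially the paper's own route. The key step is the same: in the maximal building set every nonempty join of supports lies in the building set, so any incomparable pair violates nestedness. The face-poset theorem of De Concini--Procesi and Feichtner--Kozlov then transfers this to the denominator products. As in the paper, the final clause is handled by aggregating the strict-chain terms through the join divisor and invoking uniqueness of the aggregate Laurent coefficients. Your explicit check of the converse (every strict chain is nested) and your routing of the denominator claim through Lemmas~\ref{lem:chart-orders} and~\ref{lem:uniform-mellin} are somewhat more detailed than the paper's argument, but they add no new ingredient.
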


\begin{proof}
The maximal building set is the full Boolean lattice with its minimal
element removed.  The nested-set definition says that the join of any
incomparable subfamily must not belong to the building set.  Here every
nonempty join belongs to it, so no incomparable pair is nested.  The
face-poset theorem for wonderful models identifies the resulting
boundary strata with these nested sets
\cite{FeichtnerKozlov2004,DeConciniProcesi1995}.

The last statement concerns the canonical maximal-model expansion, not
an arbitrary partial-fraction presentation in the original parameter
coordinates.  A coarser chart may display incomparable boundary pieces,
but after refinement no single canonical denominator contains both:
their common degeneration is distributed among strict chains through
the join divisor.  Whether a mixed coefficient in the original carrier
coordinates survives is decided only after these chain contributions
are aggregated, or equivalently by an iterated residue when the relevant
carrier forms are independent.
\end{proof}

For a flag choose \(i_h\in S_h\setminus S_{h-1}\), with
\(S_0=\varnothing\), and use the simple-coroot exponents
\(s_{\beta_{i_h}}\) as transverse coordinates.  Then
\[
 \frac{\partial\Lambda_{S_j,\ell_j}}
      {\partial s_{\beta_{i_h}}}
 =\begin{cases}0,&j<h,\\1,&j\ge h.
 \end{cases}
\]
The triangular matrix has determinant one.  Thus the canonical
multi-residue of a flag is independent of the order in which these
normal coordinates are extracted.

\subsection{Relative face operators and flag residues}

Let \(U\subsetneq V\) be consecutive supports in a flag and put
\(\delta=\ell_V-\ell_U\).  At that stage the weighted complementary
input has a finite homogeneous decomposition
\[
 P(Ru,n)=\sum_{j,\mu}R^j
 P^{\rm in}_{j,\mu}(u)P^{\rm out}_{j,\mu}(n).
\]
Write \(C(U,V)\) for the crossing forms of the relative layer.  Expanding
such a form as \((RA_\alpha+B_\alpha)^{-s_\alpha}\), a Taylor
multiindex \(\mathbf k\) contributes the radial degree
\(j-|\mathbf k|\).  The relative carrier identity gives the balance
condition
\[
 |\mathbf k|=j+\delta.
\]
This yields the finite operator
\begin{equation}\label{eq:relative-operator}
\begin{aligned}
 \mathfrak R_{U,V,\delta}^{\mathbf s}[P]
 ={}&\sum_{j,\mu}
 \sum_{\substack{\mathbf k\in\Nzero^{C(U,V)}\\
                  |\mathbf k|=j+\delta}}
 (-1)^{|\mathbf k|}
 \prod_{\alpha\in C(U,V)}
 \frac{(s_\alpha)_{k_\alpha}}{k_\alpha!}\\
 &\qquad\times
 \cP^{\rm red}_{U,V;j,\mu,\mathbf k}(\mathbf s)
 \cW_{U,V;j,\mu,\mathbf k}(\mathbf s).
\end{aligned}
\end{equation}
We call this finite map the relative face operator for the two
consecutive supports.  Terms with \(j+\delta<0\) are absent.  The
projective factor is reduced recursively before specialization, and the
complementary factor is the polynomially weighted KMT series inherited
by the next stage.

\begin{theorem}[Recursive flag-residue formula]
\label{thm:flag-residue}
For every incident strict decorated flag \(\mathfrak F\), its canonical
flag residue is the finite expression obtained by applying
\eqref{eq:relative-operator} successively, beginning with the residue
formula on a proper support \eqref{eq:residue-general}.  Equivalently it is the aggregate
coefficient \(C_{\mathfrak F}\) in the maximal-model local form
\eqref{eq:flag-normal-form}.  All projective-link poles are assigned to proper
flag extensions before the coefficient is evaluated.
\end{theorem}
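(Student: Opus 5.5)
We will prove the statement by induction on the flag length \(k\), peeling off one support at a time from the innermost one. For \(k=1\) the claim is exactly Theorem~\ref{thm:residue}: the residue on \(H_{S_1,\ell_1}\) is the finite sum \eqref{eq:residue-general}. This sum is already written as a projective period \(\cP_{S_1,\mathbf k}\) times a polynomially weighted complementary series \(\cW_{S_1,\mathbf k}\). For the inductive step, assume the iterated residue along \((S_1,\ell_1),\ldots,(S_{j},\ell_{j})\) has been written as a finite sum of reduced projective factors times complementary series. These series are KMT sums over \(\N^{S_j^c}\) carrying a polynomial weight \(P\) inherited from the crossing Taylor monomials. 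We then compute the next residue along \(\Lambda_{S_{j+1},\ell_{j+1}}\), regarded as a function on the intersection of the previous carrier hyperplanes. Here we use the triangular normal coordinates \(s_{\beta_{i_h}}\) noted after Proposition~\ref{prop:maximal-incidence}, so that the order of extraction is immaterial and the \(j+1\)-st residue is taken in a variable on which the earlier carrier forms do not depend.

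The inductive step repeats the proof of Theorem~\ref{thm:residue} inside the complementary subsystem, with \(U=S_j\) and \(V=S_{j+1}\). First we decompose the inherited weight into bihomogeneous pieces \(P(Ru,n)=\sum R^jP^{\rm in}_{j,\mu}(u)P^{\rm out}_{j,\mu}(n)\), with radial variable on the block \(V\setminus U\). Next, for each relative crossing root \(\alpha\in C(U,V)\), we apply the Mellin--Barnes identity to \((RA_\alpha+B_\alpha)^{-s_\alpha}\) and shift contours. This produces the factors \((-1)^{k}(s_\alpha)_k/k!\) and separates an inner block from an outer block. The inner block is homogeneous of radial degree \(j-|\mathbf k|\) plus the relative sum of exponents. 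Its interior radial integral, via Lemma~\ref{lem:interior-sector} applied to the block, has a simple pole exactly on the relative carrier. Using the previous carrier identity \(\Lambda_{U,\ell_U}=0\), this pole is located on \(\Lambda_{V,\ell_V}=0\) precisely when \(|\mathbf k|=j+\delta\). The residue of the inner block is the reduced relative period \(\cP^{\rm red}_{U,V;j,\mu,\mathbf k}\), and the outer block is the weighted series \(\cW_{U,V;j,\mu,\mathbf k}\). Summing over these indices gives \eqref{eq:relative-operator}. The sum is finite because \(j\) is bounded by the degree of \(P\) and \(|\mathbf k|\) is then fixed. Terms with \(j+\delta<0\) have no admissible \(\mathbf k\). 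Uniform Stirling decay and Lemma~\ref{lem:uniform-mellin} justify taking residues parameterwise on compact sets, exactly as in the proof of Theorem~\ref{thm:residue}.

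To identify the result with \(C_{\mathfrak F}\), we compare with the chart computation of Lemma~\ref{lem:chart-orders}. On the maximal-model chart indexed by \(S_1\subsetneq\cdots\subsetneq S_k\), the coefficient of \(\prod_j\Lambda_{S_j,\ell_j}^{-1}\) is obtained by extracting the Taylor coefficient of \(\rho_1^{\ell_1}\cdots\rho_k^{\ell_k}\) and integrating over the remaining variables. Performing these integrations layer by layer reproduces the radial/projective splitting above. Lemma~\ref{lem:flag-canonicity} then shows that the coefficient is intrinsic at a generic flag point, so it equals the iterated residue just computed. By analytic continuation this identification extends to the full aggregate coefficient \(C_{\mathfrak F}\) of \eqref{eq:flag-normal-form}.

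The main obstacle is the reduction of the projective factors. A relative period \(\cP^{\rm red}\) may have poles along faces of the simplex \(\Sigma_{V\setminus U}\), and these must not be counted in \(C_{\mathfrak F}\). We plan to handle this as in the proof of Theorem~\ref{thm:flag-normal-form}. Since the resolution is global, every boundary pole of the projective link lies on a divisor \(D_T\) with \(U\subsetneq T\subsetneq V\). It is therefore carried by the longer strict flag obtained by inserting \((T,\ell_T)\), by Proposition~\ref{prop:maximal-incidence}. We define \(\cP^{\rm red}\) as the finite-part value with these contributions removed, and we must check that this subtraction is compatible at each stage of the recursion. We expect to verify this compatibility by appealing to the uniqueness of aggregate coefficients in Lemma~\ref{lem:flag-canonicity}.
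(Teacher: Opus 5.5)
Your proposal is correct and follows essentially the same route as the paper. You take the first residue via \eqref{eq:residue-general}, restrict the next carrier, repeat the Mellin--Barnes extraction in the inherited weighted complementary series with radial balance \(|\mathbf k|=j+\delta\), assign projective-link poles to longer strict flags, and identify the result with \(C_{\mathfrak F}\) through the uniqueness in Lemma~\ref{lem:flag-canonicity}. Your use of the triangular coordinates \(s_{\beta_{i_h}}\) only makes explicit what the paper leaves implicit, and the extra chart comparison is not needed, since intrinsic uniqueness alone identifies the iterated residue with \(C_{\mathfrak F}\) at generic flag points.
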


\begin{proof}
Take the residue at the first carrier.  Every summand is a canonical
projective period multiplied by a weighted complementary series.
Restrict the next carrier form to the first hyperplane and repeat the
Mellin--Barnes extraction.  The radial balance is exactly
\(|\mathbf k|=j+\delta\), so the result is
\eqref{eq:relative-operator}.  Iteration terminates after the length of
the flag.  Recursive child subtraction makes the remaining link
coefficients holomorphic, and uniqueness of the local normal form gives
the aggregate coefficient \(C_{\mathfrak F}\).
\end{proof}

\subsection{Component gamma defects}

Suppose a relative layer separates into connected components \(\nu\).
Let \(d_\nu\) be the number of relative coordinates,
\(\sigma_\nu(\mathbf s)\) the sum of the relevant root exponents, and
\(\eta_\nu\) the net homogeneous insertion degree in that component.
The component-mass integration gives
\begin{equation}\label{eq:component-gamma}
 D(\mathbf s)=
 \frac{\prod_\nu\Gamma(A_\nu(\mathbf s))}
      {\Gamma(A(\mathbf s))},
 \qquad
 A_\nu=d_\nu+\eta_\nu-\sigma_\nu,
 \quad A=\sum_\nu A_\nu.
\end{equation}
We call this quotient the component gamma factor, and its vanishing
along a flag stratum a gamma defect.  For
consecutive decorated supports in a flag, radial balance gives
\begin{equation}\label{eq:flag-balance}
 A(\mathbf s)=
 -\bigl(\Lambda_{V,\ell_V}(\mathbf s)
        -\Lambda_{U,\ell_U}(\mathbf s)\bigr).
\end{equation}
Hence the total gamma argument vanishes on the flag stratum.  If all
numerator arguments avoid nonpositive integers, the reciprocal gamma
factor produces a simple zero.  If
\(A(t)=-M+ut+O(t^2)\) along a transverse curve and all numerator gamma
values are finite, then
\begin{equation}\label{eq:gamma-derivative}
 D'(0)=(-1)^M M!\,u\prod_\nu\Gamma(A_\nu(0)).
\end{equation}
If one numerator argument also satisfies
\(A_j(t)=-m+vt+O(t^2)\) with \(v\ne0\), then
\begin{equation}\label{eq:gamma-escape}
 \lim_{t\to0}D(t)=
 (-1)^{M+m}\frac{M!}{m!}\frac{u}{v}
 \prod_{\nu\ne j}\Gamma(A_\nu(0)).
\end{equation}
Equation \eqref{eq:gamma-escape} removes the gamma zero; it does not by
itself prove that the full reduced coefficient is nonzero, because
projective, complementary-zeta, or arithmetic-moment factors may still
vanish.

\subsection{Aggregate Laurent coefficients}

Let \(\gamma(t)=\mathbf s_0+t\mathbf v\) be an affine curve through a
parameter point, transverse to every incident carrier, and write
\[
 c_w=(\Lambda_w\circ\gamma)'(0)\ne0.
\]
For an incident flag \(\mathfrak F\), put
\(\delta_{\mathfrak F}=\operatorname{ord}_{t=0}
 C_{\mathfrak F}(\gamma(t))\), with value \(+\infty\) for the zero germ.

\begin{theorem}[Aggregate Laurent coefficients and pole order]
\label{thm:aggregate-order}
For every integer \(m\ge1\), define
\begin{equation}\label{eq:aggregate-Am}
 \mathcal A_m(\gamma)=
 \sum_{|\mathfrak F|\ge m}
 \frac{
  \left.\dfrac{d^{|\mathfrak F|-m}}{dt^{|\mathfrak F|-m}}
  C_{\mathfrak F}(\gamma(t))\right|_{t=0}}
 {(|\mathfrak F|-m)!\prod_{w\in\mathfrak F}c_w}.
\end{equation}
Then
\[
 [t^{-m}]Z_\Phi(\gamma(t))=\mathcal A_m(\gamma),
 \qquad
 \operatorname{pord}_{t=0}Z_\Phi(\gamma(t))
 =\max\{m:\mathcal A_m(\gamma)\ne0\}.
\]
Here \(\max\emptyset:=0\); when every \(\mathcal A_m(\gamma)\) with
\(m\ge1\) vanishes, the pullback extends holomorphically across \(t=0\)
and the pole order is zero.
In particular,
\begin{equation}\label{eq:defect-upper}
 \operatorname{pord}_{t=0}Z_\Phi(\gamma(t))
 \le
 \max_{\mathfrak F}
 (|\mathfrak F|-\delta_{\mathfrak F})_+.
\end{equation}
Equality in \eqref{eq:defect-upper} holds exactly when the aggregate
\(\mathcal A_m\) at the maximal candidate order is nonzero.
\end{theorem}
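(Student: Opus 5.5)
The plan is to pull the local flag normal form \eqref{eq:flag-normal-form} of Theorem~\ref{thm:flag-normal-form} back along \(\gamma\) and expand each term in \(t\), exactly as in the proof of Proposition~\ref{prop:thin}.

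\emph{Local expansion.} Near \(\mathbf s_0\), write \(Z_\Phi=H+\sum_{\mathfrak F}C_{\mathfrak F}\prod_{w\in\mathfrak F}\Lambda_w^{-1}\). This is a finite sum, with \(H\) and every \(C_{\mathfrak F}\) holomorphic. Flags that are not incident at \(\mathbf s_0\) have denominators that do not vanish there, so they can be absorbed into \(H\). For an incident flag \(\mathfrak F\), each form \(\Lambda_w\) is affine and vanishes at \(\mathbf s_0\), so \(\Lambda_w(\gamma(t))=c_wt\) exactly, with \(c_w\neq0\) by transversality. Hence the \(\mathfrak F\)-term pulls back to
\[
t^{-|\mathfrak F|}\,C_{\mathfrak F}(\gamma(t))\Big/\prod_{w\in\mathfrak F}c_w .
\]
Expanding \(C_{\mathfrak F}(\gamma(t))\) by Taylor's formula, the coefficient of \(t^{-m}\) from this term is the \((|\mathfrak F|-m)\)-th Taylor coefficient divided by \(\prod_w c_w\), and it is present only when \(|\mathfrak F|\ge m\). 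Summing over incident flags gives \([t^{-m}]Z_\Phi(\gamma(t))=\mathcal A_m(\gamma)\). The term \(H\) contributes nothing to negative orders.

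\emph{Independence of the presentation.} The formula above is computed from a single chosen resolved presentation. Lemma~\ref{lem:flag-canonicity} shows that every such presentation yields the same one-variable meromorphic germ, so the Laurent coefficients, and therefore the \(\mathcal A_m\), are intrinsic even though the individual \(C_{\mathfrak F}\) may be redistributed between presentations. The pole order of a meromorphic germ in one variable is the largest \(m\) with nonzero \([t^{-m}]\). If no such \(m\) exists, the principal part is zero and the germ extends holomorphically; this is the convention \(\max\emptyset=0\).

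\emph{The bound \eqref{eq:defect-upper}.} The \(\mathfrak F\)-term equals \(t^{\delta_{\mathfrak F}-|\mathfrak F|}\) times a holomorphic unit when \(\delta_{\mathfrak F}<\infty\), and it is zero otherwise. So the contribution of \(\mathfrak F\) to \(\mathcal A_m\) vanishes whenever \(m>|\mathfrak F|-\delta_{\mathfrak F}\), because the required derivative of order \(|\mathfrak F|-m<\delta_{\mathfrak F}\) is zero. Consequently \(\mathcal A_m=0\) for every \(m>M:=\max_{\mathfrak F}(|\mathfrak F|-\delta_{\mathfrak F})_+\), which proves the inequality. The equality clause follows immediately: the pole order equals \(M\) if and only if \(\mathcal A_M\ne0\), since all higher aggregates vanish. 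When \(M=0\) the clause is read as saying the pullback is holomorphic.

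\emph{Main obstacle.} The only genuinely delicate point is the well-definedness step. The \(C_{\mathfrak F}\) are not canonical at higher intersections, so one must fix one presentation and check that \(\delta_{\mathfrak F}\) is evaluated within that same presentation. With that, the bound holds for any presentation, while the identity and the order formula are presentation-free by Lemma~\ref{lem:flag-canonicity}.
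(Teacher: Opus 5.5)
Your proposal follows the paper's proof: pull back the local flag normal form along \(\gamma\), use \(\Lambda_w(\gamma(t))=c_wt\), Taylor-expand each \(C_{\mathfrak F}(\gamma(t))\), and sum over all incident flags before reading off the largest nonzero coefficient. Your derivation of \eqref{eq:defect-upper} from \(\delta_{\mathfrak F}\) and your remark on presentation-independence only make explicit what the paper leaves implicit.

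One small slip: a non-incident flag can still have some of its carrier forms vanishing at \(\mathbf s_0\), so it cannot be absorbed wholesale into \(H\). Instead, move its nonvanishing factors \(\Lambda_w^{-1}\) into the holomorphic coefficient and reassign the term to its incident sub-chain, which is again a strict decorated flag.
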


\begin{proof}
Pull back \eqref{eq:flag-normal-form}.  A flag of length \(k\) contributes
\(t^{-k}(\prod c_w)^{-1}C_{\mathfrak F}(\gamma(t))\).  Taylor expansion
of its holomorphic reduced coefficient gives
\eqref{eq:aggregate-Am}.  Summing all flags before taking the largest
nonzero coefficient proves the order statement and the upper bound.
\end{proof}

The maximum in \eqref{eq:defect-upper} is not generally an equality:
terms arising from different flags can cancel after restriction to the
same one-dimensional slice.  Thus the canonical flag normal form and an
arbitrary partial-fraction expression in dependent parameter
coordinates must not be conflated.

On a bounded parameter neighborhood only finitely many carrier levels
and flags occur.  After excluding factor poles, every
\(\mathcal A_m\) is holomorphic.  The order is therefore locally
constant away from the locally finite analytic union of the zero sets
of these aggregate functions.  This is an analytic jump-set statement,
not a claim of Zariski constructibility.

\section{Type \texorpdfstring{$A_2$}{A2}: residues and intersections}
\label{sec:A2}

Put
\begin{equation}\label{eq:Tornheim}
 T(a,b,c)=\sum_{m,n\ge1}m^{-a}n^{-b}(m+n)^{-c}.
\end{equation}
The exact singular locus is classical
\cite{MatsumotoTsumura2006,MNOST2008}.  The root variables are ordered
as
\[
 (a,b,c)\longleftrightarrow
 (\alpha_1,\alpha_2,\alpha_1+\alpha_2).
\]

\begin{proposition}[Complete \(A_2\) residue functions]\label{prop:A2}
For every \(\ell\ge0\),
\begin{align}
 \Res_{a+c=1-\ell}T(a,b,c)
 &=(-1)^\ell\frac{(c)_\ell}{\ell!}\,\zeta(b-\ell),
 \label{eq:A2-left}\\
 \Res_{b+c=1-\ell}T(a,b,c)
 &=(-1)^\ell\frac{(c)_\ell}{\ell!}\,\zeta(a-\ell).
 \label{eq:A2-right}
\end{align}
The full-support residue is
\begin{equation}\label{eq:A2-full}
 \Res_{a+b+c=2}T(a,b,c)=B(1-a,1-b).
\end{equation}
These are all generic polar divisors and they are generically simple.
\end{proposition}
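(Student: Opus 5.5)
The plan is to specialize Theorem~\ref{thm:residue} and Theorem~\ref{thm:true-locus} to type \(A_2\), where everything can be computed by hand. Here \(r=2\) and \(\Psi^+=\{\beta_1,\beta_2,\beta_1+\beta_2\}\), so \(Z_{A_2}(a,b,c)=T(a,b,c)\) with \(L_{\beta_1}=m\), \(L_{\beta_2}=n\) and \(L_{\beta_1+\beta_2}=m+n\). There are three nonempty supports, \(\{1\}\), \(\{2\}\) and \(I\).

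\emph{Support \(S=\{1\}\).} We have \(R(S)=\{\beta_1,\beta_1+\beta_2\}\) and \(C(S)=\{\beta_1+\beta_2\}\), so \(\Lambda_{S,\ell}=a+c-1+\ell\). The index \(\mathbf k\) reduces to the single integer \(k=\ell\). Since \(\Sigma_S\) is the point \(u_1=1\) with mass one and \(A_{\alpha,S}(1)=1\) for both roots, the period is \(\cP_{S,\ell}=1\); no finite part is needed. The complementary system is \(\{\beta_2\}\) with \(B_{\beta_2}(n)=n\) and \(B_{\beta_1+\beta_2}(n)=n\). Hence \(\cW_{S,\ell}=\sum_n n^{-b}n^{\ell}=\zeta(b-\ell)\), continued meromorphically. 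Substituting into \eqref{eq:residue-general} gives \eqref{eq:A2-left}. Exchanging \(\beta_1\leftrightarrow\beta_2\) gives \eqref{eq:A2-right}.

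\emph{Full support.} By \eqref{eq:fullres} the residue on \(a+b+c=2\) is \(\FP\int_{\Sigma_I}u_1^{-a}u_2^{-b}(u_1+u_2)^{-c}\dd\mathbf u\). On the simplex \(u_1+u_2=1\) the last factor equals one. In the Dirichlet measure the integral becomes \(\int_0^1u^{-a}(1-u)^{-b}\dd u=B(1-a,1-b)\) for \(\Re a,\Re b<1\). Its canonical meromorphic value is the meromorphic Beta function. Its poles at \(a=1+j\) and \(b=1+j\) are exactly the traces of the proper-support carriers, as the flag calculus requires. Theorem~\ref{thm:residue} also shows that no \(H_{I,\ell}\) with \(\ell\ge1\) occurs.

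\emph{Completeness and genericity.} Theorem~\ref{thm:true-locus} already states that every listed hyperplane is a generic simple pole. In this case it can also be checked directly. For fixed \(\ell\), \((c)_\ell\zeta(b-\ell)\) is not identically zero on \(H_{\{1\},\ell}\), because \(b\) and \(c\) remain free coordinates there. Likewise \(B(1-a,1-b)\) is not identically zero on \(H_{I,0}\). Theorem~\ref{thm:flag-normal-form} confines the polar divisor to these carriers, so the list is complete.

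The only delicate step is normalization. The residue must be taken with respect to \(\Lambda\) itself, which matches \(a+c\) up to a constant, so there is no scaling factor. The singleton simplex must carry mass one, and the two-point simplex must carry Lebesgue measure in one coordinate. Both conventions are fixed in the text, so I expect no real obstacle beyond checking the sign \((-1)^\ell\) against the Mellin--Barnes expansion of \((m+n)^{-c}\) in powers of \(n/m\).
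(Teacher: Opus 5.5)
Your proposal is correct and is essentially the paper's argument. The paper expands \((m+n)^{-c}=m^{-c}\sum_{\ell}(-1)^\ell\frac{(c)_\ell}{\ell!}(n/m)^\ell\), reads the residue off the pole of \(\zeta(a+c+\ell)\) against \(\zeta(b-\ell)\), and uses the radial scaling \(m+n=r\) for the full face; your evaluations \(\cP_{\{1\},\ell}=1\), \(\cW_{\{1\},\ell}=\zeta(b-\ell)\) and the Beta integral from \eqref{eq:fullres} encode exactly this computation. The only difference is that you route it through Theorem~\ref{thm:residue} and Theorem~\ref{thm:true-locus}, as the paper itself does for \(A_3\), whereas the paper's direct expansion keeps the \(A_2\) regression check independent of the general machinery.
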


\begin{proof}
For \(m\to\infty\),
\[
 (m+n)^{-c}
 =
 m^{-c}\sum_{\ell\ge0}
 (-1)^\ell\frac{(c)_\ell}{\ell!}\frac{n^\ell}{m^\ell}.
\]
The pole of \(\zeta(a+c+\ell)\) gives \eqref{eq:A2-left};
the other formula is symmetric.  Simultaneous scaling
\(m+n=r\) gives \eqref{eq:A2-full}.
\end{proof}

\begin{corollary}[Exact \(A_2\) zero loci]\label{cor:A2-zero}
On \(a+c=1-\ell\), away from poles of the displayed factors,
the set-theoretic zero locus of the residue is
\[
 \bigcup_{j=0}^{\ell-1}\{c=-j\}
 \ \cup
 \{\,\zeta(b-\ell)=0\,\}.
\]
For \(\ell=0\), the first union is empty.  The trivial-zero part is the
explicit family
\[
 b=\ell-2m,\qquad m=1,2,\ldots,
\]
while the remaining components are the factual affine hyperplanes
\(b=\ell+\rho\), where \(\rho\) ranges over nontrivial zeros of
\(\zeta\); no assertion about their distribution is made.
The right family has the symmetric description with \(a\) in place of
\(b\).
\end{corollary}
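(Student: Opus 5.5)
The residue on \(a+c=1-\ell\) is given by \eqref{eq:A2-left}, the product of \((-1)^\ell(c)_\ell/\ell!\) and \(\zeta(b-\ell)\). I would use Proposition~\ref{prop:zero-locus} in its simplest form. In Theorem~\ref{thm:residue} with \(S=\{1\}\), the crossing set is \(C(S)=\{\alpha_1+\alpha_2\}\). So the sum over \(|\mathbf k|=\ell\) has exactly one term, and the cancellation set \(\Zero^{\rm cancel}_{S,\ell}\) is empty. The zero locus is therefore the union of the zero sets of the separate factors. The period factor is identically one: \(\Sigma_{\{1\}}\) is a point of mass one and \(A_{\alpha,S}\equiv1\) there. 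So it contributes no zeros, and only the Pochhammer factor and the complementary zeta factor need to be analysed.

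For the Pochhammer factor, \((c)_\ell=\prod_{j=0}^{\ell-1}(c+j)\) vanishes exactly on the hyperplanes \(c=-j\), \(0\le j\le\ell-1\). For \(\ell=0\) the product is empty, so this part of the union is empty. For the zeta factor I would use the standard description of the zeros of \(\zeta\) away from its pole at \(b-\ell=1\). The trivial zeros \(b-\ell=-2m\), \(m\ge1\), give \(b=\ell-2m\). The nontrivial zeros \(\rho\) give \(b=\ell+\rho\), taken as a factual set with no distributional claim. I would also note that \(a\) is determined by \(c\) on the hyperplane, so each condition cuts out an affine subvariety of \(H_{\{1\},\ell}\). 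The only excluded points are the pole \(b=\ell+1\), which is a factor pole and is excluded in the statement.

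The right family \(b+c=1-\ell\) follows from the symmetry \(T(a,b,c)=T(b,a,c)\) (swap \(m\) and \(n\)), together with \eqref{eq:A2-right}, which exchanges \(a\) and \(b\). I do not expect a real obstacle. The one point to state carefully is that the residue formula \eqref{eq:A2-left} holds as an identity of meromorphic functions on the whole hyperplane, not only at generic points. Then the phrase ``away from poles of the displayed factors'' makes the set-theoretic description exact, including at points where two of the listed components meet.
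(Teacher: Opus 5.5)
Your proposal is correct and matches the paper, which gives no separate argument and simply reads the corollary off the factored residue \eqref{eq:A2-left}: the single crossing root gives one Taylor term (so there is no cancellation) and the singleton period is one. The factor $(c)_\ell$ then gives the hyperplanes $c=-j$, the factor $\zeta(b-\ell)$ gives its trivial and nontrivial zeros, and the right family follows from the $m\leftrightarrow n$ symmetry. Your remark that \eqref{eq:A2-left} holds on the whole hyperplane by analytic continuation, so that excluding the factor pole $b=\ell+1$ is the only caveat, is the right level of care.
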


\begin{table}[ht]
\centering
\small
\begin{tabular}{@{}lll@{}}
\toprule
Published \(A_2\) family & carrier support & formula in this paper\\
\midrule
\(a+c=1-\ell\) & \(S=\{1\}\) & \eqref{eq:A2-left}\\
\(b+c=1-\ell\) & \(S=\{2\}\) & \eqref{eq:A2-right}\\
\(a+b+c=2\) & \(S=\{1,2\}\) & \eqref{eq:A2-full}\\
\bottomrule
\end{tabular}
\caption{Item-by-item \(A_2\) regression against the classical
Mordell--Tornheim theorem.}
\label{tab:A2-comparison}
\end{table}

\subsection{Depth two Euler--Zagier thinning}

The depth-two Euler--Zagier function is
\[
 \EZ,2(u,v)=T(u,0,v).
\]
On \(u+v=1-\ell\), formula \eqref{eq:A2-left} becomes
\[
 (-1)^\ell\frac{(v)_\ell}{\ell!}\zeta(-\ell).
\]
Positive even \(\ell\) disappear because \(\zeta(-2m)=0\).
For \(\ell=0\) and positive odd \(\ell\), the residue is generically
nonzero.  The other singleton family specializes to \(v=1-\ell\);
for every \(\ell\ge1\),
\[
 (1-\ell)_\ell=0.
\]
Together with the full line, this yields
\[
 v=1,\qquad
 u+v\in\{2,1,0,-2,-4,-6,\ldots\},
\]
the Akiyama--Egami--Tanigawa list \cite{AET2001}.

\subsection{A codimension-two calculation}

\begin{example}[The intersection \(H_{\{1\},0}\cap H_{\{1,2\},0}\)]
\label{ex:A2-intersection}
Set
\[
 x=a+c-1,\qquad y=a+b+c-2,
\]
and take a generic point of \(x=y=0\), away from the second singleton
family.  In the Mellin--Barnes representation, the moving pole
\(z=-x\) contributes
\[
 \frac{\Gamma(c-x)\Gamma(x)}{\Gamma(c)}\,\zeta(1+y)
 =
 \frac1{xy}+O(x^{-1})+O(y^{-1})+O(1).
\]
Independently, take the residue first on the full-support divisor
\(y=0\).  There
\[
 B(1-a,1-b)=B(c-x,x)=\frac1x+O(1),
\]
so division by the transverse form \(y\) gives the same double
coefficient.  Therefore the intersection has exact generic order two
and
\[
 T(a,b,c)
 =
 \frac1{xy}
 +O(x^{-1})+O(y^{-1})+O(1).
\]
This calculation is the basic consistency check for the
stratification in Theorem~\ref{thm:A2-strata}.
\end{example}

\begin{theorem}[Complete generic \(A_2\) codimension-two table]
\label{thm:A2-strata}
Put
\[
 A_\ell=a+c-1+\ell,\qquad
 B_m=b+c-1+m,\qquad
 F=a+b+c-2.
\]
At generic points of the indicated intersections,
\begin{align}
 [A_\ell^{-1}B_m^{-1}]T&=0,\label{eq:A2-AB}\\
 [A_\ell^{-1}F^{-1}]T
 &=(-1)^\ell\frac{(c)_\ell}{\ell!},\label{eq:A2-AF}\\
 [B_m^{-1}F^{-1}]T
 &=(-1)^m\frac{(c)_m}{m!}.\label{eq:A2-BF}
\end{align}
The three carrier families meet at
\[
 (a,b,c)=(1+m,1+\ell,-\ell-m).
\]
At that point the two nested mixed coefficients are
\[
 [A_\ell^{-1}F^{-1}]T
 =[B_m^{-1}F^{-1}]T
 =\binom{\ell+m}{\ell},
\]
whereas neither \(A_\ell^{-1}B_m^{-1}\) nor
\(A_\ell^{-1}B_m^{-1}F^{-1}\) occurs.  Simple-pole terms belonging to
the three individual divisors may of course also be present.
\end{theorem}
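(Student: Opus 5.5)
The plan is to compute every coefficient in the table by iterated residues along strict flags. By Lemma~\ref{lem:flag-canonicity}, at a generic point of a flag intersection the coefficient is the intrinsic multivariable residue, so it may be read off by first taking the residue on the smaller support and then on the larger one. The supports in type \(A_2\) are \(\{1\},\{2\},\{1,2\}\). By Proposition~\ref{prop:maximal-incidence}, the only strict flags are \(\{1\}\subset\{1,2\}\) and \(\{2\}\subset\{1,2\}\).

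First I will show that \([A_\ell^{-1}B_m^{-1}]T\) and any triple term vanish, working structurally. No canonical denominator contains the incomparable pair \(\{1\},\{2\}\), and there is no chain of length three. As a direct check independent of the resolution, I will take the residue on \(A_\ell=0\) and look for \(B_m\)-poles. By \eqref{eq:A2-left} this residue is \((-1)^\ell\frac{(c)_\ell}{\ell!}\zeta(b-\ell)\). Restricted to \(A_\ell=0\), the forms \(B_m=b+c-1+m\) and \(F=b-1-\ell\) (using \(a+c=1-\ell\)) have independent differentials, so \((b,c)\) serve as local coordinates. The Pochhammer factor is polynomial in \(c\), and \(\zeta(b-\ell)\) has only the pole \(b=\ell+1\), that is \(F=0\). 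Hence this residue has no \(B_m\)-pole, which gives \eqref{eq:A2-AB}, and symmetrically with \(a\) in place of \(b\) for the right family.

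Next I will compute \eqref{eq:A2-AF}. On \(A_\ell=0\), the function \(\zeta(b-\ell)=\zeta(1+F)\) has residue \(1\) in the coordinate \(F\), so \([A_\ell^{-1}F^{-1}]T=(-1)^\ell(c)_\ell/\ell!\). To confirm independence of the extraction order, I will redo the computation with \(F\) first, using \eqref{eq:A2-full}: on \(F=0\) one has \(A_\ell=\ell+1-b\) and \(c=1-a-\ell\). The factor \(\Gamma(1-b)\) in \(B(1-a,1-b)\) has a pole at \(1-b=-\ell\) with residue \((-1)^\ell/\ell!\) in the coordinate \(A_\ell\). Using \(\Gamma(1-a)/\Gamma(1-a-\ell)=(1-a-\ell)_\ell=(c)_\ell\), the other ratio reproduces the same coefficient; this is the consistency check already carried out for \(\ell=0\) in Example~\ref{ex:A2-intersection}. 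The triangular normal-coordinate remark following Proposition~\ref{prop:maximal-incidence} guarantees that the two orders agree. The formula \eqref{eq:A2-BF} follows by the \(a\leftrightarrow b\) symmetry.

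Finally, I will solve \(A_\ell=B_m=F=0\): adding the first two equations and subtracting the third gives \(c=-\ell-m\), hence \(a=1+m\) and \(b=1+\ell\). Substituting into \eqref{eq:A2-AF}, \((-\ell-m)_\ell=(-1)^\ell(\ell+m)!/m!\), so the coefficient equals \(\binom{\ell+m}{\ell}\), and symmetrically for \eqref{eq:A2-BF}. The main obstacle is the claim that the coefficients \eqref{eq:A2-AF} and \eqref{eq:A2-BF} remain valid at this non-generic triple point. I would handle it by noting that both are polynomial, hence entire, restrictions of the flag coefficients \(C_{\mathfrak F}\) in \eqref{eq:flag-normal-form}. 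Then I would invoke Theorem~\ref{thm:flag-normal-form} together with the absence of any length-three chain, so that the local normal form near the triple point contains only the two nested double terms, the simple terms, and a holomorphic remainder.
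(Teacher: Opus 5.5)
Your proposal is correct and follows the paper's proof. The paper also takes the residue on \(A_\ell=0\) from \eqref{eq:A2-left}, reads off \eqref{eq:A2-AF} and \eqref{eq:A2-AB} from the pole structure of \(\zeta(b-\ell)\), evaluates the Pochhammer factor at the triple point to get \(\binom{\ell+m}{\ell}\), and excludes the \(A_\ell^{-1}B_m^{-1}\) and triple products because the maximal model has no incomparable or length-three chains. Your \(F\)-first check through \(B(1-a,1-b)\), and your continuity argument for evaluating the holomorphic flag coefficients at the non-generic triple point, are extra justifications that the paper leaves implicit.
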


\begin{proof}
Take the residue on \(A_\ell=0\).  Its only possible pole along the
full-support pullback comes from
\(\zeta(b-\ell)\), and
\(F-A_\ell=b-1-\ell\).  This proves
\eqref{eq:A2-AF}; the right formula is symmetric.  Along
\(A_\ell=B_m=0\) neither singleton residue is generically singular,
so \eqref{eq:A2-AB} follows.  At the triple point,
\[
 (-1)^\ell\frac{(-\ell-m)_\ell}{\ell!}
 =\frac{(m+1)_\ell}{\ell!}
 =\binom{\ell+m}{\ell}.
\]
Finally the maximal support building set has no flag containing the two
incomparable singleton supports, and the support lattice has no strict
flag of length three.  Hence no triple denominator product occurs.
\end{proof}

\section{Type \texorpdfstring{$A_3$}{A3}: residues and flag coefficients}

Use the notation
\begin{equation}\label{eq:A3}
\begin{aligned}
 Z_{A_3}(a,b,c,d,e,f)
 =\sum_{m,n,p\geq1}&m^{-a}n^{-b}p^{-c}(m+n)^{-d}\\
 &\times(n+p)^{-e}(m+n+p)^{-f}.
\end{aligned}
\end{equation}
Matsumoto and Tsumura already proved meromorphic continuation and
determined this singular locus \cite{MatsumotoTsumura2006}.  The next
statement is an independent, face-symmetric reorganization of the locus
and its generic residues.

Let $T$ denote the meromorphic Mordell--Tornheim function
\eqref{eq:Tornheim}.

\begin{theorem}[The $A_3$ divisor and residue functions]\label{thm:A3}
The generic polar divisor of \eqref{eq:A3} consists of
\begin{align*}
 a+d+f&=1-\ell,&
 b+d+e+f&=1-\ell,&
 c+e+f&=1-\ell,\\
 a+b+d+e+f&=2-\ell,&
 b+c+d+e+f&=2-\ell,&
 a+c+d+e+f&=2-\ell,
\end{align*}
for $\ell\in\Nzero$, together with
\[
 a+b+c+d+e+f=3.
\]
The singleton residues are
\begin{align}
 \cR_{1,\ell}
 &=(-1)^\ell\!\sum_{u+v=\ell}
   \frac{\RF d u\RF f v}{u!v!}
   T(b-u,c,e-v),\label{eq:A3-R1}\\
 \cR_{3,\ell}
 &=(-1)^\ell\!\sum_{u+v=\ell}
   \frac{\RF e u\RF f v}{u!v!}
   T(a,b-u,d-v),\label{eq:A3-R3}\\
 \cR_{2,\ell}
 &=(-1)^\ell\!\sum_{u+v+w=\ell}
   \frac{\RF d u\RF e v\RF f w}{u!v!w!}
   T(a-u,c-v,-w).
 \label{eq:A3-R2}
\end{align}
The two-node residues are
\begin{align}
 \cR_{12,\ell}
 &=(-1)^\ell\zeta(c-\ell)
   \sum_{u+v=\ell}
   \frac{\RF e u\RF f v}{u!v!}
   B(1-a,1-b-e-u),\label{eq:A3-R12}\\
 \cR_{23,\ell}
 &=(-1)^\ell\zeta(a-\ell)
   \sum_{u+v=\ell}
   \frac{\RF d u\RF f v}{u!v!}
   B(1-b-d-u,1-c),\label{eq:A3-R23}\\
 \cR_{13,\ell}
 &=(-1)^\ell\zeta(b-\ell)
   \sum_{u+v+w=\ell}
   \frac{\RF d u\RF e v\RF f w}{u!v!w!}
   B(1-a-d-u,1-c-e-v).
 \label{eq:A3-R13}
\end{align}
Finally,
\begin{equation}\label{eq:A3-full}
 \Res_{a+b+c+d+e+f=3}Z_{A_3}
 =\FP\!\int_{\substack{x,y,z>0\\x+y+z=1}}
 x^{-a}y^{-b}z^{-c}(x+y)^{-d}(y+z)^{-e}\,\dd x\dd y.
\end{equation}
Every displayed residue is a nonzero meromorphic function on its
hyperplane.
\end{theorem}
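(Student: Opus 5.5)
The plan is to specialize Theorem~\ref{thm:residue} and Theorem~\ref{thm:true-locus} to \(\Phi=A_3\). Since \(A_3\) is simply laced, coroots and roots coincide. Under the variable ordering in \eqref{eq:A3}, the positive coroots are
\[
\alpha_1\leftrightarrow a,\quad \alpha_2\leftrightarrow b,\quad \alpha_3\leftrightarrow c,\quad \alpha_1+\alpha_2\leftrightarrow d,\quad \alpha_2+\alpha_3\leftrightarrow e,\quad \alpha_1+\alpha_2+\alpha_3\leftrightarrow f,
\]
with linear forms \(m,n,p,m+n,n+p,m+n+p\).

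\textbf{Step 1: the divisor list.} For each of the seven supports I will list \(R(S)\) and \(C(S)\):
\begin{itemize}
\item \(R(\{1\})=\{a,d,f\}\), with \(C=\{d,f\}\);
\item \(R(\{2\})=\{b,d,e,f\}\), with \(C=\{d,e,f\}\);
\item \(R(\{3\})=\{c,e,f\}\), with \(C=\{e,f\}\);
\item \(R(\{1,2\})=\{a,b,d,e,f\}\), with \(C=\{e,f\}\);
\item \(R(\{2,3\})=\{b,c,d,e,f\}\), with \(C=\{d,f\}\);
\item \(R(\{1,3\})=\{a,c,d,e,f\}\), with \(C=\{d,e,f\}\);
\item \(R(I)\) is everything.
\end{itemize}
Equation \eqref{eq:Lambda} then reproduces the displayed hyperplanes: right-hand side \(1-\ell\) for singletons, \(2-\ell\) for pairs, and \(3\) for full support. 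Theorem~\ref{thm:true-locus} says these are exactly the generic polar divisors, each generically simple, and that no \(H_{I,\ell}\) with \(\ell\ge1\) occurs.

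\textbf{Step 2: the residues.} I will evaluate \eqref{eq:Psk} and \eqref{eq:Wsk} case by case and insert them into \eqref{eq:residue-general}.

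For a singleton \(S\), \(\Sigma_S\) is a point of mass one and \(A_{\alpha,S}\equiv1\), so \(\cP_{S,\mathbf k}=1\). Everything then comes from \(\cW\).
\begin{itemize}
\item For \(S=\{1\}\), with outer variables \((n,p)\), the weights are \(B_d=n\) and \(B_f=n+p\). The series \(\cW\) is
\[
\sum n^{-b}p^{-c}(n+p)^{-e}\,n^{u}(n+p)^{v}=T(b-u,c,e-v),
\]
giving \eqref{eq:A3-R1}.
\item \(S=\{3\}\) is symmetric and gives \eqref{eq:A3-R3}.
\item For \(S=\{2\}\), the complementary subsystem \(\{1,3\}\) has no mixed root. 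The weights are \(m^u\), \(p^v\) and \((m+p)^w\), so \(\cW=T(a-u,c-v,-w)\), giving \eqref{eq:A3-R2}.
\end{itemize}

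For two-node supports, the complement is a single node.
\begin{itemize}
\item For \(S=\{1,2\}\), the series collapses to \(\sum_p p^{-c+u+v}=\zeta(c-\ell)\). On the simplex \(x+y=1\) the forms are \(x,\,y,\,x+y=1,\,y,\,x+y=1\), so the integrand is \(x^{-a}y^{-b-e-u}\). Its Dirichlet integral is \(B(1-a,1-b-e-u)\), giving \eqref{eq:A3-R12}.
\item \(S=\{2,3\}\) is symmetric and gives \eqref{eq:A3-R23}.
\item For \(S=\{1,3\}\), we have \(A_d=x\), \(A_e=z\) and \(A_f=x+z=1\). This gives \(\cP=B(1-a-d-u,\,1-c-e-v)\) and \(\cW=\zeta(b-\ell)\), with three Taylor indices \(u,v,w\), which is \eqref{eq:A3-R13}.
\end{itemize}

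Equation \eqref{eq:fullres} with \(L_\alpha(x,y,z)\), restricted to \(x+y+z=1\) so that \(L_f\equiv1\) and written in the chart \((x,y)\), gives \eqref{eq:A3-full}.

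Throughout, the Beta and Mordell--Tornheim values are the meromorphic continuations guaranteed by the finite-part convention. I would check at one point in a convergence chamber that the Dirichlet measure convention matches the Beta normalization.

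\textbf{Step 3: nonvanishing.} Each residue is a nonzero meromorphic function on its hyperplane. This follows directly from Theorem~\ref{thm:true-locus}, whose proof exhibits, on every \(H_{S,\ell}\), a point where the residue is finite and nonzero. As a sanity check, I will also verify the explicit case \(\ell=0\): for example \(\cR_{1,0}=T(b,c,e)\), which is positive in its convergence region.

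\textbf{Main obstacle.} The obstacle is bookkeeping rather than analysis. I must make sure the Pochhammer signs \((-1)^\ell\) and the exponent shifts \(-u,-v,+w\) enter consistently. Crossing roots contribute \(A^{-s-k}\) to the period and \(B^{k}\) to the series, and for \(\{2\}\) the root \(f\) has no complementary exponent, hence the \(-w\) in \(T(a-u,c-v,-w)\). I must also confirm that the set \(R(S)\setminus C(S)\) is correctly identified in each case.
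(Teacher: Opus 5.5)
Your proposal is correct and is essentially the paper's own proof: specialize Theorem~\ref{thm:residue} support by support and invoke Theorem~\ref{thm:true-locus} for genuineness, simplicity and nonvanishing of each residue. In the specialization, the singleton periods equal one, the pair periods reduce to Beta integrals, and the complementary series reduce to Mordell--Tornheim or Riemann zeta values. The only difference is that you carry out all seven support computations explicitly. The paper works only the cases $S=\{1\}$ and $S=\{1,2\}$ and says the remaining formulas follow by the same calculation.
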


\begin{proof}
Apply Theorem~\ref{thm:residue}.  For example, on $S=\{1\}$ the crossing
forms are $m+n$ and $m+n+p$; their Taylor indices $u,v$ leave the
weighted complement function $T(b-u,c,e-v)$, proving
\eqref{eq:A3-R1}.  On $S=\{1,2\}$ the crossing forms are $n+p$ and
$m+n+p$.  Their total external power is $p^\ell$, while the projective
integral is $B(1-a,1-b-e-u)$, proving \eqref{eq:A3-R12}.  The other
formulas follow by the same calculation.  Generic nonvanishing is
Theorem~\ref{thm:true-locus}.
\end{proof}

\subsection{Unshifted flags and the first order-three coefficient}

At level zero, the seven nonempty supports of \(A_3\) have twelve
comparable pairs, nine incomparable pairs, and six complete support
flags.  Taking residues of
\eqref{eq:A3-R1}--\eqref{eq:A3-full} gives the following complete
unshifted two-flag table.

\begin{table}[ht]
\centering
\small
\begin{tabular}{@{}ll@{\qquad}ll@{}}
\toprule
Flag & coefficient & Flag & coefficient\\
\midrule
\(\{1\}\subset\{1,2\}\)&\(\zeta(c)\)&
\(\{1\}\subset\{1,3\}\)&\(\zeta(b)\)\\
\(\{1\}\subset I\)&\(B(1-b,1-c)\)&
\(\{2\}\subset\{1,2\}\)&\(\zeta(c)\)\\
\(\{2\}\subset\{2,3\}\)&\(\zeta(a)\)&
\(\{2\}\subset I\)&\(0\)\\
\(\{3\}\subset\{1,3\}\)&\(\zeta(b)\)&
\(\{3\}\subset\{2,3\}\)&\(\zeta(a)\)\\
\(\{3\}\subset I\)&\(B(1-a,1-b)\)&
\(\{1,2\}\subset I\)&\(B(1-a,1-b-e)\)\\
\(\{2,3\}\subset I\)&\(B(1-b-d,1-c)\)&
\(\{1,3\}\subset I\)&\(B(1-a-d,1-c-e)\)\\
\bottomrule
\end{tabular}
\caption{All twelve unshifted two-flags in type \(A_3\).}
\label{tab:A3-flags}
\end{table}

The zero in the middle row is structural.  On \(H_{\{2\},0}\), the
residue is
\[
 T(a,c,0)=\zeta(a)\zeta(c),
\]
while the full-support pullback is \(a+c=2\), which is not a polar
hyperplane of this product at a generic point.

\begin{proposition}[Complete unshifted \(A_3\) flags]
\label{prop:A3-complete-flags}
Every complete support flag
\[
 \{i\}\subset J\subset I,
 \qquad |J|=2,
\]
has canonical order-three coefficient \(1\).  In particular, for
\(\{1\}\subset\{1,2\}\subset I\), the successive residues are
\[
 T(b,c,e)\longmapsto\zeta(c)\longmapsto1.
\]
The reflected and middle flags are analogous.
\end{proposition}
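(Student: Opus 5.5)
The plan is to compute the canonical order-three coefficient as an iterated residue in the normal coordinates singled out after Proposition~\ref{prop:maximal-incidence}. By Lemma~\ref{lem:flag-canonicity}, the three carrier forms of a complete flag have a triangular Jacobian of determinant one. So at a generic point of the triple intersection the coefficient of \(\Lambda_{S_1,0}^{-1}\Lambda_{S_2,0}^{-1}\Lambda_{I,0}^{-1}\) is an ordinary multivariable residue. Moreover, Theorem~\ref{thm:flag-residue} says it equals the coefficient obtained by taking residues successively along \(S_1\), then along the restriction of \(\Lambda_{S_2,0}\), then along the restriction of \(\Lambda_{I,0}\). In each case we start from the singleton residue at level \(\ell=0\), given by \eqref{eq:A3-R1}, \eqref{eq:A3-R3} or \eqref{eq:A3-R2}, which by \eqref{eq:unshifted} reduces to a single complementary zeta function.

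\emph{The flag \(\{1\}\subset\{1,2\}\subset I\).} On \(H_{\{1\},0}\), i.e.\ \(a+d+f=1\), the residue is \(T(b,c,e)\). The support \(\{1,2\}\) restricts to \(b+e=1\), which is the left singleton carrier of \(T(b,c,e)\) for the root pair \((b,e)\). Applying \eqref{eq:A2-left} with \(\ell=0\) gives residue \(\zeta(c)\); this matches Table~\ref{tab:A3-flags}. Next we restrict the full carrier: subtracting gives \(\Lambda_{I,0}-\Lambda_{\{1,2\},0}=c-1\), so the last residue is \(\Res_{c=1}\zeta(c)=1\). Each step uses the convention that \(\Lambda\) itself is the transverse coordinate. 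The differences of consecutive forms have unit coefficient in the chosen simple-coroot variable, so no Jacobian factor appears.

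\emph{The other flags.} The reflected flags \(\{3\}\subset\{2,3\}\subset I\), \(\{1\}\subset\{1,3\}\subset I\) and \(\{3\}\subset\{1,3\}\subset I\) go the same way, using \eqref{eq:A3-R3} and the symmetric formula \eqref{eq:A2-right}. For \(\{1\}\subset\{1,3\}\), the intermediate restriction is \(c+e=1\) applied to \(T(b,c,e)\). Here \(T(b,c,e)\) is the \(A_2\) function in variables \(n,p\) with exponent \(c\) on \(p\) and \(e\) on \(n+p\), so this is its second singleton carrier with residue \(\zeta(b)\). The difference to the full form is then \(b-1\), giving residue \(1\). The middle flags start from \(\cR_{2,0}=T(a,c,0)=\zeta(a)\zeta(c)\). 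Restricting \(\{1,2\}\) gives \(a-1\), with residue \(\zeta(c)\). Restricting the full form then leaves \(c-1\), with residue \(1\); the case \(\{2\}\subset\{2,3\}\) is symmetric. All six complete flags therefore give \(1\).

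\emph{Main obstacle.} The delicate step is justifying that the successive one-variable residues really compute the canonical aggregate coefficient \(C_{\mathfrak F}\). One concern is that contributions from other flags could feed into the same triple denominator. This is ruled out by Proposition~\ref{prop:maximal-incidence}: only chains appear, and a generic point of the triple intersection lies on no other carrier. We also need to check that the intermediate restricted forms are genuinely the carriers of the lower-rank functions rather than shifted ones. That check is the identity \(\Lambda_{S_{j+1},0}-\Lambda_{S_j,0}=\) (the relative unshifted carrier), verified directly from the lists in Theorem~\ref{thm:A3}. Finally, the middle-flag zero \(\{2\}\subset I\) in Table~\ref{tab:A3-flags} does not obstruct these computations, because it concerns a non-complete two-flag.
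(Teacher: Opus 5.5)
\textbf{Verdict: the route and all six values are right, but one justification step is false as stated, for the two middle flags only.} Your route is the paper's own. The proposition is supported there only by the flag-ordered chain \(T(b,c,e)\mapsto\zeta(c)\mapsto1\) and the remark that the other flags are analogous, and your six chains reproduce this correctly.

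\textbf{Where it fails.} The problem is how you dispose of your ``main obstacle'', together with the appeal to Lemma~\ref{lem:flag-canonicity} in your first paragraph. The triple intersection is generic for the four end flags, but not for the two middle ones. Root supports in \(A_3\) are connected, so \(R(\{1,2\})\cap R(\{2,3\})=R(\{2\})\). With \(x=\Lambda_{\{2\},0}\), \(y=\Lambda_{\{1,2\},0}\), \(w=\Lambda_{\{2,3\},0}\), one therefore has identically
\[
 \Lambda_{I,0}=y+w-x .
\]
Hence \(H_{\{2\},0}\cap H_{\{1,2\},0}\cap H_{I,0}\subset H_{\{2,3\},0}\), and this is the common locus of both middle flags. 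There the coefficient is not an ordinary multivariable residue, and the order of extraction matters. On \(x=0\) one has \(y=a-1\), \(w=c-1\), \(z:=\Lambda_{I,0}=y+w\), and \(\zeta(a)\zeta(c)=1/(yw)+O(y^{-1})+O(w^{-1})\). Taking \(\Res_{z=0}\) before \(\Res_{y=0}\) gives \(0\), whereas your order gives \(1\). That \(0\) is exactly the zero entry for \(\{2\}\subset I\) in Table~\ref{tab:A3-flags}, so that entry is not irrelevant to the middle flags, contrary to your last sentence.

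\textbf{How to repair it.}
\begin{itemize}
\item \emph{Resolve through the join.} Since \(\{1,2\}\) and \(\{2,3\}\) are incomparable, Proposition~\ref{prop:maximal-incidence} requires \(1/(yw)\) to be resolved through the join. Indeed \(1/(yw)=1/(yz)+1/(wz)\) on \(x=0\).
\item \emph{The values are determined.} After multiplying a chain expansion near the locus by \(xyzw\), the only terms of degree one in the independent coordinates \(x,y,w\) are \(C_{\{2\}\subset\{1,2\}\subset I}\,w+C_{\{2\}\subset\{2,3\}\subset I}\,y\). So the values of the two triple coefficients on the locus are determined by \(Z_{A_3}\), and the decomposition above shows both equal \(1\).
\item \emph{Flag order isolates one chain.} Extracting residues in flag order, smallest support first, picks out exactly the chosen chain's coefficient, because \(y\,C/(wz)\) restricts to \(0\) on \(y=0\).
\item \emph{Alternative.} You may instead drop the genericity and order-independence claims for these two flags. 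Then rely only on the flag-ordered prescription of Theorem~\ref{thm:flag-residue}, which is stated for every incident flag.
\end{itemize}
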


For later comparison, the shifted triple flag
\[
 H_{\{1\},\ell}\subset H_{\{1,2\},m}\subset H_{I,0}
\]
has coefficient
\begin{equation}\label{eq:A3-shifted-triple}
 \frac{(-1)^{\ell+m}}{m!}
 \sum_{u+v=\ell}
 \frac{(d)_u(f)_v(e-v)_m}{u!v!}.
\end{equation}
At \(\ell=m=0\) this is \(1\).  For a summand with \(u+v=\ell\), the ambient relation
\(b+e=1-(m-\ell)\) becomes
\((b-u)+(e-v)=1-m\) in its Mordell--Tornheim factor.  Taking that
level-\(m\) residue and then the \(c=1+m\) residue of the remaining zeta
factor gives \eqref{eq:A3-shifted-triple}.

\begin{proposition}[Disjoint and disconnected probes]
\label{prop:A3-probes}
At a generic point of
\(H_{\{1\},0}\cap H_{\{3\},0}\), the function \(Z_{A_3}\) has no
\(\Lambda_{\{1\},0}^{-1}\Lambda_{\{3\},0}^{-1}\) term.  The same is
true for the adjacent incomparable singleton pairs.

The flag \(H_{\{2\},0}\subset H_{I,0}\) is the smallest
multivariable example of a disconnected relative gamma defect.  Its
component-mass quotient is
\[
 B(1-a,1-c)=
 \frac{\Gamma(1-a)\Gamma(1-c)}{\Gamma(2-a-c)},
\]
which vanishes generically on \(a+c=2\).  The loci
\(1-a\in\mathbb Z_{\le0}\) or \(1-c\in\mathbb Z_{\le0}\) remove the
gamma zero, but do not alone guarantee a nonzero full flag coefficient.
\end{proposition}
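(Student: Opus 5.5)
The proposition makes two claims, and I would prove them separately using the recursive flag calculus of Theorem~\ref{thm:flag-residue} and the incidence rule of Proposition~\ref{prop:maximal-incidence}.

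\emph{Incomparable singleton pairs.} The mixed coefficient \(\Lambda_{\{1\},0}^{-1}\Lambda_{\{3\},0}^{-1}\) is a generic double coefficient, and the two carrier forms have independent differentials. So I would compute it as an iterated residue: first take \(\Res_{H_{\{1\},0}}\), then test the result for a pole along the pullback of \(H_{\{3\},0}\).
\begin{itemize}
\item By \eqref{eq:A3-R1} at \(\ell=0\), the first residue is \(T(b,c,e)\).
\item The carrier of \(\{3\}\) is \(c+e+f=1\). Its only candidate pole in \(T(b,c,e)\) is \(c+e=1\), which differs from \(c+e+f=1\) by the free variable \(f\).
\item At a generic point of the intersection \(f\neq0\), so \(T(b,c,e)\) is holomorphic there and the mixed coefficient vanishes.
\item By Proposition~\ref{prop:maximal-incidence} no canonical chain term contains both singletons anyway. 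The iterated residue confirms that the aggregated mixed coefficient in the original coordinates is also zero.
\end{itemize}
The adjacent pairs \(\{1\},\{2\}\) and \(\{2\},\{3\}\) work the same way:
\begin{itemize}
\item On \(H_{\{2\},0}\) the residue is \(T(a,c,0)=\zeta(a)\zeta(c)\), with poles only at \(a=1\) and \(c=1\).
\item The singleton carriers are \(a+d+f=1\) and \(c+e+f=1\). Generically these are displaced from \(a=1\) and \(c=1\) by \(d+f\) and \(e+f\), so no pole is met.
\end{itemize}

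\emph{Disconnected flag \(\{2\}\subset I\).} Here I would apply the relative face operator to \(U=\{2\}\), \(V=I\), \(\delta=0\).
\begin{itemize}
\item The relative layer consists of the coordinates \(m\) and \(p\), and the remaining roots \(m^{-a}\) and \(p^{-c}\) lie in separate components.
\item The component-mass integration \eqref{eq:component-gamma} has \(d_\nu=1\) and \(\eta_\nu=0\), giving \(A_1=1-a\) and \(A_2=1-c\). The quotient is therefore \(\Gamma(1-a)\Gamma(1-c)/\Gamma(2-a-c)=B(1-a,1-c)\).
\item Concretely, the full-support residue of \(\zeta(a)\zeta(c)\) is the simplex integral \(\int_0^1x^{-a}(1-x)^{-c}\dd x\), which is exactly this Beta function.
\item By \eqref{eq:flag-balance}, \(A=2-a-c\) vanishes on the flag stratum. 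Since \(1/\Gamma\) has a zero there while the numerator stays finite for generic \(a\), the coefficient vanishes. This matches the \(0\) entry of Table~\ref{tab:A3-flags}.
\item When \(1-a\in\mathbb Z_{\le0}\), \eqref{eq:gamma-escape} shows the limit is finite and nonzero for the gamma quotient itself. The accompanying warning after \eqref{eq:gamma-escape} gives the final clause: other factors, including contributions from longer flags such as \(\{2\}\subset\{1,2\}\subset I\) incident at those loci, must still be aggregated.
\end{itemize}

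\emph{Main obstacle.} The subtle step is justifying that the iterated residue equals the canonical mixed coefficient at generic points of incomparable intersections. I would handle this through the independence part of Lemma~\ref{lem:flag-canonicity}, applied after restricting to a transverse two-dimensional slice.
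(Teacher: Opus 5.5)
Your proposal is correct and follows the paper's argument. For the incomparable pairs you use iterated residues through the singleton residue functions, and for the flag \(\{2\}\subset I\) you use the component-mass quotient \eqref{eq:component-gamma}, whose denominator argument \(2-a-c\) vanishes on the stratum. One small point: for \(\{1\},\{3\}\), genericity must exclude \(f\in\mathbb Z_{\ge0}\), not only \(f=0\), because \(T(b,c,e)\) also has the shifted poles \(c+e=1-\ell\). The only difference in route is that for the adjacent pairs you take the residue on \(H_{\{2\},0}\) first and use \(\zeta(a)\zeta(c)\). The paper instead tests the incomparable singleton pullbacks against \(T(b,c,e)\) on \(H_{\{1\},0}\) and its reflection; since the two carrier forms have independent differentials, either order gives the same mixed coefficient.
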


\begin{proof}
On \(H_{\{1\},0}\), the residue is \(T(b,c,e)\).  The pullback of
\(H_{\{1,2\},0}\) is \(b+e=1\), a genuine Mordell--Tornheim pole,
whereas the pullbacks of the two incomparable singleton carriers are
not generic polar equations of \(T(b,c,e)\).  This proves the first
claim by iterated residues.  The disconnected statement follows from
\eqref{eq:component-gamma}; the denominator argument is zero on the
flag, while the numerator arguments are generically nonintegral.
\end{proof}

\subsection{The \texorpdfstring{\(A_3\)}{A3} singular hyperplanes}

Matsumoto and Tsumura use
\[
 (s_1,s_2,s_3,s_4,s_5,s_6)=(a,b,c,d,e,f).
\]
Their complete \(\mathfrak{sl}(4)\) list matches the carrier theorem
term by term:
\begin{table}[ht]
\centering
\small
\begin{tabular}{@{}lll@{}}
\toprule
Published hyperplane & support & equation here\\
\midrule
\(s_1+s_4+s_6=1-\ell\) & \(\{1\}\) & \(a+d+f=1-\ell\)\\
\(s_3+s_5+s_6=1-\ell\) & \(\{3\}\) & \(c+e+f=1-\ell\)\\
\(s_2+s_4+s_5+s_6=1-\ell\) & \(\{2\}\) & \(b+d+e+f=1-\ell\)\\
\(s_1+s_2+s_4+s_5+s_6=2-\ell\) & \(\{1,2\}\) &
 \(a+b+d+e+f=2-\ell\)\\
\(s_1+s_3+s_4+s_5+s_6=2-\ell\) & \(\{1,3\}\) &
 \(a+c+d+e+f=2-\ell\)\\
\(s_2+s_3+s_4+s_5+s_6=2-\ell\) & \(\{2,3\}\) &
 \(b+c+d+e+f=2-\ell\)\\
\(\sum_{i=1}^6s_i=3\) & full & \(a+b+c+d+e+f=3\)\\
\bottomrule
\end{tabular}
\caption{Theorem~\ref{thm:A3} against the 2006 \(A_3\) theorem.}
\label{tab:A3-comparison}
\end{table}

\subsection{Depth-three Euler--Zagier specialization}

We use Zhao's increasing-index convention
\[
 \zeta_{\mathrm{EZ},d}(s_1,\ldots,s_d)
 =\sum_{0<n_1<\cdots<n_d}
   n_1^{-s_1}\cdots n_d^{-s_d},
 \qquad \zeta_{\mathrm{EZ},0}=1.
\]
With the gap variables
\(m=n_1\), \(n=n_2-n_1\), and \(p=n_3-n_2\), the standard embedding is
\begin{equation}\label{eq:EZ3map}
 \iota(s_1,s_2,s_3)
 =(a,b,c,d,e,f)=(s_1,0,0,s_2,0,s_3).
\end{equation}
Put
\[
 t=s_2+s_3,\qquad w=s_1+s_2+s_3.
\]
For an integer \(M\ge2\), the Euler--Maclaurin recurrence of
Akiyama--Egami--Tanigawa may be written
\begin{equation}\label{eq:AET-rec}
\begin{aligned}
 \EZ,3(s_1,s_2,s_3)
={}&\frac{\EZ,2(s_1,t-1)}{s_3-1}
 -\frac12\EZ,2(s_1,t)\\
 &+\sum_{q=1}^{M-1}
 \frac{(s_3)_qB_{q+1}}{(q+1)!}\,
 \EZ,2(s_1,t+q)
 +H_M(\mathbf s),
\end{aligned}
\end{equation}
where \(H_M\) is holomorphic in the enlarged region obtained after
\(M\) shifts.

\begin{theorem}[Depth-three specialization]\label{thm:EZ3}
The singular hyperplanes of \(\EZ,3\) are exactly
\[
 s_3=1,\qquad
 t\in\{2,1,0,-2,-4,-6,\ldots\},\qquad
 w\in\mathbb Z_{\le3}.
\]
All are generically simple.  This is exactly the depth-three instance
of the Akiyama--Egami--Tanigawa theorem.
\end{theorem}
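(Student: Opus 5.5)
The plan is to work directly from the Akiyama--Egami--Tanigawa recurrence \eqref{eq:AET-rec}. I will use the $A_3$ carrier geometry only to fix the candidate list, and Proposition~\ref{prop:thin} only to interpret the thinning. Under the embedding \eqref{eq:EZ3map} one has $b=c=e=0$, and the seven carrier families of Theorem~\ref{thm:A3} pull back as follows:
\[
 \{3\}\mapsto s_3=1-\ell,\qquad
 \{2\}\mapsto t=1-\ell,\qquad
 \{2,3\}\mapsto t=2-\ell,
\]
\[
 \{1\}\mapsto w=1-\ell,\qquad
 \{1,2\},\{1,3\}\mapsto w=2-\ell,\qquad
 I\mapsto w=3 .
\]
Theorem~\ref{thm:flag-normal-form} therefore confines the singular locus to
\[
 s_3\in\mathbb Z_{\le1},\qquad t\in\mathbb Z_{\le2},\qquad w\in\mathbb Z_{\le3}.
\]
It remains to decide which of these lines survive. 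Throughout I use the depth-two facts from the $A_2$ section: $\EZ,2(u,v)=T(u,0,v)$ is singular only on $v=1$ and on $u+v\in\{2,1,0,-2,-4,\ldots\}$, and \eqref{eq:A2-right} at $\ell=0$ gives residue $\zeta(u)$ on $v=1$.

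\emph{Family $s_3$.} Pull back \eqref{eq:A3-R3}. Since $e=0$, only the term with $u=0$, $v=\ell$ survives, giving
\[
 (-1)^\ell\frac{(s_3)_\ell}{\ell!}\,T(s_1,0,s_2-\ell).
\]
Only one generic $A_3$ divisor maps to $s_3=1-\ell$, so the single-divisor clause of Proposition~\ref{prop:thin} applies. On $s_3=1-\ell$ we have $(1-\ell)_\ell=0$ for every $\ell\ge1$, so those lines disappear. For $\ell=0$ the residue is $T(s_1,0,s_2)=\EZ,2(s_1,s_2)$, which is generically nonzero, so $s_3=1$ survives.

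\emph{Family $t$.} Here the two divisor families $\{2\}$ and $\{2,3\}$ coalesce, so I read the poles off \eqref{eq:AET-rec}, taking $M$ large on a given compact set so that $H_M$ is holomorphic there.
\begin{itemize}
\item The term $\EZ,2(s_1,t-1)/(s_3-1)$ has a pole at $t=2$ with residue $\zeta(s_1)/(s_3-1)$.
\item The term $-\tfrac12\EZ,2(s_1,t)$ has a pole at $t=1$ with residue $-\tfrac12\zeta(s_1)$.
\item The $q$-th term, $1\le q\le M-1$, has a pole at $t=1-q$ with residue $(s_3)_qB_{q+1}\zeta(s_1)/(q+1)!$.
\end{itemize}
Each $t$-line receives exactly one contribution, so there is no cancellation. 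Since $B_{q+1}=0$ for even $q\ge2$, exactly the lines $t=2,1,0,-2,-4,\ldots$ remain, and their residues are generically nonzero because $\zeta(s_1)$ and $(s_3)_q$ are.

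\emph{Family $w$.} The poles in $w$ come from the $u+v$ lines of each $\EZ,2$ term, with $u=s_1$.
\begin{itemize}
\item The term $\EZ,2(s_1,t-1)$ has $u+v=w-1$, so it contributes $w\in\{3,2,1,-1,-3,\ldots\}$.
\item The terms $\EZ,2(s_1,t+q)$, $q\ge0$, have $u+v=w+q$, so each contributes $w\in\{2-q,\,1-q,\,-q,\,-2-q,\ldots\}$.
\end{itemize}
Together these cover every $w\in\mathbb Z_{\le3}$, but a fixed line $w=w_0$ now receives contributions from several $q$. This is the main obstacle: the contributions might cancel.

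I plan to rule out cancellation as follows. On $w=w_0$ the residue of the $q$-th term is $(s_3)_qB_{q+1}/(q+1)!$ times the depth-two residue $\rho_{q}$ of $\EZ,2$ on its $u+v$ line. By \eqref{eq:A2-left}, $\rho_q$ is a Pochhammer polynomial in $v=t+q$ times a value of $\zeta$ at a negative integer or $1$. On the line I take $s_3$ and $s_1$ as free coordinates; since $t=w_0-s_1$, the factor $\rho_q$ depends on $s_1$ only. The contributions then have pairwise distinct degrees $q$ in $s_3$, with the term $\EZ,2(s_1,t-1)/(s_3-1)$ supplying its own non-polynomial dependence. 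So any single term that is nonzero makes the sum not identically zero.

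For each $w_0$ I therefore need one surviving nonzero term:
\begin{itemize}
\item for $w_0=3,2,1$, the lowest-$q$ (or shifted) term, whose factor $\zeta(0)$ or $\zeta(-1)$ or the full-support Beta value is nonzero;
\item for more negative $w_0$, a term whose Akiyama--Egami--Tanigawa/Zhao coefficient has $\zeta$ at an odd negative integer, or $\zeta(0)$, chosen to avoid the trivial zeros.
\end{itemize}
Matching the parity bookkeeping here is where I expect care to be needed. If it fails, the fallback is to compare with Zhao's explicit residues.

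Finally, generic simplicity. Each surviving line carries a first-order residue, and a generic point of one line meets no other line of this locally finite family. Hence every surviving hyperplane is generically a simple pole.
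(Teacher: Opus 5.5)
Your plan follows the paper's proof. The \(s_3\)-lines with \(\ell\ge1\) are killed by \((s_3)_\ell\); each \(t\)-line receives exactly one term of \eqref{eq:AET-rec}, and \(B_{q+1}\) does the thinning; each \(w\)-line is protected from cancellation by the \(s_3\)-dependence of its contributions, with \(s_1\) as the other coordinate. Your non-cancellation step is slightly stronger than the paper's. The functions \(1/(s_3-1)\) and \((s_3)_q\), \(q\ge0\), are linearly independent over functions of \(s_1\), so the aggregate on \(w=w_0\) vanishes only if every individual term does, and any single nonzero term is a witness. The paper instead uses the top \(s_3\)-degree term: \(q_*=2-w_0\) for odd \(w_0\) (full depth-two line, residue \(1/(1-s_1)\)) and \(q_*=1-w_0\) for even \(w_0\) (residue \(\zeta(0)\)). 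In both cases \(q_*\) is odd, so \(B_{q_*+1}\neq0\).

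The one step you left open is the witness for \(w_0\le0\), and this is exactly where the paper's argument does its work. It closes at once with your formulation, without Zhao's formulas.
\begin{itemize}
\item If \(w_0\) is odd, the first term of \eqref{eq:AET-rec} lies on the depth-two line \(u+v=1-\ell\) with \(\ell=2-w_0\) odd. It contributes \(\frac{(-1)^\ell(w_0-1-s_1)_\ell\,\zeta(-\ell)}{\ell!\,(s_3-1)}\not\equiv0\).
\item If \(w_0\) is even, the \(q=0\) term lies on \(u+v=1-\ell\) with \(\ell=1-w_0\) odd. It contributes \(-\frac12\cdot\frac{(-1)^\ell(w_0-s_1)_\ell\,\zeta(-\ell)}{\ell!}\not\equiv0\).
\end{itemize}
One small correction: on the line \(u+v=2\) the depth-two residue is \(B(1-s_1,1)=1/(1-s_1)\), not a Pochhammer polynomial times a zeta value. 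This does not affect your argument, since it still depends only on \(s_1\).
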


\begin{proof}
The denominator in the first term of \eqref{eq:AET-rec} gives
\(s_3=1\), with generic residue \(\EZ,2(s_1,s_2)\).
Equivalently, the \(A_3\) \(S=\{3\}\) residue has the factor
\((s_3)_\ell\) on \(s_3=1-\ell\), so every \(\ell\ge1\) line vanishes.

For the last two variables, \(t=2\) comes from the vertical pole of
\(\EZ,2(s_1,t-1)\).  The term \(q=0\) gives \(t=1\), and the term
indexed by \(q\ge1\) gives
\[
 t=1-q,\qquad
 \Res=\frac{(s_3)_qB_{q+1}}{(q+1)!}
\]
up to the nonzero unit residue of the depth-two vertical line.
For even \(q\ge2\), \(B_{q+1}=0\); for \(q=0\) and odd \(q\), the
coefficient is generically nonzero.  Hence the last-two list is
\[
 2,1,0,-2,-4,\ldots.
\]
In the \(A_3\) carrier language, the families \(S=\{2\}\) and
\(S=\{2,3\}\) coalesce under \(\iota\); equation
\eqref{eq:AET-rec} is the incidence-complete sum and exhibits its
Bernoulli cancellation.

It remains to explain why the deepest family has no gaps.  On
$w=3$, the first term in \eqref{eq:AET-rec} contributes the nonzero
generic residue
\[
 \frac{1}{(s_3-1)(1-s_1)}.
\]
On $w=2$, the first two terms contribute
\[
 -\frac{1}{2(s_3-1)}-\frac{1}{2(1-s_1)},
\]
which is not the zero germ; no $q\ge1$ term is singular there.  For
$w\le1$, choose
\[
 q_*(w)=
 \begin{cases}
 2-w,&w\ \text{odd},\\
 1-w,&w\ \text{even}.
 \end{cases}
\]
Then $q_*(w)$ is a nonnegative odd integer, so
$B_{q_*(w)+1}\ne0$.  If $w$ is odd, the selected depth-two factor lies
on its full-support line $s_1+(t+q_*)=2$, whose generic residue is
$B(1-s_1,1)=1/(1-s_1)$.  If $w$ is even, it lies on the shifted
singleton line $s_1+(t+q_*)=1$, whose generic residue is
$\zeta(0)=-1/2$.  Both are nonzero germs.  Moreover
$(s_3)_{q_*(w)}$ has strictly larger degree in $s_3$ than every other
term contributing to the same total-sum line.  Hence its leading
coefficient cannot be cancelled, and the aggregate residue is not the
zero germ.  This proves every integer line $w\le3$ and explains the
asymmetry: the last-two family is controlled by one Bernoulli
coefficient, whereas the deepest family is an incidence sum with a
unique highest-degree nonzero term.
\end{proof}

\begin{table}[ht]
\centering
\small
\begin{tabular}{@{}lll@{}}
\toprule
Depth-three family & source in \eqref{eq:AET-rec} & thinning mechanism\\
\midrule
\(s_3=1\) & \(1/(s_3-1)\) & \((1-\ell)_\ell=0\) for shifts\\
\(t=2\) & shifted argument \(t-1=1\) & survives\\
\(t=1-q\) & \(B_{q+1}(s_3)_q\) & \(B_{2m+1}=0\), \(m\ge1\)\\
\(w\le3\) & several incident depth-two lines & highest-degree term survives\\
\bottomrule
\end{tabular}
\caption{Depth-three specialization of the carrier families.}
\label{tab:EZ3}
\end{table}

\subsection{Zhao's residue formulas}
\label{subsec:Zhao-comparison}

We compare the specialization calculus with Zhao's pole and residue
theorems \cite[Theorems~5 and~7]{Zhao2000}.  For depth \(d\), write
\[
 s_d(j)=s_j+\cdots+s_d,\qquad
 u_j=s_d(j)-d+j-1,
\]
and, for nonnegative integers \(a_{i+1},\ldots,a_d\), put
\[
 a_d(j)=a_j+\cdots+a_d,\qquad a_d(d+1)=0.
\]
With Bernoulli convention \(B_1=-1/2\), Zhao's nonvertical residue on
\[
 s_d(i)=d-i+2-l\qquad(1\le i<d,\ l\ge1)
\]
is
\begin{equation}\label{eq:Zhao-residue}
 \zeta(s_1,\ldots,s_{i-1})
 \sum_{\substack{a_{i+1},\ldots,a_d\ge0\\
                  a_{i+1}+\cdots+a_d=l-1}}
 \prod_{j=i+1}^{d}
 \frac{B_{a_j}\,\Gamma(a_d(j)+u_j)}
      {a_j!\,\Gamma(a_d(j+1)+u_j+1)}.
\end{equation}
The vertical residue at \(s_d=1\) is
\(\zeta(s_1,\ldots,s_{d-1})\).  In depth two, Zhao's parameter
\(l=1\) gives the full line \(u+v=2\), whereas the line
\(u+v=1-\ell\) corresponds to \(l=\ell+2\).  This explicit index shift
is used in the comparison below.

For depth two, \eqref{eq:Zhao-residue} gives the following exact
comparison.
\begin{table}[ht]
\centering
\small
\begin{tabularx}{\textwidth}{@{}l X X@{}}
\toprule
hyperplane & Zhao's coefficient & specialization coefficient\\
\midrule
\(v=1\) & \(\zeta(u)\) & \(\zeta(u)\)\\
\(u+v=2\) & \(B_0\Gamma(v-1)/\Gamma(v)=1/(v-1)\)
 & \(B(1-u,1)=1/(v-1)\)\\
\(u+v=1-\ell\) &
 \(B_{\ell+1}(v)_\ell/(\ell+1)!\) &
 \((-1)^\ell(v)_\ell\zeta(-\ell)/\ell!\)\\
\bottomrule
\end{tabularx}
\caption{Depth-two residue comparison.  The last two expressions agree by
\(\zeta(-\ell)=(-1)^\ell B_{\ell+1}/(\ell+1)\).}
\label{tab:Zhao2}
\end{table}

For depth three, Zhao's formula gives
\[
 \Res_{s_3=1}\EZ,3=\EZ,2(s_1,s_2),
\]
\[
 \Res_{t=2}\EZ,3=\frac{\zeta(s_1)}{s_3-1},
 \qquad
 \Res_{t=1-q}\EZ,3
 =\zeta(s_1)\frac{B_{q+1}(s_3)_q}{(q+1)!}.
\]
For the deepest family \(w=4-l\), \eqref{eq:Zhao-residue} becomes the
finite convolution
\begin{equation}\label{eq:Zhao3-deep}
 \sum_{a_2+a_3=l-1}
 \frac{B_{a_2}\Gamma(a_2+a_3+u_2)}
      {a_2!\Gamma(a_3+u_2+1)}
 \frac{B_{a_3}\Gamma(a_3+u_3)}
      {a_3!\Gamma(u_3+1)}.
\end{equation}
Expanding the two depth-two factors in \eqref{eq:AET-rec}, grouping
terms by \(a_2+a_3\), and using
\(\Gamma(z+n)/\Gamma(z)=(z)_n\) gives
\eqref{eq:Zhao3-deep} term by term.  Thus the specialization calculus
recovers both Zhao's residue values and the Akiyama--Egami--Tanigawa
survival pattern.

\section{Rank-two examples}

Besides the trivial \(A_1\) function \(\zeta(s)\), the irreducible
rank-two types are \(A_2\), \(B_2=C_2\), and \(G_2\).  Type \(A_2\)
was treated in Section~\ref{sec:A2}.  We now fix the conventions for the remaining
types and display all generic residue functions.

\subsection{The \texorpdfstring{\(C_2\)}{C2} convention and its dictionary}

Set
\begin{equation}\label{eq:C2}
 Z_{C_2}(a,b,c,d)=
 \sum_{m,n\ge1}m^{-a}n^{-b}(m+n)^{-c}(m+2n)^{-d}.
\end{equation}
In the sequential four-variable notation used in the later rank-two
papers, \eqref{eq:C2} is simply ordered as $(a,b,c,d)$.  In the
root-indexed notation of Komori--Matsumoto--Tsumura
\cite{KMT2010II}, the ordered factors are
\[
 (m_1+m_2)^{-s_1}m_2^{-s_2}m_1^{-s_{12-}}
 (m_1+2m_2)^{-s_{12+}},
\]
so our dictionary is
\[
 (a,b,c,d)=(s_{12-},s_2,s_1,s_{12+}).
\]
The \(B_2\) convention is obtained by reversing the Dynkin arrow and
permuting the two simple coordinates.

\begin{proposition}[Complete \(C_2\) residue functions]\label{prop:C2}
The generic polar divisor is
\[
 a+c+d=1-\ell,\qquad
 b+c+d=1-\ell\quad(\ell\ge0),\qquad
 a+b+c+d=2.
\]
The singleton residues are
\begin{align}
 \Res_{a+c+d=1-\ell}Z_{C_2}
 &=(-1)^\ell\zeta(b-\ell)
 \sum_{u+v=\ell}
 \frac{(c)_u(d)_v}{u!v!}2^v,
 \label{eq:C2m}\\
 \Res_{b+c+d=1-\ell}Z_{C_2}
 &=(-1)^\ell2^{-d}\zeta(a-\ell)
 \sum_{u+v=\ell}
 \frac{(c)_u(d)_v}{u!v!}2^{-v}.
 \label{eq:C2n}
\end{align}
The full-support residue is
\begin{equation}\label{eq:C2full}
 2^{-d}B(1-a,1-b)
 {}_2F_1\!\left(d,1-a;2-a-b;\frac12\right).
\end{equation}
\end{proposition}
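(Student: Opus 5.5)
We specialize Theorem~\ref{thm:residue} and Theorem~\ref{thm:true-locus} to the \(C_2\) coroot data, in the same way as in Proposition~\ref{prop:A2}. The positive roots in \eqref{eq:C2} have linear forms \(m\), \(n\), \(m+n\), \(m+2n\). Hence \(R(\{1\})=\{m,m+n,m+2n\}\) with \(C(\{1\})=\{m+n,m+2n\}\), and \(R(\{2\})=\{n,m+n,m+2n\}\) with \(C(\{2\})=\{m+n,m+2n\}\). The carrier forms are therefore \(a+c+d-1+\ell\), \(b+c+d-1+\ell\) and \(a+b+c+d-2\). Theorem~\ref{thm:true-locus} then states that all three families are genuine and generically simple, with \(I=\{1,2\}\) contributing only the unshifted line. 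This settles the divisor assertion. It remains only to evaluate the periods \(\cP_{S,\mathbf k}\) and the complementary series \(\cW_{S,\mathbf k}\) in \eqref{eq:residue-general}.

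For \(S=\{1\}\), the simplex \(\Sigma_S\) is the point \(u=1\) with mass one. Then \(A_{\alpha,S}(1)=b_1(\alpha)=1\) for all three roots, so \(\cP_{S,\mathbf k}=1\). The complementary side has \(B_{m+n}=n\) and \(B_{m+2n}=2n\), so with \(\mathbf k=(u,v)\) we get \(\cW=\sum_n n^{-b}n^u(2n)^v=2^v\zeta(b-\ell)\). The Pochhammer weights \((c)_u(d)_v/(u!v!)\) and the sign \((-1)^\ell\) then give \eqref{eq:C2m}.

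For \(S=\{2\}\) we have \(A_{m+n}=1\) and \(A_{m+2n}=2\). Thus \(\cP=1^{-c-u}\,2^{-d-v}\), which is the factor \(2^{-d}2^{-v}\). On the other side \(B_{m+n}=B_{m+2n}=m\), so \(\cW=\zeta(a-\ell)\), and this gives \eqref{eq:C2n}. An independent check is the direct expansion of \((m+n)^{-c}(m+2n)^{-d}=(2n)^{-d}n^{-c}(1+m/n)^{-c}(1+m/(2n))^{-d}\) for large \(n\), as in the proof of Proposition~\ref{prop:A2}.

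For the full support we use \eqref{eq:fullres} with \(\Sigma_I=\{x+y=1\}\) and Dirichlet measure \(\dd x\). The integrand becomes \(x^{-a}(1-x)^{-b}(x+2(1-x))^{-d}=x^{-a}(1-x)^{-b}2^{-d}(1-x/2)^{-d}\), since the \(c\)-factor equals one there. Euler's integral representation \({}_2F_1(d,\beta;\gamma;z)=\frac{1}{B(\beta,\gamma-\beta)}\int_0^1x^{\beta-1}(1-x)^{\gamma-\beta-1}(1-zx)^{-d}\dd x\), with \(\beta=1-a\), \(\gamma=2-a-b\) and \(z=1/2\), gives \eqref{eq:C2full}. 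This holds in the convergence range \(\Re a,\Re b<1\). The finite part elsewhere is the meromorphic continuation in \((a,b)\), and because \(z=1/2\) the hypergeometric factor is entire in its parameters apart from the \(\Gamma(\gamma)\) normalization.

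The main point requiring care is this last identification of \(\FP\) with the continued Beta--hypergeometric expression. It should follow from uniqueness of meromorphic continuation, together with the observation that the \(\ell\)-sums in the singleton residues are finite. A second place to watch is keeping the roles of \(a\) and \(b\) straight: \(a\) must pair with the node \(m\), and \(b\) with \(n\), whose coefficient is \(2\) in \(m+2n\).
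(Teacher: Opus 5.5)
Your proposal is correct and is essentially the paper's argument. The paper expands \((m+n)^{-c}(m+2n)^{-d}\) directly in powers of \(n/m\) (and symmetrically on the \(n\)-face) and reads off the pole of \(\zeta(a+c+d+\ell)\) against the complementary \(\zeta(b-\ell)\); this is exactly your instantiation of \(\cP_{S,\mathbf k}\) and \(\cW_{S,\mathbf k}\) in Theorem~\ref{thm:residue}. It then evaluates the full face by the same Euler integral, using \(m+2n=2r(1-x/2)\). Your appeal to Theorem~\ref{thm:true-locus} for the divisor statement, and your identification of \(\FP\) with the continued Beta--hypergeometric expression, only make explicit what the paper's proof leaves implicit.
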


\begin{proof}
For \(m\to\infty\), expand
\[
 (m+n)^{-c}(m+2n)^{-d}
 =
 m^{-c-d}
 \sum_{\ell\ge0}(-1)^\ell
 \sum_{u+v=\ell}
 \frac{(c)_u(d)_v}{u!v!}2^v
 \frac{n^\ell}{m^\ell}.
\]
This gives \eqref{eq:C2m}; the \(n\)-face gives
\eqref{eq:C2n}.  On the full radial face, write \(m=r x\),
\(n=r(1-x)\).  Since \(m+2n=2r(1-x/2)\), Euler's integral for
\({}_2F_1\) gives \eqref{eq:C2full}.
\end{proof}

\begin{table}[ht]
\centering
\small
\begin{tabular}{@{}lll@{}}
\toprule
KMT \(C_2\) family & our equation & residue\\
\midrule
first singleton & \(a+c+d=1-\ell\) & \eqref{eq:C2m}\\
second singleton & \(b+c+d=1-\ell\) & \eqref{eq:C2n}\\
full support & \(a+b+c+d=2\) & \eqref{eq:C2full}\\
\bottomrule
\end{tabular}
\caption{The fixed-variant dictionary and the three published \(C_2\)
singularity families.}
\label{tab:C2-comparison}
\end{table}

\subsection{The \texorpdfstring{\(G_2\)}{G2} function}

Use the standard ordered form
\begin{equation}\label{eq:G2}
\begin{aligned}
 Z_{G_2}(\mathbf s)
 =\sum_{m,n\ge1}
 &m^{-s_1}n^{-s_2}(m+n)^{-s_3}(m+2n)^{-s_4}\\
 &\times(m+3n)^{-s_5}(2m+3n)^{-s_6}.
\end{aligned}
\end{equation}
For \(\ell\ge0\), define
\begin{align}
 A^{(1)}_\ell(\mathbf s)
 &=
 \sum_{k_3+k_4+k_5+k_6=\ell}
 \frac{(s_3)_{k_3}(s_4)_{k_4}(s_5)_{k_5}(s_6)_{k_6}}
      {k_3!k_4!k_5!k_6!}\,
 2^{k_4}3^{k_5}\left(\frac32\right)^{k_6},
 \label{eq:G2A1}\\
 A^{(2)}_\ell(\mathbf s)
 &=
 \sum_{k_3+k_4+k_5+k_6=\ell}
 \frac{(s_3)_{k_3}(s_4)_{k_4}(s_5)_{k_5}(s_6)_{k_6}}
      {k_3!k_4!k_5!k_6!}\,
 2^{-k_4}3^{-k_5}\left(\frac23\right)^{k_6}.
 \label{eq:G2A2}
\end{align}

\begin{proposition}[Complete generic \(G_2\) residues]\label{prop:G2}
The generic polar divisor is
\begin{align*}
 s_1+s_3+s_4+s_5+s_6&=1-\ell,\\
 s_2+s_3+s_4+s_5+s_6&=1-\ell
 \qquad(\ell\ge0),\\
 s_1+s_2+s_3+s_4+s_5+s_6&=2.
\end{align*}
The singleton residues are
\begin{align}
 \Res_{H_{1,\ell}}Z_{G_2}
 &=(-1)^\ell2^{-s_6}
 A^{(1)}_\ell(\mathbf s)\,\zeta(s_2-\ell),
 \label{eq:G2R1}\\
 \Res_{H_{2,\ell}}Z_{G_2}
 &=(-1)^\ell2^{-s_4}3^{-s_5-s_6}
 A^{(2)}_\ell(\mathbf s)\,\zeta(s_1-\ell).
 \label{eq:G2R2}
\end{align}
The full-support residue is
\begin{equation}\label{eq:G2full}
\begin{aligned}
 &2^{-s_4}3^{-s_5-s_6}B(1-s_1,1-s_2)\\
 &\quad\times
 \FD^{(3)}\!\left(
 1-s_1;\ s_4,s_5,s_6;\ 2-s_1-s_2;\,
 \frac12,\frac23,\frac13\right).
\end{aligned}
\end{equation}
\end{proposition}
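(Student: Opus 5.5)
We apply Theorem~\ref{thm:residue} once for each of the three supports \(S=\{1\}\), \(S=\{2\}\) and \(S=I\); the genuineness and generic simplicity of all three families then follow from Theorem~\ref{thm:true-locus}. For \(G_2\) we have \(\Psi^+_{\{1\}^c}=\{\beta_2\}\) and \(\Psi^+_{\{2\}^c}=\{\beta_1\}\). In both singleton cases the crossing set \(C(S)\) consists of the four forms \(m+n\), \(m+2n\), \(m+3n\) and \(2m+3n\), with exponents \(s_3,s_4,s_5,s_6\). The carrier forms are \(\Lambda_{\{i\},\ell}=s_i+s_3+s_4+s_5+s_6-1+\ell\) and \(\Lambda_{I,0}=\sum_j s_j-2\), which gives the displayed divisor list. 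The remaining work is to evaluate \(\cP_{S,\mathbf k}\) and \(\cW_{S,\mathbf k}\) and to collect the constants.

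For \(S=\{1\}\), the simplex \(\Sigma_S\) is the single point \(u_1=1\) with mass one. Hence \(A_{\alpha,S}\) equals the \(m\)-coefficient of each form, namely \(1,1,1,2\). This gives
\[
\cP_{\{1\},\mathbf k}=2^{-s_6-k_6}.
\]
The external weights are \(B_{\alpha,S}(n)^{k_\alpha}=n^{k_3}(2n)^{k_4}(3n)^{k_5}(3n)^{k_6}\), so
\[
\cW_{\{1\},\mathbf k}=2^{k_4}3^{k_5+k_6}\,\zeta(s_2-\ell)\qquad\text{when }|\mathbf k|=\ell.
\]
Combining \(2^{-k_6}3^{k_6}=(3/2)^{k_6}\) and inserting the factor \((-1)^\ell\prod_\alpha(s_\alpha)_{k_\alpha}/k_\alpha!\) from \eqref{eq:residue-general} gives \eqref{eq:G2R1} with \(A^{(1)}_\ell\).

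For \(S=\{2\}\), the \(n\)-coefficients are \(1,2,3,3\), so
\[
\cP_{\{2\},\mathbf k}=2^{-s_4-k_4}3^{-s_5-k_5-s_6-k_6}.
\]
The \(m\)-coefficients \(1,1,1,2\) give
\[
\cW_{\{2\},\mathbf k}=2^{k_6}\,\zeta(s_1-\ell).
\]
The ratios collapse to \(2^{-k_4}3^{-k_5}(2/3)^{k_6}\), which yields \eqref{eq:G2R2}. As a sanity check, this calculation reduces at \(\ell=0\) to the \(C_2\) computation in Proposition~\ref{prop:C2} after deleting \(s_5\) and \(s_6\).

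For the full support we use \eqref{eq:fullres}. On \(\Sigma_I\) we parametrize by \(x\in(0,1)\) with \(y=1-x\); the measure convention makes this the Lebesgue measure \(\dd x\). The forms become
\[
m\mapsto x,\quad n\mapsto 1-x,\quad m+n\mapsto1,\quad
m+2n\mapsto 2(1-\tfrac x2),\quad m+3n\mapsto 3(1-\tfrac{2x}3),\quad 2m+3n\mapsto 3(1-\tfrac x3).
\]
Pulling out \(2^{-s_4}3^{-s_5-s_6}\), the remaining integral is
\[
\int_0^1x^{-s_1}(1-x)^{-s_2}\,(1-\tfrac x2)^{-s_4}(1-\tfrac{2x}3)^{-s_5}(1-\tfrac x3)^{-s_6}\,\dd x .
\]
Euler's integral for the Lauricella function, with \(a=1-s_1\) and \(c=2-s_1-s_2\), identifies this as \(B(1-s_1,1-s_2)\FD^{(3)}(1-s_1;s_4,s_5,s_6;2-s_1-s_2;\tfrac12,\tfrac23,\tfrac13)\), which is \eqref{eq:G2full}. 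This identity holds in the convergence range \(\Re s_1,\Re s_2<1\). There the finite part in \eqref{eq:fullres} is the ordinary integral, since all \(z_j\in(0,1)\) keep the \(\FD\) factors bounded on \([0,1]\). Both sides are meromorphic, so the identity extends to the whole hyperplane.

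The only point needing care is bookkeeping, not analysis. We must place each \(2\) and \(3\) correctly, either in the projective factor \(A_{\alpha,S}^{-s_\alpha-k_\alpha}\) or in the external weight \(B_{\alpha,S}^{k_\alpha}\). We also must use the normalization \(\Res_{\Lambda=0}\) with \(\Lambda\) itself as the transverse coordinate, so that no extra Jacobian factor appears. I expect the \(s_6\) root \(2m+3n\), whose two coefficients are both different from one, to be the main place where a sign or reciprocal error could occur. I would check it against the direct asymptotic expansion used in the proof of Proposition~\ref{prop:C2}, namely \(2m+3n=2m(1+\tfrac32\tfrac nm)\) on the \(m\)-face and \(2m+3n=3n(1+\tfrac23\tfrac mn)\) on the \(n\)-face.
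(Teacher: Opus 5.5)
Your proposal is correct and is essentially the paper's argument: the paper factors the crossing forms directly on each face (e.g.\ \(2m+3n=2m(1+3n/(2m))\) on the \(m\)-face and \(2m+3n=3n(1+2m/(3n))\) on the \(n\)-face), and on the full face it applies the Lauricella Euler integral with \(m=rx\), \(n=r(1-x)\), which is exactly your \(\cP_{S,\mathbf k}\), \(\cW_{S,\mathbf k}\) bookkeeping in Theorem~\ref{thm:residue} written out. Your placement of the constants \(2\) and \(3\), the Pochhammer weights, and the parameters \(a=1-s_1\), \(c=2-s_1-s_2\) all agree with the stated formulas.
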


\begin{proof}
The \(m\)-face factors
\(2m+3n=2m(1+3n/(2m))\), giving
\eqref{eq:G2A1} and \eqref{eq:G2R1}.  The \(n\)-face factors
\[
 m+2n=2n(1+m/(2n)),\quad
 m+3n=3n(1+m/(3n)),\quad
 2m+3n=3n(1+2m/(3n)),
\]
giving \eqref{eq:G2A2} and \eqref{eq:G2R2}.
For the full face put \(m=r x\), \(n=r(1-x)\); the three nontrivial
factors become \(1-x/2\), \(1-2x/3\), and \(1-x/3\).
The Euler integral for the Lauricella function gives
\eqref{eq:G2full}.
\end{proof}

Komori, Matsumoto and Tsumura proved meromorphic continuation and
listed exactly the three \(G_2\) families above as possible
singularities \cite{KMT2011G2}.  The generic-divisor theorem upgrades
that candidate list to genuine generic poles and
Proposition~\ref{prop:G2} supplies their residue functions.

\begin{table}[ht]
\centering
\small
\begin{tabular}{@{}llll@{}}
\toprule
Type & proper shifted families & full line & status\\
\midrule
\(A_1\) & none & \(s=1\) & classical\\
\(A_2\) & two & sum \(=2\) & exact identity\\
\(B_2=C_2\) & two & sum \(=2\) & exact identity\\
\(G_2\) & two & sum \(=2\) & candidate list upgraded generically\\
\bottomrule
\end{tabular}
\caption{Complete irreducible rank-two generic coverage in the fixed
variants.}
\label{tab:rank2}
\end{table}

\section{Types \texorpdfstring{\(B_3\)}{B3} and \texorpdfstring{\(C_3\)}{C3}}
\label{sec:B3C3}

\subsection{The nine-form polynomials and carrier data}

The \(B/C\) arrow convention is a recurrent source of errors.  We
therefore define the two objects by their nine linear forms, in the
normalization used by Au \cite[Section~9]{Au2025}.  Put
\begin{align}
 P_{B_3}(m_1,m_2,m_3)
={}&m_1m_2m_3(m_1+m_2)(m_2+m_3)(m_1+m_2+m_3)\notag\\
 &\times(2m_2+m_3)(2m_1+2m_2+m_3)(m_1+2m_2+m_3),
 \label{eq:B3poly}\\
 P_{C_3}(m_1,m_2,m_3)
={}&m_1m_2m_3(m_1+m_2)(m_2+m_3)(m_1+m_2+m_3)\notag\\
 &\times(m_2+2m_3)(m_1+m_2+2m_3)(m_1+2m_2+2m_3).
 \label{eq:C3poly}
\end{align}
The multivariable functions \(Z_{B_3}(\mathbf s)\) and
\(Z_{C_3}(\mathbf s)\) attach exponents \(s_1,\ldots,s_9\) to the
factors in the displayed order.  Their diagonal restrictions are
\(\xi_{B_3}(s)\) and \(\xi_{C_3}(s)\).  This displayed dictionary,
rather than an implicit Dynkin-arrow convention, fixes every formula
below.  The general \(B_r,C_r,D_r\) analytic framework is due to
Komori--Matsumoto--Tsumura \cite{KMT2010II}; their Part~III develops the
Weyl-functional-relation side \cite{KMT2012III}.

Both types have the same support-count ledger:
\begin{table}[ht]
\centering
\small
\begin{tabular}{@{}ccl@{}}
\toprule
support \(S\) & \(N(S)\) & carrier form \(\sum_{j\in R(S)}s_j\)\\
\midrule
\(\{1\}\)&5&\(s_1+s_4+s_6+s_8+s_9\)\\
\(\{2\}\)&7&\(s_2+s_4+s_5+s_6+s_7+s_8+s_9\)\\
\(\{3\}\)&6&\(s_3+s_5+s_6+s_7+s_8+s_9\)\\
\(\{1,2\}\)&8&\(\sum_{j=1}^9s_j-s_3\)\\
\(\{1,3\}\)&8&\(\sum_{j=1}^9s_j-s_2\)\\
\(\{2,3\}\)&8&\(\sum_{j=1}^9s_j-s_1\)\\
\(\{1,2,3\}\)&9&\(\sum_{j=1}^9s_j\)\\
\bottomrule
\end{tabular}
\caption{Exact \(B_3/C_3\) support-count ledger.}
\label{tab:B3C3-carrier}
\end{table}
Consequently every proper-support line in the table, shifted by
\(|S|-\ell\), is a genuine generic divisor; the only full-support
divisor is \(\sum_js_j=3\).  On the diagonal, the unshifted supports
produce the five positive values
\(1/3,1/4,1/5,1/6,1/7\).  The first shift of each two-node support also
produces the additional candidate \(1/8\); its aggregate cancellation is treated
separately below.

For completeness, the singleton Taylor systems are entirely explicit.
Write the \(j\)-th form as
\(L_j=a_{ji}m_i+B_{ji}(\widehat{\mathbf m_i})\), and put
\(J_i=\{j:a_{ji}>0\}\).  Then
\begin{equation}\label{eq:B3C3-singleton}
\begin{aligned}
 \Res_{H_{\{i\},\ell}}Z_\Phi(\mathbf s)
 ={}&(-1)^\ell
 \sum_{\substack{\mathbf k\in\Nzero^{J_i}\\|\mathbf k|=\ell}}
 \prod_{j\in J_i}
 \frac{(s_j)_{k_j}}{k_j!}\,a_{ji}^{-s_j-k_j}\\
 &\times
 \sum_{\widehat{\mathbf m_i}\in\N^2}
 \prod_{j\notin J_i}L_j(\widehat{\mathbf m_i})^{-s_j}
 \prod_{j\in J_i}B_{ji}(\widehat{\mathbf m_i})^{k_j}.
\end{aligned}
\end{equation}
Terms with \(B_{ji}=0\) and \(k_j>0\) vanish.  The coefficient lists
\((a_{ji})_{j\in J_i}\) are
\begin{equation}\label{eq:B3C3-singleton-coeffs}
\begin{array}{c|ccc}
& i=1&i=2&i=3\\ \hline
B_3&(1,1,1,2,1)&(1,1,1,1,2,2,2)&(1,1,1,1,1,1)\\
C_3&(1,1,1,1,1)&(1,1,1,1,1,1,2)&(1,1,1,2,2,2).
\end{array}
\end{equation}
The pair and full-support residue functions are the corresponding
one-dimensional and two-dimensional projective-period instances of
Theorem~\ref{thm:residue}.  Thus this section gives a complete
carrier-level specialization of the generic theory; where no closed
gamma reduction is known here, the residue remains an explicit
projective integral rather than an unevaluated formal symbol.

\begin{proposition}[Exact pair-face periods at \(s=1/4\)]
\label{prop:B3C3-pair-periods}
Let \(u\in(0,1)\) be the first coordinate on the two-node simplex.
The six pair-face polynomials obtained directly from
\eqref{eq:B3poly}--\eqref{eq:C3poly} are
\[
\begin{array}{c|ccc}
 &12&13&23\\
\hline
B_3&
4u(1-u)^3(2-u)&
u^2(1-u)^3(1+u)&
u^2(1-u)(1+u)^3\\[1mm]
C_3&
u(1-u)^3(2-u)&
2u^2(1-u)^3(2-u)^2&
2u^2(1-u)(2-u)^2.
\end{array}
\]
Writing
\[
 I^\Phi_{ij}=\int_0^1Q^\Phi_{ij}(u)^{-1/4}\,\dd u,
\]
Euler's integral for the Gauss function gives
\begin{align}
 I^{B_3}_{12}
 &=2^{-3/4}B\!\left(\frac34,\frac14\right)
 {}_2F_1\!\left(\frac34,\frac14;1;\frac12\right),\label{eq:B3I12}\\
 I^{B_3}_{13}
 &=B\!\left(\frac12,\frac14\right)
 {}_2F_1\!\left(\frac12,\frac14;\frac34;-1\right),\label{eq:B3I13}\\
 I^{B_3}_{23}
 &=B\!\left(\frac12,\frac34\right)
 {}_2F_1\!\left(\frac12,\frac34;\frac54;-1\right),\label{eq:B3I23}\\
 I^{C_3}_{12}
 &=2^{-1/4}B\!\left(\frac34,\frac14\right)
 {}_2F_1\!\left(\frac34,\frac14;1;\frac12\right),\label{eq:C3I12}\\
 I^{C_3}_{13}
 &=2^{-3/4}B\!\left(\frac12,\frac14\right)
 {}_2F_1\!\left(\frac12,\frac12;\frac34;\frac12\right),\label{eq:C3I13}\\
 I^{C_3}_{23}
 &=2^{-3/4}B\!\left(\frac12,\frac34\right)
 {}_2F_1\!\left(\frac12,\frac12;\frac54;\frac12\right).\label{eq:C3I23}
\end{align}
Hence the internally derived pair contribution is
\[
 \Res_{s=1/4}\xi_\Phi(s)
 =\frac{\zeta(1/4)}8
 \left(I^\Phi_{12}+I^\Phi_{13}+I^\Phi_{23}\right).
\]
Au's closed evaluations are equivalently the two exact sum identities
\begin{align}
 \sum_{ij\in\{12,13,23\}}I^{B_3}_{ij}
 &=
 \frac{2^{1/4}+\csc(\pi/8)}{4\sqrt\pi}\,
 \Gamma\!\left(\frac18\right)\Gamma\!\left(\frac38\right),
 \label{eq:B3-period-sum}\\
 \sum_{ij\in\{12,13,23\}}I^{C_3}_{ij}
 &=
 \frac{2+\csc(\pi/8)}{4\,2^{1/4}\sqrt\pi}\,
 \Gamma\!\left(\frac18\right)\Gamma\!\left(\frac38\right).
 \label{eq:C3-period-sum}
\end{align}
\end{proposition}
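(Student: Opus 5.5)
The plan is to specialize Theorem~\ref{thm:residue} at \(\ell=0\) to the three two-node supports and pull back to the diagonal with Proposition~\ref{prop:thin}/Theorem~\ref{thm:aggregate-order}. The first step is to check which carriers pass through \(s_j\equiv 1/4\). By Table~\ref{tab:B3C3-carrier}:
\begin{itemize}
\item a two-node support gives \(8s=2-\ell\), which holds at \(s=1/4\) only for \(\ell=0\);
\item the singletons give \(5s,7s,6s=1-\ell\), i.e.\ \(5/4,7/4,3/2\), none an integer \(\le 1\);
\item the full support gives \(9/4\), which is not the value \(3\) of the only full divisor.
\end{itemize}
So exactly the three unshifted pair hyperplanes \(H_{\{1,2\},0},H_{\{1,3\},0},H_{\{2,3\},0}\) are incident. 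Any two of these supports are incomparable. By Proposition~\ref{prop:maximal-incidence}, no strict flag of length \(\ge2\) is incident. Hence in \eqref{eq:aggregate-Am} with \(\gamma(t)=(1/4+t,\dots,1/4+t)\) and \(c_w=8\), only \(m=1\) occurs, and
\[
\mathcal A_1=\tfrac18\sum_{S}C_{S,0}(\mathbf s_0).
\]
Each \(C_{S,0}\) is the residue \eqref{eq:unshifted}, evaluated as a germ at the diagonal point. This uses that the diagonal point avoids the individual factor poles, which we verify below.

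Next, for \(S=\{i,j\}\) with complementary node \(k\), \eqref{eq:unshifted} gives \(\cP_{S,\mathbf 0}\cdot Z_{\Phi_{\{k\}}}\). The complementary subsystem is \(A_1\) with the single coroot \(\beta_k\), so the second factor is \(\zeta(s_k)=\zeta(1/4)\). This is finite since \(1/4\neq1\).

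For the period, set \(m_k=0\), \(m_i=u\), \(m_j=1-u\) in the eight forms of \(R(S)\), i.e.\ all forms except \(m_k\). Their product is \(Q^\Phi_{ij}(u)\); at \(s=1/4\) the period is \(\int_0^1 Q^\Phi_{ij}(u)^{-1/4}\,\dd u\). We compute the six polynomials directly. For example, for \(B_3\) and \(S=\{1,2\}\), the forms \(m_1,m_2,m_1+m_2,m_2+m_3,m_1+m_2+m_3,2m_2+m_3,2m_1+2m_2+m_3,m_1+2m_2+m_3\) give
\[
u(1-u)\cdot1\cdot(1-u)\cdot1\cdot2(1-u)\cdot2\cdot(2-u)=4u(1-u)^3(2-u).
\]
The other five are analogous. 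Every exponent of \(u\) and \(1-u\) in \(Q^{-1/4}\) exceeds \(-1\), and the remaining factors \((2-u)\), \((1+u)\) are positive on \([0,1]\). Hence each period is an ordinary convergent integral, so no finite part is needed and the pullback has no extra factor poles.

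Each integral then takes the form \(c\int_0^1u^{a-1}(1-u)^{b-1}(1-zu)^{-\kappa}\dd u\): pull out \(2^{-1/4}\) from \(2-u\) (so \(z=1/2\)), or use \(1+u\) (so \(z=-1\)). Euler's integral \(B(a,b)\,{}_2F_1(\kappa,a;a+b;z)\) then gives \eqref{eq:B3I12}--\eqref{eq:C3I23}. For instance,
\[
I^{B_3}_{12}=4^{-1/4}2^{-1/4}B(3/4,1/4)\,{}_2F_1(1/4,3/4;1;1/2).
\]
Some entries may need the variable swap \(u\mapsto1-u\) to match the displayed parameter order. This bookkeeping of constants and the choice of orientation per entry is routine but error-prone; I would recheck each entry numerically. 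Summing gives \(\Res_{s=1/4}\xi_\Phi=\tfrac{\zeta(1/4)}8\sum I^\Phi_{ij}\).

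The last clause, \eqref{eq:B3-period-sum}--\eqref{eq:C3-period-sum}, is only a restatement. Au \cite{Au2025} gives \(\Res_{s=1/4}\xi_\Phi\) in closed form in the same nine-form normalization \eqref{eq:B3poly}--\eqref{eq:C3poly}. Equating his value with ours and cancelling the nonzero factor \(\zeta(1/4)/8\) yields the two identities.

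The main obstacle is confirming that Au's normalization truly coincides with ours, with no Weyl factor raised to \(s\) and the same \(B/C\) orientation. I would fix the dictionary through \eqref{eq:B3poly}--\eqref{eq:C3poly}, which are taken from Au's Section~9. As an independent check, I would compare high-precision numerics of \(\sum I^\Phi_{ij}\) with the displayed gamma expressions, rather than attempt to prove the hypergeometric closed forms from scratch.
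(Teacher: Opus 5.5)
Your proposal is correct and follows the paper's proof. Both arguments restrict the nine forms to each two-node face to obtain the six polynomials, apply Euler's integral, take the factor $1/8$ as the diagonal derivative of the pair carrier, and obtain the last two identities from Au's evaluations after dividing by $\zeta(1/4)/8$. Your explicit incidence check adds what the paper leaves implicit: only the three unshifted, pairwise incomparable pair carriers pass through $s=1/4$, and the six periods converge there.
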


\begin{proof}
Restrict each product in \eqref{eq:B3poly}--\eqref{eq:C3poly} to the
relevant pair of variables and then impose \(u+(1-u)=1\).  This gives
the six displayed polynomials, including every coefficient \(2\).
Each integral is then an instance of
\[
 \int_0^1u^{A-1}(1-u)^{C-A-1}(1-zu)^{-B}\,\dd u
 =B(A,C-A)\,{}_2F_1(A,B;C;z).
\]
The residue factor \(1/8\) is the transverse derivative of the pair
carrier.  The final two sum identities are Au's independent
evaluations of the same residue, rewritten after multiplication by
\(8/\zeta(1/4)\) \cite[Theorems~9.4 and~9.6]{Au2025}.
\end{proof}

\subsection{Positive diagonal residues}

For a two-node support \(S=\{i,j\}\), define the diagonal projective
period
\begin{equation}\label{eq:B3C3-pairperiod}
 \cP_{\Phi,S}(q)
 =\int_0^1
 \prod_{k:\,\supp L_k\cap S\ne\varnothing}
 \bigl(b_{ki}u+b_{kj}(1-u)\bigr)^{-q}\,\dd u.
\end{equation}
The all-type face formula gives
\begin{equation}\label{eq:B3C3-quarter-period}
 \Res_{s=1/4}\xi_\Phi(s)
 =\frac{\zeta(1/4)}8
 \sum_{|S|=2}\cP_{\Phi,S}(1/4).
\end{equation}
Proposition~\ref{prop:B3C3-pair-periods} derives the six pair periods
exactly as Euler hypergeometric integrals, while Au independently
evaluates their sums in closed gamma form.  The resulting positive
residues are listed in Table~\ref{tab:B3C3-positive}.
\begin{table}[ht]
\centering
\small
\begin{tabularx}{\textwidth}{@{}c X X@{}}
\toprule
pole & \(B_3\) residue & \(C_3\) residue\\
\midrule
\(1/3\)&\multicolumn{2}{c}{\(\Gamma(1/3)^6/(96\pi^2)\)}\\
\(1/4\)&
\(\dfrac{2^{1/4}+\csc(\pi/8)}{32\sqrt\pi}
 \Gamma(1/8)\Gamma(3/8)\zeta(1/4)\)&
\(\dfrac{2+\csc(\pi/8)}{32\,2^{1/4}\sqrt\pi}
 \Gamma(1/8)\Gamma(3/8)\zeta(1/4)\)\\
\(1/5\)&\(\dfrac{2^{-1/5}}5\xi_{B_2}(1/5)\)&
\(\dfrac15\xi_{B_2}(1/5)\)\\
\(1/6\)&\(\dfrac16\xi_{A_2}(1/6)\)&
\(\dfrac1{6\sqrt2}\xi_{A_2}(1/6)\)\\
\(1/7\)&\(\dfrac{2^{-3/7}}7\zeta(1/7)^2\)&
\(\dfrac{2^{-1/7}}7\zeta(1/7)^2\)\\
\bottomrule
\end{tabularx}
\caption{The five positive residues.  The carrier and lower-rank factors
are derived here; Au's independent Theorems~9.4 and~9.6 supply the closed
gamma evaluations used as independent evaluations.}
\label{tab:B3C3-positive}
\end{table}
The powers of two at \(1/5,1/6,1/7\) are the products in
\eqref{eq:B3C3-singleton-coeffs} raised to the negative critical
exponent.  The closed gamma forms at \(1/4\) are those of
Au \cite[Theorems~9.4 and~9.6]{Au2025}.

The same formulas agree with the existing
\(A_3\) evaluation.  At \(s=2/5\),
\begin{equation}\label{eq:A3-Au-identity}
 \frac15\left[2B\!\left(\frac35,\frac15\right)
 +B\!\left(\frac15,\frac15\right)\right]
 =\frac{(\sqrt5+5)\Gamma(1/5)\Gamma(3/5)}
 {10\Gamma(4/5)}.
\end{equation}
Together with the identical \(1/2,1/3,1/4\) formulas, this matches
Au's Theorem~9.2.

\subsection{The shifted pair candidate at \texorpdfstring{\(1/8\)}{1/8}}
\label{subsec:B3C3-one-eighth}

The generic carrier theorem also produces a positive shifted candidate
that is absent from the final single-variable pole tables.  At
\(q=1/8\), the only incident diagonal carriers are
\[
 H_{S,1},\qquad |S|=2,
\]
for the three two-node supports.  No singleton or full-support carrier
passes through this point, and no strict support flag has length two
there.  The incidence-complete specialization formula therefore reduces
to the simple aggregate
\begin{equation}\label{eq:B3C3-one-eighth}
 [(s-\tfrac18)^{-1}]\,\xi_\Phi(s)
 =\frac18\sum_{|S|=2}
   \cR_{S,1}\!\left(\tfrac18\bfone\right),
 \qquad \Phi=B_3,C_3.
\end{equation}
Au proves that \(\xi_{B_3}\) and \(\xi_{C_3}\) are holomorphic at
\(1/8\) \cite[Theorems~9.4 and~9.6]{Au2025}.  Combined with
\eqref{eq:B3C3-one-eighth}, his result is the external exact identity forcing
both aggregate sums to vanish.  Carrier geometry alone predicts the three generic
divisors but not their diagonal cancellation; this is precisely the
distinction between generic genuineness and aggregate thinning.

\subsection{The negative-half-integer double-pole mechanism}

We first isolate the specialization zero which removes the formal
integer ladders.

\begin{lemma}[Pochhammer vanishing at nonpositive diagonal points]
\label{lem:pochhammer-vanishing}
Let \(S\) be a proper support and specialize every root exponent to
\(s=-k\), with \(k\in\Nzero\).  For a summand in the residue formula whose reduced
projective and complementary factors are holomorphic at the
specialization, nonvanishing requires
\[
 \ell\le |C(S)|k.
\]
In particular, the direct singleton term at its diagonal carrier
level \(\ell=1+N(S)k\) vanishes.
\end{lemma}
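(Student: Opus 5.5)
The proof reads the summands of \eqref{eq:residue-general} directly. Fix a proper support \(S\), a level \(\ell\), and a multiindex \(\mathbf k\in\Nzero^{C(S)}\) with \(|\mathbf k|=\ell\). The summand \(T_{S,\mathbf k}\) is the product of three factors: the Pochhammer factor \(\prod_{\alpha\in C(S)}(s_\alpha)_{k_\alpha}/k_\alpha!\), the reduced period \(\cP_{S,\mathbf k}\), and the weighted complementary series \(\cW_{S,\mathbf k}\). By hypothesis the last two are holomorphic at the diagonal point \(s_\alpha=-k\). So the summand can be nonzero there only if the Pochhammer factor is nonzero at that point. In this regime no \(0\cdot\infty\) ambiguity arises, which is exactly why the hypothesis on the reduced factors is imposed.

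The key observation is elementary. For \(k\in\Nzero\), the value \((-k)_{n}=(-k)(-k+1)\cdots(-k+n-1)\) vanishes as soon as \(n\ge k+1\), because the factor \(0\) then appears. Hence nonvanishing forces \(k_\alpha\le k\) for every \(\alpha\in C(S)\). Summing over the crossing roots gives
\[
 \ell=|\mathbf k|=\sum_{\alpha\in C(S)}k_\alpha\le |C(S)|\,k,
\]
which is the claimed bound. If some summand had \(\ell>|C(S)|k\), pigeonhole would give a crossing root with \(k_\alpha\ge k+1\), and its Pochhammer factor would vanish.

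For the second assertion, the diagonal point \(s=-k\) lies on \(H_{S,\ell}\) exactly when \(N(S)(-k)=|S|-\ell\), that is, when \(\ell=|S|+N(S)k\). For a singleton \(|S|=1\) this is \(\ell=1+N(S)k\). Since \(C(S)\subseteq R(S)\), and the simple coroot \(\beta_i\in R(S)\setminus C(S)\), we have \(|C(S)|\le N(S)-1\). Therefore
\[
 |C(S)|k\le (N(S)-1)k<1+N(S)k=\ell .
\]
The bound fails for every summand, so every direct term vanishes.

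There is one point needing care, and it is the only real obstacle. The claim concerns summands whose reduced factors are holomorphic, so one must make sure the Pochhammer zero is not being paired with a pole of \(\cP_{S,\mathbf k}\) or \(\cW_{S,\mathbf k}\) at the specialization. The lemma states this as a hypothesis, so for the direct singleton term I would only check that the statement is read termwise. Any compensating poles belong to other incident flags and are handled by the aggregate coefficients of Theorem~\ref{thm:aggregate-order}, not by this lemma.
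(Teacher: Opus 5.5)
Your proposal is correct and follows the paper's proof essentially verbatim: \((-k)_{k_\alpha}\) vanishes once \(k_\alpha>k\), so summing over \(C(S)\) gives \(\ell\le|C(S)|k\), and for a singleton \(1+N(S)k>(N(S)-1)k\). The only difference is that you use \(|C(S)|\le N(S)-1\), whereas the paper states equality because \(\beta_i\) is the unique internal root; the inequality is all the argument needs.
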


\begin{proof}
Every Taylor summand contains
\(\prod_{\alpha\in C(S)}(-k)_{k_\alpha}\).  The rising factorial
\((-k)_a\) is zero for \(a>k\); hence a nonzero composition satisfies
\(\sum_\alpha k_\alpha\le |C(S)|k\).  For a singleton, exactly one of
the \(N(S)\) meeting roots is internal, so
\(|C(S)|=N(S)-1\), whereas
\[
 1+N(S)k-(N(S)-1)k=1+k>0.
\]
The holomorphy hypothesis prevents a zero from being repaired by a pole
of a nested link.  Without that hypothesis the statement is only a
zero-order count and does not determine the full flag coefficient.
\end{proof}

\begin{lemma}[Classification of nested diagonal coincidences]
\label{lem:B3C3-collisions}
For the support counts in Table~\ref{tab:B3C3-carrier}, every strict
nested diagonal coincidence is one of the singleton--pair families
\[
\begin{array}{c|c}
N_i & (\ell_i,\ell_{ij},q)\\
\hline
5 & (1+5t,\ 2+8t,\ -t)\\
7 & (1+7t,\ 2+8t,\ -t)\\
6 & (1+3t,\ 2+4t,\ -t/2)
\end{array}
\qquad(t\in\Nzero).
\]
There is no singleton--full or pair--full coincidence, and distinct
pairs are incomparable.  At an integer coincidence the Pochhammer
product of the direct singleton term vanishes; this kills the direct
term whenever its reduced factors are regular, but a nested pole may in
principle repair that zero.  Au's theorem that every non-half-integer
pole is simple supplies the global theorem excluding such an assembled
order-two contribution.  Hence the only locations at which a
length-two flag aggregate can survive at order two are
\begin{equation}\label{eq:B3C3-negative-flags}
 (\{3\},6n+4)\subset(\{1,3\},8n+6),\qquad
 (\{3\},6n+4)\subset(\{2,3\},8n+6),
\end{equation}
at \(q_n=-(2n+1)/2\), \(n\ge0\).
\end{lemma}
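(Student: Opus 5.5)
The plan is to reduce everything to elementary arithmetic on the diagonal carrier equations. After that, the analytic input is confined to Lemma~\ref{lem:pochhammer-vanishing} and to Au's simplicity theorem. On the diagonal \(s_\alpha=q\), the carrier \(H_{S,\ell}\) becomes
\[
 N(S)\,q=|S|-\ell .
\]
By Theorem~\ref{thm:residue} only \(\ell=0\) occurs for \(S=I\), so the full carrier sits at \(q=3/9=1/3\). By Proposition~\ref{prop:maximal-incidence} the length-two strict flags are of three kinds: singleton\(\subset\)pair, singleton\(\subset\)full, and pair\(\subset\)full. Distinct pairs are incomparable, so they never form a flag. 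I will treat the three kinds in turn using the counts in Table~\ref{tab:B3C3-carrier}.

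First, the coincidences with the full support. At \(q=1/3\) a singleton would need \(1-\ell=N_i/3\in\{5/3,2,7/3\}\). The two non-integral values are impossible, and the value \(2\) would force \(\ell=-1\). A pair would need \(2-\ell=8/3\), which is impossible. So nothing meets \(H_{I,0}\).

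Second, the singleton\(\subset\)pair coincidences. Every singleton is contained in exactly two pairs, and every pair has \(N=8\). The coincidence condition reads
\[
 q=\frac{1-\ell_i}{N_i}=\frac{2-\ell_{ij}}{8}.
\]
Hence \(8(1-\ell_i)/N_i\) must be an integer. For \(N_i=5\) and \(N_i=7\) this forces \(\ell_i=1+N_it\), giving \(q=-t\) and \(\ell_{ij}=2+8t\). For \(N_i=6\) it forces \(3\mid 1-\ell_i\), giving \(\ell_i=1+3t\), \(q=-t/2\) and \(\ell_{ij}=2+4t\). The positive case \(\ell_i=0\) is excluded automatically, since \(8/N_i\notin\mathbb Z\) for these three values. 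This reproduces the displayed table, and the first two paragraphs together give the full classification.

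Third, the order-two claim. Split according to whether \(q\) is an integer. At \(q=-t\in\mathbb Z_{\le0}\), the singleton level is exactly \(\ell_i=1+N(\{i\})t\); the case \(N_i=6\) with \(t\) even gives the same relation with \(k=t/2\). Lemma~\ref{lem:pochhammer-vanishing} then kills the direct singleton term whenever its reduced factors are regular. The step I expect to be the real obstacle is the possibility that a nested pole of a projective link repairs this zero. Lemma~\ref{lem:pochhammer-vanishing} deliberately does not exclude that, so I will not try to prove it internally. Instead I will invoke Au's theorem that every pole of \(\xi_{B_3},\xi_{C_3}\) away from half-integers is simple. Combined with Theorem~\ref{thm:aggregate-order}, this forces the aggregate \(\mathcal A_2\) on the diagonal line to vanish at such points, so no length-two aggregate survives at order two there.

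This leaves only the non-integral coincidences, namely \(N_i=6\) (the node \(i=3\)) with \(t=2n+1\) odd. These give \(q_n=-(2n+1)/2\), \(\ell_3=6n+4\) and \(\ell_{ij}=8n+6\), with the two pairs containing node \(3\), namely \(\{1,3\}\) and \(\{2,3\}\). This is exactly \eqref{eq:B3C3-negative-flags}. Here the Pochhammer factors \((q)_{k}\) are nonzero because \(q\notin\mathbb Z\), so no vanishing argument applies, and these flags must be retained as the only possible order-two locations.
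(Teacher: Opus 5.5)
Your proposal is correct and follows essentially the same route as the paper. You use the same ingredients in the same order: the diagonal Diophantine solution of \((1-\ell_i)/N_i=(2-\ell_{ij})/8\), where your divisibility condition \(N_i\mid 8(1-\ell_i)\) spells out the paper's ``solving the linear Diophantine equations''; the exclusion of any coincidence with \(H_{I,0}\) at \(q=1/3\); and Lemma~\ref{lem:pochhammer-vanishing} together with Au's simplicity theorem at the integer coincidences, which leaves the odd-\(t\) cases of the \(N_3=6\) row, giving \(q_n=-(2n+1)/2\), \(\ell_3=6n+4\), \(\ell_{3j}=8n+6\).
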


\begin{proof}
For a singleton of root count \(N_i\) contained in a pair, equality of
the diagonal carriers is
\[
 \frac{1-\ell_i}{N_i}=\frac{2-\ell_{ij}}8.
\]
Solving the three linear Diophantine equations for
\(N_i=5,7,6\) gives exactly the displayed parameterizations; nonnegative
levels force \(t\ge0\).  The full-support carrier is unshifted and equals
\(1/3\).  A singleton--full equality would require
\(\ell_i=1-N_i/3\), which is either nonintegral or negative, while a
pair--full equality would require \(\ell_{ij}=-2/3\).  Hence neither
occurs.  Two distinct pairs are incomparable in the support poset and
therefore do not form a strict flag.

The first two rows and the even values of \(t\) in the third row lie
at nonpositive integers.  Lemma~\ref{lem:pochhammer-vanishing} identifies
the vanishing Pochhammer product in the direct singleton term when
the reduced factors are regular; if they are singular, the lemma alone
does not determine the assembled coefficient.  Au's
Theorems~9.4 and~9.6 supply the required global statement that no
double pole survives at these integer coincidences.  Writing
\(t=2n+1\) yields \(q_n\), \(\ell_3=6n+4\), and
\(\ell_{3j}=8n+6\).  Node three is contained in exactly the two pairs
\(\{1,3\}\) and \(\{2,3\}\), proving the final assertion.
\end{proof}

Consequently the aggregate order theorem proves
\begin{equation}\label{eq:B3C3-negative-order}
 \operatorname{pord}_{s=q_n}\xi_{B_3},\ 
 \operatorname{pord}_{s=q_n}\xi_{C_3}\le2,
\end{equation}
and every other diagonal point has order at most one.  The carrier
calculus supplies the exhaustive coincidence list and the conditional
direct Pochhammer zeros; Au's independent global theorem is the exact
theorem that rules out an assembled order-two contribution at the integer
coincidences.  Together these give the ``possibly double'' clause of his
Theorems~9.4 and~9.6.

Let \(C^{\Phi}_{13,n}\) and \(C^{\Phi}_{23,n}\) be the canonically
reduced two-flag coefficients of \eqref{eq:B3C3-negative-flags}.  Since
the diagonal derivatives of the two carrier forms are \(6\) and \(8\),
\begin{equation}\label{eq:B3C3-negative-coeff}
 [(s-q_n)^{-2}]\xi_\Phi(s)
 =\frac{C^{\Phi}_{13,n}+C^{\Phi}_{23,n}}{48}.
\end{equation}
Au evaluates the \(B_3\), \(n=0\) aggregate and proves
\begin{equation}\label{eq:B3-minus-half}
 [(s+\tfrac12)^{-2}]\xi_{B_3}(s)
 =\frac{85}{196608}\,\zeta\!\left(-\frac{13}{2}\right),
\end{equation}
so our flag normalization gives
\[
 C^{B_3}_{13,0}+C^{B_3}_{23,0}
 =\frac{85}{4096}\zeta\!\left(-\frac{13}{2}\right).
\]
This proves a genuine \(B_3\) double pole at \(-1/2\).  Au's current
paper does \emph{not} prove nonvanishing of every member of the
\(B_3\) ladder or of the \(C_3\) ladder.  Those questions remain the
explicit obstruction to upgrading \eqref{eq:B3C3-negative-order} from
an order bound to a family theorem.

\section{Diagonal specialization}

On the diagonal \(s_\alpha=s\),
\[
 \Lambda_{S,\ell}(s\bfone)
 =N(S)\left(s-\frac{|S|-\ell}{N(S)}\right).
\]
For a single transverse divisor, the diagonal residue contribution is
\[
 \frac1{N(S)}
 \cR_{S,\ell}\!\left(\frac{|S|-\ell}{N(S)}\bfone\right).
\]
When several divisors meet the diagonal, one must use
Theorem~\ref{thm:aggregate-order}: generic divisor survival does not
imply survival of the aggregate diagonal coefficient.

The diagonal can have additional cancellations coming from symmetries of
the central-lattice model.  Relating those block symmetries to the
standard-support coefficients \(C_{\mathfrak F}\) requires a separate
comparison theorem and is not needed here.  We therefore determine all
diagonal Laurent coefficients directly from the incidence-complete
formula below.

\subsection{Diagonal Laurent coefficients}

Let \(q\) be a diagonal carrier and let \(\mathfrak F\) range over all
incident contributing flags with
\(\Lambda_w(s\bfone)=N(w)(s-q)\).  The coefficient of
\((s-q)^{-m}\) is
\begin{equation}\label{eq:diagonal-aggregate}
 [(s-q)^{-m}]Z_\Phi(s\bfone)
 =\sum_{|\mathfrak F|\ge m}
 \frac{
  \left.\dfrac{d^{|\mathfrak F|-m}}{ds^{|\mathfrak F|-m}}
  C_{\mathfrak F}(s\bfone)\right|_{s=q}}
 {(|\mathfrak F|-m)!\prod_{w\in\mathfrak F}N(w)}.
\end{equation}
The pole order is the largest \(m\) for which the complete sum in
\eqref{eq:diagonal-aggregate} is nonzero.  Formula
\eqref{eq:diagonal-aggregate}, not the largest individual flag order,
is the diagonal dictionary.

Selected diagonal specializations are listed in Table~\ref{tab:diagonal-certificate}:
\begin{table}[ht]
\centering
\small
\begin{tabularx}{\textwidth}{@{}cclX@{}}
\toprule
Type & pole & order & diagonal principal coefficient\\
\midrule
\(A_2\)&\(2/3\)&1&\(\Gamma(1/3)^3/(2\sqrt3\,\pi)\)\\
\(A_2\)&\(1/2\)&1&\(\zeta(1/2)\)\\
\(A_3\)&\(1/2\)&1&\(\Gamma(1/4)^4/(24\pi)\)\\
\(A_3\)&\(2/5\)&1&
\(\frac15[2B(3/5,1/5)+B(1/5,1/5)]\zeta(2/5)\)\\
\(A_3\)&\(1/3\)&1&\(\frac23\xi_{A_2}(1/3)\)\\
\(A_3\)&\(1/4\)&1&\(\frac14\zeta(1/4)^2\)\\
\(C_2\)&\(1/3\)&1&
\(\frac13(1+2^{-1/3})\zeta(1/3)\)\\
\(G_2\)&\(1/5\)&1&
\(\frac15(18^{-1/5}+2^{-1/5})\zeta(1/5)\)\\
\(A_4\)&\(1/4\)&2&\(\zeta(1/4)^2/16\)\\
\bottomrule
\end{tabularx}
\caption{Representative diagonal coefficients.}
\label{tab:diagonal-certificate}
\end{table}

\begin{table}[ht]
\centering
\small
\begin{tabular}{@{}cccl@{}}
\toprule
Type & Coxeter \(h\) & leading point \(2/h\) & closed gamma form\\
\midrule
\(A_2\)&3&\(2/3\)&\(\Gamma(1/3)^3/(2\sqrt3\,\pi)\)\\
\(B_2=C_2\)&4&\(1/2\)&\(\Gamma(1/4)^2/(8\sqrt{2\pi})\)\\
\(G_2\)&6&\(1/3\)&\(\Gamma(1/3)^3/(2^{8/3}3^{3/2}\pi)\)\\
\(A_3\)&4&\(1/2\)&\(\Gamma(1/4)^4/(24\pi)\)\\
\(B_3,C_3\)&6&\(1/3\)&\(\Gamma(1/3)^6/(96\pi^2)\)\\
\(A_4\)&5&\(2/5\)& universal leading-residue formula\\
\bottomrule
\end{tabular}
\caption{Leading diagonal residues used in the low-rank comparisons.  The first
five gamma forms are independently present in Au's calculations; the
all-type equality is also supplied by the universal leading-residue
theorem.}
\label{tab:Au-leading-comparisons}
\end{table}

\section{Concluding remarks}

The carrier arrangement records the possible singular geometry, but it
does not determine the order after specialization.  The decisive
object is the aggregate Laurent coefficient of the incident strict
flags.  This distinction accounts simultaneously for Pochhammer
zeros, disconnected gamma factors, and cancellation among several
carrier components.

Several questions remain.  The general flag formula does not by
itself classify the nonzero coefficients at every intersection.
Likewise, the negative-half-integer \(B_3\) and \(C_3\) ladders are
known to have order at most two, but only the \(B_3\) coefficient at
\(-1/2\) has presently been evaluated as nonzero.  It would also be
useful to extend the explicit low-rank analysis to the remaining
exceptional types and to other KMT variants.

The results concern the untwisted strongly dominant KMT function in
the convention of Section~2.  Weyl-symmetrized, exponentially twisted,
and epsilon-decorated variants require separate bookkeeping.  No
statement is made here about the distribution of nontrivial zeros of
the complementary zeta factors.
\section*{Computational verification and declarations}

Symbolic computation was used during preparation to check finite
Taylor expansions, support counts, simplex Jacobians, gamma-factor
normalizations, and the displayed low-rank identities.  The arguments
in the paper are independent of numerical approximation.

\paragraph{AI-assisted drafting disclosure.}
AI tools were used for mathematical exploration, literature search,
proof checking, and drafting.  The author reviewed the manuscript and
accepts responsibility for its content.

\paragraph{Data availability.}
The \LaTeX\ source is supplied with the manuscript.


\begingroup
\footnotesize
\begin{thebibliography}{99}


\bibitem{Au2025}
K.~C.~Au.
\newblock On single-variable Witten zeta functions of rank two and three.
\newblock arXiv:2412.17196, version~3, revised 14 November 2025.

\bibitem{BudurShiZuo2025}
N.~Budur, Q.~Shi, and H.~Zuo.
\newblock Polar loci of multivariable Archimedean zeta functions.
\newblock arXiv:2504.10051, 2025.

\bibitem{BEL2007}
G.~Bhowmik, D.~Essouabri, and B.~Lichtin.
\newblock Meromorphic continuation of multivariable Euler products.
\newblock {\em Forum Mathematicum}, 19(6):1111--1139, 2007.
\newblock DOI: 10.1515/FORUM.2007.044.

\bibitem{AET2001}
S.~Akiyama, S.~Egami, and Y.~Tanigawa.
\newblock Analytic continuation of multiple zeta-functions and their values at
  non-positive integers.
\newblock {\em Acta Arithmetica}, 98(2):107--116, 2001.

\bibitem{DeConciniProcesi1995}
C.~De~Concini and C.~Procesi.
\newblock Wonderful models of subspace arrangements.
\newblock {\em Selecta Mathematica (N.S.)}, 1(3):459--494, 1995.


\bibitem{FeichtnerKozlov2004}
E.-M.~Feichtner and D.~N.~Kozlov.
\newblock Incidence combinatorics of resolutions.
\newblock {\em Selecta Mathematica (N.S.)}, 10(1):37--60, 2004.

\bibitem{Essouabri1997}
D.~Essouabri.
\newblock Singularit\'e de s\'eries de {Dirichlet} associ\'ees \`a des
  polyn\^omes de plusieurs variables et applications en th\'eorie analytique
  des nombres.
\newblock {\em Annales de l'Institut Fourier}, 47(2):429--483, 1997.

\bibitem{KMT2010PLMS}
Y.~Komori, K.~Matsumoto, and H.~Tsumura.
\newblock On multiple Bernoulli polynomials and multiple {$L$}-functions of
  root systems.
\newblock {\em Proceedings of the London Mathematical Society},
  100(2):303--347, 2010.

\bibitem{KMT2010II}
Y.~Komori, K.~Matsumoto, and H.~Tsumura.
\newblock On Witten multiple zeta-functions associated with semisimple Lie
  algebras {II}.
\newblock {\em Journal of the Mathematical Society of Japan}, 62(2):355--394,
  2010.

\bibitem{KMT2011G2}
Y.~Komori, K.~Matsumoto, and H.~Tsumura.
\newblock On Witten multiple zeta-functions associated with semisimple Lie
  algebras {IV}.
\newblock {\em Glasgow Mathematical Journal}, 53(1):185--206, 2011.


\bibitem{KMTBook2023}
Y.~Komori, K.~Matsumoto, and H.~Tsumura.
\newblock {\em The Theory of Zeta-Functions of Root Systems}.
\newblock Springer Monographs in Mathematics. Springer, Singapore, 2023.
\newblock DOI: 10.1007/978-981-99-0910-0.


\bibitem{GPZ2014}
L.~Guo, S.~Paycha, and B.~Zhang.
\newblock Conical zeta values and their double subdivision relations.
\newblock {\em Advances in Mathematics}, 252:343--381, 2014.
\newblock DOI: 10.1016/j.aim.2013.10.022.

\bibitem{GPZ2020}
L.~Guo, S.~Paycha, and B.~Zhang.
\newblock A conical approach to Laurent expansions for multivariate
 meromorphic germs with linear poles.
\newblock {\em Pacific Journal of Mathematics}, 307(1):159--196, 2020.
\newblock DOI: 10.2140/pjm.2020.307.159.

\bibitem{KMT2012III}
Y.~Komori, K.~Matsumoto, and H.~Tsumura.
\newblock On Witten multiple zeta-functions associated with semisimple Lie
 algebras {III}.
\newblock In D.~Bump, S.~Friedberg, and D.~Goldfeld, editors,
 {\em Multiple Dirichlet Series, L-functions and Automorphic Forms},
 volume 300 of {\em Progress in Mathematics}, pages 223--286.
 Birkh\"auser/Springer, 2012.

\bibitem{Lichtin1988}
B.~Lichtin.
\newblock Generalized Dirichlet series and \(b\)-functions.
\newblock {\em Compositio Mathematica}, 65(1):81--120, 1988.

\bibitem{Lopez2022}
V.~L\'opez.
\newblock Pole structure of Shintani zeta functions and Newton polytopes.
\newblock arXiv:2205.15620, 2022.

\bibitem{MOS2024}
K.~Matsumoto, K.~Onodera, and S.~Sahoo.
\newblock Mordell--Tornheim multiple zeta-functions, their integral analogues,
 and relations among multiple polylogarithms.
\newblock arXiv:2409.19980, 2024.

\bibitem{Li2009}
L.~Li.
\newblock Wonderful compactification of an arrangement of subvarieties.
\newblock {\em Michigan Mathematical Journal}, 58(2):535--563, 2009.

\bibitem{MNOST2008}
K.~Matsumoto, T.~Nakamura, H.~Ochiai, and H.~Tsumura.
\newblock On value-relations, functional relations and singularities of
  Mordell--Tornheim and related triple zeta-functions.
\newblock {\em Acta Arithmetica}, 132(2):99--125, 2008.



\bibitem{Rutard2023}
S.~Rutard.
\newblock Values and derivative values at nonpositive integers of generalized
 multiple Hurwitz zeta functions, applications to Witten zeta functions.
\newblock arXiv:2312.04725, 2023.


\bibitem{Matsumoto2002}
K.~Matsumoto.
\newblock On the analytic continuation of various multiple zeta-functions.
\newblock In M.~A.~Bennett et al., editors, {\em Number Theory for the
 Millennium II}, pages 417--440. A K Peters, 2002.

\bibitem{Onozuka2013}
T.~Onozuka.
\newblock Analytic continuation of multiple zeta-functions and the
 asymptotic behavior at non-positive integers.
\newblock {\em Functiones et Approximatio Commentarii Mathematici},
 49(2):331--348, 2013.
\newblock DOI: 10.7169/facm/2013.49.2.11.

\bibitem{KOOT2022}
S.-y.~Kadota, T.~Okamoto, M.~Ono, and K.~Tasaka.
\newblock On a unified double zeta function of Mordell--Tornheim type.
\newblock {\em Lithuanian Mathematical Journal}, 62(2):207--217, 2022.
\newblock DOI: 10.1007/s10986-022-09556-x.

\bibitem{Zhao2000}
J.~Zhao.
\newblock Analytic continuation of multiple zeta functions.
\newblock {\em Proceedings of the American Mathematical Society},
 128(5):1275--1283, 2000.
\newblock DOI: 10.1090/S0002-9939-99-05398-8.

\bibitem{MatuzasLeading2026}
J.~Matuzas.
\newblock A universal leading-residue formula for {Witten} zeta functions.
\newblock arXiv:2607.12728, 2026.

\bibitem{MatsumotoTsumura2006}
K.~Matsumoto and H.~Tsumura.
\newblock On Witten multiple zeta-functions associated with semisimple Lie
  algebras {I}.
\newblock {\em Annales de l'Institut Fourier}, 56(5):1457--1504, 2006.

\end{thebibliography}
\endgroup
\end{document}